\newtheorem{lemma}{Lemma}[section]
\newtheorem{theorem}{Theorem}[section]
\newtheorem{corollary}{Corollary}[section]
\newtheorem{definition}{Definition}[section]
\newtheorem{remark}{Remark}[section]
\newtheorem{example}{Example}[section]
\numberwithin{equation}{section} \numberwithin{theorem}{section}
\numberwithin{example}{section} \numberwithin{remark}{section}
\numberwithin{figure}{section} \numberwithin{algorithm}{section}
\def\ds{\displaystyle}
\title[Boundary regularity in a Lipschitz domain]{Boundary regularity theory of the singular Lane-Emden-Fowler equation in a Lipschitz domain}
\author{Yahong Guo}
\address{School of Mathematical Sciences, Shanghai Jiao Tong University, Shanghai 200240, China}\email{yhguo@sjtu.edu.cn}
\author{Congming Li}
\address{School of Mathematical Sciences and CMA-Shanghai, Shanghai Jiao Tong University, China}\email{congming.li@sjtu.edu.cn}
\author{Chilin Zhang$^{\ast}$}\thanks{*Corresponding author}
\address{School of Mathematical Sciences, Fudan University, Shanghai 200433, China}\email{zhangchilin@fudan.edu.cn}
\begin{document}

\begin{abstract}
We study the singular Lane-Emden-Fowler equation
\begin{equation*}
    -\Delta u=f(X)\cdot u^{-\gamma}
\end{equation*}
in a bounded Lipschitz domain $\Omega$, with the Dirichlet boundary condition and a positive, bounded function $f(X)$. A distinguishing feature is that the vanishing boundary condition introduces a singularity in the equation.

We focus on the well-posedness of the equation and the growth rate of solutions near the boundary. The key is to classify the limiting cone of a boundary point into three categories based on its "frequency" and the scaling-invariant exponent  of the above equation, and to obtain distinct growth rate estimates for each case.

Additionally, we discuss the boundary Harnack principle for the singular Lane-Emden-Fowler equation, which is essential in deriving the boundary growth rate estimate. To our knowledge, the boundary Harnack principle we derive is the first Kemper-type estimate for singular semi-linear equations. It notably differs from the classical one for linear equations, in particular, the boundedness of the ratio \(u/v\) does not imply its continuity.

To address the lack of a suitable upper barrier, we introduce new techniques, including constructing upper barriers iteratively. We also construct a subharmonic auxiliary function $V(X)$ related to the solution $u$ in the limiting cone. The growth rate of $u(X)$ is then obtained inductively from the growth rate of the auxiliary function $V(X)$. Our results and methods offer novel insights into the behavior of singular elliptic equations in non-smooth domains.
\end{abstract}
\maketitle

\section{Introduction}
\subsection{Background}
In this paper, we investigate the asymptotic boundary behavior of non-negative solutions to the singular Lane-Emden-Fowler equation:
\begin{equation}\label{eq. main -1}
    \left\{\begin{aligned}
        &-\Delta u=f(X)u^{-\gamma}&\mbox{ in }&\Omega,\\
        &u=0&\mbox{ on }&\partial\Omega,
    \end{aligned}
    \right.
\end{equation}
where  $\gamma>0$,  $\Omega$ is  an open Lipschitz domain (either bounded or unbounded), and $f\in C^{\alpha}_{loc}(\Omega)(0<\alpha<1)$ is a positive and bounded function. For brevity, we refer to this equation, which exhibits a singularity when the solution $u$ approaches the zero boundary value,  as the "SLEF"  throughout this paper.

The "SLEF" is a natural generalization of the classical Lane-Emden equation
\begin{equation*}
    -\Delta u=u^{p}\quad(p>0),
\end{equation*}
which models numerous phenomena in mathematical physics and astrophysics. When in particular $p=\frac{n+2}{n-2}$, the problem becomes the Nirenberg problem, describing the conformal mapping of a Euclidean domain to a manifold with constant positive scalar curvature. The boundary regularity of solutions to the classical Lane-Emden equation in a smooth domain has been studied in \cite{CFRS20,CLM20,GT01,GLO26,KS18,QS19,RW94}. For a punctured domain, the asymptotic behavior near the isolated singularity is studied in the pioneering work \cite{CGS89}, see also \cite{HLL21,KMPS99,L96}.

We also highlight the well-known Loewner-Nirenberg problem \cite{LN74}, which, in our view, exhibits significant similarities to the singular Lane-Emden-Fowler equation. This problem involves studying the negative scalar curvature equation
\begin{equation}\label{eq. Loewner-Nirenberg}
    \Delta u=u^{\frac{n+2}{n-2}}\mbox{ in }\Omega,\quad u=\infty\mbox{ on }\partial\Omega.
\end{equation}
where $\Omega\subseteq\mathbb{R}^{n}$ $(n\geq3)$ is a Lipschitz or $C^{1,\alpha}$ domain and $u\geq0$. Higher regularity estimates near the boundary were later established in \cite{ACF92,K05,M91} under better regularity assumptions on $\partial\Omega$. More recently, \cite{HJS24,HS20,J21} derived an asymptotic expansion for solutions to \eqref{eq. Loewner-Nirenberg} in conic domains. Notably, prior to these works, the authors of \cite{HS18} obtained analogous estimates for the negative curvature Liouville equation. The readers may compare our present results with those for the Loewner-Nirenberg problem to identify both parallels and distinctions.

From a physical point of view, equations of the form \eqref{eq. main -1} represent a generalized version of the Lane-Emden-Fowler equation and find applications across multiple scientific domains. For instance, in the study of thermo-conductivity \cite{FuMa} where $u^\gamma$ models material resistivity, in the theory of gaseous dynamics in astrophysics \cite{Fo}, in signal transmission \cite{No}, and in the context of chemical heterogeneous catalysts \cite{Pe}. Among these applications, one of the most physically significant implementations occurs in the study of non-Newtonian pseudo-plastic fluids, where such singular equations model boundary layer phenomena (see e.g.,\cite{AcShpe,Naca, VasoMo} and references therein).

From a theoretical standpoint, following the pioneering existence and uniqueness result presented in \cite{FuMa}, a systematic study of problems such as \eqref{eq. main -1} in a bounded smooth domain $\Omega$ was initiated in \cite{CrRaTa,Stu}. If $f$ is smooth enough and bounded away from zero on $\Omega$, then the existence and uniqueness result of classical solutions $u\in C^{\alpha}(\overline{\Omega})\cap C^2(\Omega)$ with $\alpha=\frac{2}{1+\gamma}$ is established by applying an appropriate sub- and super-solution method, with significant refinements later provided in \cite{LaMc}. Specifically, when $f\in C^\alpha(\bar{\Omega})$ and $f>0$ in $\bar{\Omega}$, the authors first established the estimate for $u$ for $\gamma>1$:
\begin{equation}\label{eq. u is eigenfunction}
    a\phi^{\frac{2}{1+\gamma}}\leq u\leq b \phi^{\frac{2}{1+\gamma}},
\end{equation}
where $a,b>0$, and $\phi$ denotes the first eigenfunction of $-\Delta$ under the Dirichlet boundary condition. Then they showed that the solution $u\in W^{1,2}(\Omega)$ if and only if $\gamma<3$. This result has been extended to a more general datum $f\in L^p(\Omega)(p\geq 1)$ in \cite{OlPe}, where the authors established that the necessary and sufficient condition for the existence of the energy solution is \( \gamma<3-\frac{2}{p} \) (see also \cite{BoOr} for further insights). Later, del Pino \cite{dP} proved the existence of a unique solution $u\in C^{1,\alpha}({\Omega})\cap C(\bar{\Omega})$. When \( f(X) \) satisfies either
\begin{itemize}
    \item[]Case 1: $f(X)$ is merely nonnegative and bounded,
    \item[]Case 2: $f(X)$ behaves asymptotically as $f(X) \sim d(X, \partial\Omega)^\alpha$,
\end{itemize}
Gui and Lin \cite{GuLi} demonstrated that the positive solution is in general H\"older-continuous up to the boundary and  exhibits even higher regularity in certain special cases. For instance, if $\alpha-\gamma\geq 0$ in Case 1, then $u\in C^{1,\beta}(\bar{\Omega})$ for all \( \beta \in (0,1) \). Similarly,  
  if \( \gamma < 1 \) in Case 2, then \( u \in C^{1,1-\gamma}(\bar{\Omega}) \). They obtained such a regularity result by establishing the following sharp estimates for $u$ near the boundary (taking Case 1 as an example):
  \begin{equation}\label{BBS}
      u(X)\sim\left\{\begin{aligned}
          &d(X,\partial\Omega)^{\frac{2}{1+\gamma}},& \ \mbox{if}\ \gamma>1;\\ 
      &d(X,\partial\Omega)|\ln{d(X,\partial\Omega)}|^{\frac{1}{2}} ,& \ \mbox{if}\ \gamma=1;\\
      &d(X,\partial\Omega),& \ \mbox{if}\ \gamma<1.
      \end{aligned}\right.
  \end{equation}
We also recommend the readers to read the survey paper \cite{OP24}, in which Oliva and Petitta gave a systematic illustration of the theory mentioned above.

There are also interesting results in the case where $f(X)=1$. For instance, when $\Omega$ is a half-space, Montoro, Muglia
and Sciunzi recently classified positive solutions to \eqref{eq. main -1} in \cite{MMS24a,MMS24b}. Their work established a uniform asymptotic growth estimate near the boundary $\mathbb{R}^{n-1}$ for $\gamma>1$:
\[c_1x_n^{\frac{2}{1+\gamma}}\leq 
 u(X)\leq c_2x_n^{\frac{2}{1+\gamma}},\] which has been extended to the singular fractional Lane-Emden-Fowler equation in \cite{GuoWu}. When $\Omega$ is a quarter-space in $\mathbb{R}^{2}$ (e.g., $\Omega=\{x_{1},x_{2}\geq0\}$), the boundary regularity and the Krylov-type boundary Harnack principle were derived in \cite{HZ24}. Specifically, for $\gamma\in(0,1)$, any positive solution to $-\Delta u=u^{-\gamma}$ in $\Omega\cap B_{1}$ (vanishing at $\partial\Omega$) satisfies
 \begin{equation}\label{eq. HZ boundary Harnack}
     \frac{u(X)}{\Psi(X)}-1=O(|X|^{\epsilon})\quad\mbox{in }\Omega\cap B_{1/2},\mbox{ for some }\epsilon>0,
 \end{equation}
where $\Psi(X)$ is the homogeneous solution of degree $\frac{2}{1+\gamma}$ to the same equation.
 
However, for general non-smooth domains, such as those with Lipschitz boundaries, to the best of our knowledge, no growth rate results analogous to \eqref{eq. HZ boundary Harnack} near the boundary have been established for the singular problem \eqref{eq. main -1}. This gap motivates the present work, which investigates the boundary behavior of solutions to the singular Lane-Emden-Fowler equation \eqref{eq. main -1}. Specifically, we proceed as follows:
\begin{itemize}
    \item[(1)] Well-posedness: Using the continuity method, we first establish the well-posedness of "SLEF" in bounded Lipschitz domains in the classical sense.
    \item[(2)] Growth rate estimates: We then employ an iteration technique to derive almost sharp growth rate estimates near the boundary (see Theorem~\ref{thm. growth rate} and Theorem~\ref{thm. growth rate, (b) improved}), where the boundary Harnack principle plays a pivotal role.
    \item[(3)] The boundary Harnack principle: Additionally, we extend the boundary Harnack principle from linear equations to the semi-linear singular equation \eqref{eq. main -1} (see Theorem~\ref{thm. Harnack Comparable, general Lipschitz domain}).
\end{itemize}

In the process of deriving the boundary growth rate estimates in a Lipschitz domain, we realize that classical methods for a smooth domain fail. For example, the estimate \eqref{eq. u is eigenfunction} obtained in \cite{LaMc} no longer holds if $\partial\Omega$ has a $\frac{\pi}{2}$ angle, see the estimate \eqref{eq. HZ boundary Harnack}. Besides, if $\Omega$ is not $C^{1,1}$, then there are no interior and exterior tangential balls used in \cite{GuLi} that provide the estimate \eqref{BBS}. In the case of a Lipschitz domain, parallel to \cite{GuLi}, we study interior and exterior cones rather than interior and exterior balls near a boundary point. For each exponent $\gamma$, we classify the cones into three categories based on the "frequency" (the degree of the homogeneous harmonic function in the cone), see Definition~\ref{def. cone classification}. Then we obtain essentially different growth rate estimates in each case.
 
To address the challenges arising from both the singular nonlinear term and the low regularity of the domain, we introduce several novel techniques, including:
\begin{itemize}
    \item[(1)] In proving the growth rate estimates in Theorem~\ref{thm. growth rate}, we leverage the rescaling invariance of the equation to construct a subharmonic function $V(X)$ in a Lipschitz Cone $ Cone_{\Sigma}$   that vanishes on the boundary:
    \begin{equation*}
        V(X)=2^{\frac{2}{1+\gamma}}U(X/2)-U(X),
    \end{equation*}
    which  captures the difference between the solution and its rescaled counterpart. Then by applying the classical  boundary Harnack principle to this auxiliary 
 function $V(X)$ and employing an iterative approach, we are able to derive the growth rate of solutions  near the boundary. 
    \item[(2)] To obtain the optimal growth rate in the critical case (see  Theorem~\ref{thm. growth rate, (b) improved} below), we iteratively construct barrier functions to compensate for the absence of a global, suitable and explicitly defined barrier function within the Lipschitz cone $Cone_{\Sigma}$.
    \item[(3)] In developing the Kemper-type boundary Harnack principle for the "SLEF" equation, we go beyond the classical boundary Harnack principle for linear equations. After establishing the boundedness of the ratio $\frac{u}{v}$ of any two solutions, we  need to further explore its continuity   based on the size of the cone $Cone_{\Sigma}$(see Theorem 1.5-1.7 below). In particular, in the super-critical case ($\gamma$ small) with $C^1$ or convex boundary (see Theorem 1.7 below), we combine the boundary Harnack principle with the Campanato iteration to derive a pointwise Schauder-type estimate(see Theorem 1.8 below), which plays a crucial role for deriving the continuity.
\end{itemize}

\subsection{Definitions and notations}
We introduce the following fundamental definitions and notations that will be used throughout this work.

Let $\Sigma\subseteq\partial B_{1}$ be an open (relative to the topology of $\partial B_{1}$) spherical domain with a Lipschitz boundary. Its associated cone is denoted as
\begin{equation*}
    Cone_{\Sigma}:=\{X=tY:\ t>0,\ Y\in\Sigma\}.
\end{equation*}
Inspired by the Almgren's frequency function, we denote $\lambda_{\Sigma}$ to be the first Dirichlet eigenvalue of  the spherical Laplacian on $\Sigma$, and define $\phi_{\Sigma}>0$  to be the "frequency" of $Cone_{\Sigma}$. Precisely, we have
\begin{equation*}
    \lambda_{\Sigma}:=\min_{v\in H_{0}^{1}(\Sigma)}\frac{\int_{\Sigma}|\nabla v|^{2}d\theta}{\int_{\Sigma}v^{2}d\theta},\quad\phi_{\Sigma}(n+\phi_{\Sigma}-2)=\lambda_{\Sigma}.
\end{equation*}
It is worth mentioning that if $E_{\Sigma}(\theta)\in C_{0}^{\infty}(\Sigma)\cap C(\overline{\Sigma})$ is a normalized minimizer satisfying
\begin{equation*}
    \int_{\Sigma}|\nabla E_{\Sigma}|^{2}d\theta=\lambda_{\Sigma}\int_{\Sigma}E_{\Sigma}^{2}d\theta,\quad\max_{\theta\in\Sigma}E_{\Sigma}(\theta)=1,
\end{equation*}
then the function $H_{\Sigma}$, expressed in polar coordinates as 
\begin{equation}\label{eq. H Sigma, homogeneous harmonic in a cone}
    H_{\Sigma}=H_{\Sigma}(r,\theta)=r^{\phi_{\Sigma}}E_{\Sigma}(\theta)
\end{equation}
is a positive harmonic function supported in $Cone_{\Sigma}$.
\begin{example}\label{ex. half space cone}
    If $Cone_{\Sigma}=\mathbb{R}^{n}_{+}$ is a half space, then $\phi_{\Sigma}=1$ and
    \begin{equation*}
        H_{\Sigma}(X)=x_{n}.
    \end{equation*}
\end{example}
\begin{example}
    If $Cone_{\Sigma}\subseteq\mathbb{R}^{2}$ is an angle of size $\Theta$, then $\phi_{\Sigma}=\frac{\pi}{\Theta}$ and
    \begin{equation*}
        H_{\Sigma}(r,\theta)=r^{\frac{\pi}{\Theta}}\sin{(\frac{\theta}{\Theta}\pi)},\quad r>0,\ \theta\in(0,\Theta).
    \end{equation*}
\end{example}


Assume for simplicity that $u$ satisfies $-\Delta u=u^{-\gamma}$ in $B_{1}\cap Cone_{\Sigma}$ and vanishes on $\partial(B_{1}\cap Cone_{\Sigma})$. By comparing the super-harmonic function $u$ with the homogeneous harmonic function $H_{\Sigma}$ defined in \eqref{eq. H Sigma, homogeneous harmonic in a cone}, we find that $\ds\max_{B_{r}\cap Cone_{\Sigma}}u(x)\gtrsim r^{\phi_{\Sigma}}$. On the other hand, the $\frac{2}{1+\gamma}$ scaling invariance of the equation suggests that $\ds\max_{B_{r}\cap Cone_{\Sigma}}u(x)\gtrsim r^{\frac{2}{1+\gamma}}$. We believe that the competition between $\phi_{\Sigma}$ and $\frac{2}{1+\gamma}$ is the decisive factor governing the boundary behavior of the solution.

Motivated by this philosophy, we classify cones into three types by comparing its frequency with the scaling-invariant exponent $\frac{2}{1+\gamma}$ of the SLEF equation \eqref{eq. main -1}.
\begin{definition}(classification of cones with respect to $\gamma$)\label{def. cone classification}
    Let $\Sigma\subseteq\partial B_{1}$ be an open spherical domain with Lipschitz boundary, and let $\gamma>0$ be a given exponent appearing in \eqref{eq. main -1}. We say the cone $Cone_{\Sigma}$ is
    \begin{itemize}
        \item sub-critical (with respect to $\gamma$) if $\phi_{\Sigma}>\frac{2}{1+\gamma}$;
        \item critical (with respect to $\gamma$) if $\phi_{\Sigma}=\frac{2}{1+\gamma}$;
        \item super-critical (with respect to $\gamma$) if $\phi_{\Sigma}<\frac{2}{1+\gamma}$.
    \end{itemize}
The phrase "with respect to $\gamma$"  may be omitted when there is no risk of confusion.
\end{definition}
\begin{remark}
    If $Cone_{\Sigma}$ is the half space $\mathbb{R}^{n}_{+}$, we have already seen in Example~\ref{ex. half space cone} that $\phi_{\Sigma}=1$. In this case, the classification in Definition~\ref{def. cone classification} becomes the following:
    \begin{itemize}
        \item when $\gamma>1$, then $Cone_{\Sigma}$ is sub-critical (with respect to $\gamma$);
        \item when $\gamma=1$, then $Cone_{\Sigma}$ is critical (with respect to $\gamma$);
        \item when $\gamma<1$, then $Cone_{\Sigma}$ is super-critical (with respect to $\gamma$).
    \end{itemize}
    This corresponds to the trichotomy in \cite{GuLi}, see also \eqref{BBS}.
\end{remark}

A bounded domain $\Omega$ is called a Lipschitz domain, denoted by $\partial\Omega\in C^{0,1}$, if its boundary $\partial\Omega$ can be locally expressed as the  graph $\Gamma$ of  a Lipschitz function $g(x')$ near each boundary point $X\in\partial\Omega$ after a suitable rotation, that is, $$\Gamma:=\{X=(x',x_n)\in \partial \Omega:~ x_{n}=g(x')\}$$ with $[g(x')]_{C^{0,1}}\leq L$ for some $L$ (uniform on $\partial\Omega$). The minimal constant $L=:[\partial\Omega]_{C^{0,1}}$ is called the Lipschitz constant of the domain $\Omega$.


\begin{definition}(the local super-critical interior cone condition)\label{def. super-critical interior cone condition}
    Let $\Omega$ be a Lipschitz domain whose boundary $\Gamma$ passes through the origin. We say $\Omega$ satisfies "the super-critical interior cone condition" near the origin, if the following holds: there exists a super-critical Lipschitz cone $Cone_{\Sigma}$ and a radius $r>0$ such that for all $X\in\Gamma\cap B_{r}$, one has 
\begin{equation}\label{eq. super-critical interior cone condition}
    (X+Cone_{\Sigma})\cap B_{r}(X)\subseteq\Omega.
\end{equation}
\end{definition}
\begin{remark}
    For any $Cone_{\Sigma}$ satisfying \eqref{eq. super-critical interior cone condition} for all $X\in\Gamma\cap B_{r}$, the domain $\Sigma$ must be disjoint from its antipodal set $-\Sigma$. It follows that $|\Sigma| \leq \frac{1}{2}|\partial B_{1}|$,  and hence we have $\phi_{\Sigma}\geq1$ by the Schwartz symmetrization. Furthermore, $\gamma<1$ is a necessary condition of "the local super-critical interior cone condition". In fact, if the cone $Cone_{\Sigma}$ is super-critical (with respect to $\gamma$), i.e., $\frac{2}{1+\gamma}>\phi_{\Sigma}$, then $\gamma<1$.
\end{remark}
We finally introduce several neighborhoods that will be useful for deriving boundary regularity estimates.
\begin{definition}[cylindrical neighborhoods]
    For a boundary point $X=(x',g(x'))\in\Gamma$, we define three types of cylindrical neighborhoods:
    \begin{itemize}
        \item A grounded cylinder $\mathcal{GC}_{r}(X)$ as
        \begin{equation*}
            \mathcal{GC}_{r}(X):=\{Y=(y',y_{n}):\quad|x'-y'|\leq r,\quad0\leq y_{n}-g(y')\leq r\};
        \end{equation*}
        \item A doubled cylinder $\mathcal{DC}_{r}(X)$ as
        \begin{equation*}
            \mathcal{DC}_{r}(X):=\{Y=(y',y_{n}):\quad|x'-y'|\leq r,\quad|y_{n}-g(y')|\leq r\};
        \end{equation*}
        \item A suspended cylinder $\mathcal{SC}_{r,\delta}(X)$ as
        \begin{equation*}
            \mathcal{SC}_{r,\delta}(X):=\{Y=(y',y_{n}):\quad|x'-y'|\leq r,\quad\delta r\leq y_{n}-g(y')\leq r\}.
        \end{equation*}
    \end{itemize}
    The parameter $\delta\leq\frac{1}{10}$ will be specified later. When the reference point $X=0$ and parameter $\delta$ are clear from context, we use the simplified notations:
    \begin{equation*}
        \mathcal{GC}_{r}=\mathcal{GC}_{r}(0),\quad\mathcal{DC}_{r}=\mathcal{DC}_{r}(0),\quad\mathcal{SC}_{r}=\mathcal{SC}_{r,\delta}=\mathcal{SC}_{r,\delta}(0).
    \end{equation*}
\end{definition}

\subsection{Main results}
We present the principal contributions of this work, beginning with the existence and uniqueness theory for the singular problem.

\begin{theorem}[well-posedness]\label{thm. well-posedness}
    Let $\Omega$ be a bounded open Lipschitz domain. 
   Suppose $f(X)$ is a locally H\"older continuous function in $\Omega$ with $0<\lambda\leq f(X)\leq\Lambda$, and  $\varphi\geq0$ is a continuous function  on $\partial\Omega$. Then there exists a unique classical solution $u(X):\Omega\to\mathbb{R}_{+}$ satisfying:
    \begin{equation}\label{eq. Dirichlet}
        \left\{\begin{aligned}
            &-\Delta u(X)=f(X)\cdot u(X)^{-\gamma}&\mbox{ in }&\Omega\\
            &u(Y)=\varphi(Y)&\mbox{ on }&\partial\Omega
        \end{aligned}\right..
    \end{equation}
\end{theorem}
\begin{remark}
    Theorem~\ref{thm. well-posedness} remains valid under the weaker assumption that $f(X)$  belongs to the class $C^{Dini}_{loc}(\Omega)$.
\end{remark}

While interior regularity follows from the standard elliptic theory, the boundary behavior presents distinctive challenges due to the singular nature of the problem. The growth of the solution near vanishing boundary data depends on the geometry of the domain,  in particular on the "sharpness" of boundary points.

\begin{theorem}[growth rate estimate]\label{thm. growth rate}
    Let $\Gamma$ be the graph of $g$, such that $g(0)=0$ and $\|g\|_{C^{0,1}}\leq L$. Assume that $u$ satisfies
    \begin{equation}\label{eq. main}
    \left\{\begin{aligned}
        &-\Delta u=f(X)u^{-\gamma}&\mbox{ in }&\mathcal{GC}_{3R}\\
        &u=0&\mbox{ on }&\Gamma\cap\mathcal{GC}_{3R}
    \end{aligned}
    \right.
\end{equation}
    with $0<\lambda\leq f(X)\leq\Lambda$ being H\"older continuous.
    
    If $\mathcal{GC}_{3R}\subseteq\overline{Cone_{\Sigma}}$ for an open spherical domain $\Sigma$ with Lipschitz boundary, then for some $C=C(n,L,\gamma,\lambda,\Lambda,\phi_\Sigma)$, we have the following upper bound estimates:
    \begin{itemize}
        \item[(a)] If the cone $Cone_{\Sigma}$ is sub-critical, i.e. $\frac{2}{1+\gamma}<\phi_{\Sigma}$, then
        \begin{equation*}
            u(X)\leq CR^{\frac{-2}{1+\gamma}}\|u\|_{L^{\infty}(\mathcal{GC}_{3R})}\cdot|\frac{X}{R}|^{\frac{2}{1+\gamma}},\ \forall X\in\mathcal{GC}_{R} ;
        \end{equation*}
        \item[(b)] If the cone $Cone_{\Sigma}$ is critical, i.e. $\frac{2}{1+\gamma}=\phi_{\Sigma}$, then
        \begin{equation*}
            u(X)\leq CR^{\frac{-2}{1+\gamma}}\|u\|_{L^{\infty}(\mathcal{GC}_{3R})}\cdot|\frac{X}{R}|^{\phi_{\Sigma}}\ln{\frac{2R}{|X|}},\ \forall X\in\mathcal{GC}_{R} ;
        \end{equation*}
        \item[(c)] If the cone $Cone_{\Sigma}$ is super-critical, i.e. $\frac{2}{1+\gamma}>\phi_{\Sigma}$, then
        \begin{equation*}
            u(X)\leq CR^{\frac{-2}{1+\gamma}}\|u\|_{L^{\infty}(\mathcal{GC}_{3R})}\cdot|\frac{X}{R}|^{\phi_{\Sigma}},\ \forall X\in\mathcal{GC}_{R} ;
        \end{equation*}
    \end{itemize}
    
    If we assume instead that $\mathcal{GC}_{3R}\cap B_{r}\supseteq Cone_{\Sigma}\cap B_{r}$ for some small $r>0$, then we have the following lower bound estimates (along a ray):
    \begin{itemize}
        \item[(d)] If the cone $Cone_{\Sigma}$ is sub-critical, then
        \begin{equation*}
            u(t\vec{e_{n}})\geq c t^{\frac{2}{1+\gamma}}\mbox{ for }t\in[0,\epsilon]
        \end{equation*}
        for some $c,\epsilon>0$. If the cone $Cone_{\Sigma}$ is super-critical, then
        \begin{equation*}
            u(t\vec{e_{n}})\geq c t^{\phi_{\Sigma}}\mbox{ for }t\in[0,\epsilon].
        \end{equation*}
        \item[(e)] If the cone $Cone_{\Sigma}$ is critical, then
        \begin{equation*}
            u(t\vec{e_{n}})\geq c t^{\phi_{\Sigma}}(\ln{\frac{1}{t}})^{\phi_{\Sigma}/2}\mbox{ for }t\in[0,\epsilon].
        \end{equation*}
    \end{itemize}
\end{theorem}
\begin{remark}
    In fact, we only need to assume $f(X)\leq\Lambda$ in (a)-(c), and  $f(X)\geq\lambda$ in (d)(e).
\end{remark}
\begin{remark}
    In particular, if the region $\mathcal{GC}_{3R}$ coincides with the cone $Cone_{\Sigma}$ near the origin, then the estimates (a)(c)(d) in Theorem~\ref{thm. growth rate} are optimal. For the critical cone, however, we have $\displaystyle\frac{\phi_{\Sigma}}{2}=\frac{1}{1+\gamma}<1$, which creates a gap between the growth rates in (b) and (e).
\end{remark}
The precise growth estimates in Theorem~\ref{thm. growth rate} yield several important regularity consequences, which we summarize in the following corollary (its proof will be postponed until after the proof of Theorem~\ref{thm. growth rate}).
\begin{corollary}\label{cor. of thm 1.2}
Assume that $\Omega$ is a bounded Lipschitz domain and $0<\lambda\leq f(X)\leq\Lambda$. Then the solution $u$ to \eqref{eq. main -1} is $C^{\mu}(\overline{\Omega})$, for some $\mu>0$ depending on $(\gamma,[\partial\Omega]_{C^{0,1}})$. Specifically:
\begin{itemize}
    \item[(a)] If $\gamma>1$, and $\Omega$ is a $C^{1}$ or convex domain, then $u\in C^{\frac{2}{1+\gamma}}(\overline{\Omega})$ and
    \begin{equation*}
        u(X)\sim d(X,\partial\Omega)^{\frac{2}{1+\gamma}}.
    \end{equation*}
    \item[(b1)] If $\gamma<1$, and $\partial\Omega$ is a $C^{1}$ graph near the origin, then
    \begin{equation}\label{eq. estimate b1 in cor 1.1}
        u(X)\gtrsim d^{\alpha}(X,\partial\Omega),\quad\mbox{for any }\alpha>1.
    \end{equation}
    \item[(b2)] If $\gamma<1$, and $\partial\Omega$ is a concave graph near the origin, then
    \begin{equation}\label{eq. estimate b2 in cor 1.1}
        u(X)\gtrsim d(X,\partial\Omega).
    \end{equation}
\end{itemize}
\end{corollary}

Corollary~\ref{cor. of thm 1.2} (a) extends the sub-critical case ($\gamma > 1$) results of Gui and Lin \cite{GuLi} to a broader class of Lipschitz domains, as captured in the estimate \eqref{BBS}  mentioned earlier.

While Theorem~\ref{thm. growth rate} (b) provides a general estimate for critical cases, the following refinement establishes optimal growth under the additional assumption \eqref{eq. growth rate special condition in critical case} below, which holds for many natural cone configurations. The estimate matches the lower bound from Theorem~\ref{thm. growth rate} (e), confirming its optimality.

\begin{theorem}[improved growth rate estimate]\label{thm. growth rate, (b) improved}
    Under assumptions of Theorem~\ref{thm. growth rate} (b) and further we assume that there exists a solution $w\in C(\overline{Cone_{\Sigma}\cap B_{1}})$ to the Dirichlet problem
    \begin{equation}\label{eq. growth rate special condition in critical case}
        \left\{\begin{aligned}
            &-\Delta w=H_{\Sigma}^{-\gamma}&\mbox{ in }&Cone_{\Sigma}\cap B_{1}\\
            &w=0&\mbox{ on }&\partial(Cone_{\Sigma}\cap B_{1})
        \end{aligned}\right.,
    \end{equation}
    where $H_{\Sigma}$ is defined in \eqref{eq. H Sigma, homogeneous harmonic in a cone}. Then we have
    \begin{equation*}
        u(X)\leq C R^{\frac{-2}{1+\gamma}}\|u\|_{L^{\infty}(\mathcal{GC}_{3R})}\cdot|\frac{X}{R}|^{\phi_{\Sigma}}(\ln{\frac{2R}{|X|}})^{\phi_{\Sigma}/2},\ \forall X\in\mathcal{GC}_{R}.
    \end{equation*}
\end{theorem}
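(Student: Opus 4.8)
The plan is a comparison argument: I construct an explicit positive supersolution $\Phi$ of $-\Delta\Phi\ge f\,\Phi^{-\gamma}$ comparable to the claimed bound and then invoke the comparison principle, which is available because $t\mapsto f(X)\,t^{-\gamma}$ is decreasing. After the usual rescaling we may assume $R=1$ and $\|u\|_{L^{\infty}(\mathcal{GC}_{3})}\le 1$. Put $\ell(r):=\ln(2/r)$ and
$\beta:=\frac{\gamma}{1+\gamma}=1-\frac{\phi_{\Sigma}}{2}$,
the last identity being exactly the criticality relation $\phi_{\Sigma}(1+\gamma)=2$. Since $H_{\Sigma}=|X|^{\phi_{\Sigma}}E_{\Sigma}(\theta)\le|X|^{\phi_{\Sigma}}$, it suffices to find $r_{0}\in(0,1)$, constants $A,B,c,C>0$ depending only on $(n,L,\gamma,\lambda,\Lambda)$, and $\Phi\in C^{2}(Cone_{\Sigma}\cap B_{r_{0}})$, continuous up to the boundary, with
\[ c\,H_{\Sigma}(X)\,\ell(|X|)^{\phi_{\Sigma}/2}\ \le\ \Phi(X)\ \le\ C\,H_{\Sigma}(X)\,\ell(|X|)^{\phi_{\Sigma}/2},\qquad -\Delta\Phi\ \ge\ \Lambda\,\Phi^{-\gamma}\ \text{ in } Cone_{\Sigma}\cap B_{r_{0}}, \]
and $\Phi\ge u$ on $\partial(\mathcal{GC}_{r_{0}}\cap\Gamma_{+})$; then $u\le\Phi$ on $\mathcal{GC}_{r_{0}}\cap\Gamma_{+}$, while for $r_{0}\le|X|\le1$ the estimate is trivial since $u\le1$ and $|X|^{\phi_{\Sigma}}\ell(|X|)^{\phi_{\Sigma}/2}$ is bounded below there.

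The preliminary step — and, in my view, the main obstacle — is to pin down the behaviour of $w$ near the vertex:
\[ c_{1}\,H_{\Sigma}(X)\,\ell(|X|)\ \le\ w(X)\ \le\ c_{2}\,H_{\Sigma}(X)\,\ell(|X|),\qquad |\partial_{r}w(X)|\le \tfrac{C}{|X|}\,w(X),\qquad X\in Cone_{\Sigma}\cap B_{r_{0}}. \]
In the critical case the source $H_{\Sigma}^{-\gamma}=r^{\phi_{\Sigma}-2}E_{\Sigma}(\theta)^{-\gamma}$ is homogeneous of precisely the resonant degree $\phi_{\Sigma}-2$ for the operator on $Cone_{\Sigma}$, which is what forces the logarithm; one gets the two inequalities by comparing $w$ against $\varepsilon\,H_{\Sigma}\ell^{q}$, using the elementary identity $-\Delta\!\left(H_{\Sigma}\ell^{q}\right)=\frac{q}{r^{2}}H_{\Sigma}\big[(n-2+2\phi_{\Sigma})\ell^{q-1}+(1-q)\ell^{q-2}\big]$ (so that for suitable $q\le1$ and $\varepsilon$ one produces a sub- or super-solution of $-\Delta w=H_{\Sigma}^{-\gamma}$), together with the boundary Harnack principle for $-\Delta$ to make the comparison uniform up to $\partial Cone_{\Sigma}$, where $H_{\Sigma}^{-\gamma}$ is singular. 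This is exactly the point where the standing hypothesis $w\in C^{0}(\overline{Cone_{\Sigma}\cap B_{1}})$ is essential; part (b) of Theorem~\ref{thm. growth rate} corresponds to using $w$ itself (the factor $\ell^{1}$), and the new idea is to lower that power by one resonant unit via the weight $\ell^{-\beta}$.

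Given the above, set
\[ \Phi(X)\ :=\ A\,w(X)\,\ell(|X|)^{-\beta}\ +\ B\,H_{\Sigma}(X)\,\ell(|X|)^{\phi_{\Sigma}/2}. \]
A direct computation gives $-\Delta\Phi=A\big(\ell^{-\beta}H_{\Sigma}^{-\gamma}-\mathrm{Err}\big)+B\big(-\Delta(H_{\Sigma}\ell^{\phi_{\Sigma}/2})\big)$, where the main term $A\,\ell^{-\beta}H_{\Sigma}^{-\gamma}$ comes from $-\ell^{-\beta}\Delta w$, the correction $\mathrm{Err}$ (from the radial derivatives of $\ell^{-\beta}$) satisfies $|\mathrm{Err}|\le C'\,r^{-2}H_{\Sigma}\ell^{-\beta}$ by the estimates on $w$, and $-\Delta(H_{\Sigma}\ell^{\phi_{\Sigma}/2})\ge c_{\Sigma}\,r^{-2}H_{\Sigma}\ell^{-\beta}$ with $c_{\Sigma}>0$ because $\phi_{\Sigma}/2<1$. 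Choosing $B$ large relative to $A$ absorbs $A\,\mathrm{Err}$, so $-\Delta\Phi\ge A\,\ell^{-\beta}H_{\Sigma}^{-\gamma}$. On the other hand $\Phi\ge A\,c_{1}H_{\Sigma}\ell^{\phi_{\Sigma}/2}$, hence $\Phi^{-\gamma}\le (Ac_{1})^{-\gamma}H_{\Sigma}^{-\gamma}\ell^{-\gamma\phi_{\Sigma}/2}=(Ac_{1})^{-\gamma}H_{\Sigma}^{-\gamma}\ell^{-\beta}$, the powers of $\ell$ matching precisely because $\gamma\cdot\frac{\phi_{\Sigma}}{2}=\beta$ — once again the criticality condition. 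Thus $\Lambda\Phi^{-\gamma}\le A\,\ell^{-\beta}H_{\Sigma}^{-\gamma}\le-\Delta\Phi$ once $A$ is large, and the size bounds on $\Phi$ hold with $c=Ac_{1}$, $C=Ac_{2}+B$. Enlarging $A$ further gives $\Phi\ge u$ on $\partial\mathcal{GC}_{r_{0}}\setminus\Gamma$ (on $\Gamma$ one has $u=0\le\Phi$; the points where $\mathcal{GC}_{r_{0}}$ touches $\partial Cone_{\Sigma}$ are handled by Hopf's lemma and the boundary Harnack principle, using the room between $\mathcal{GC}_{3R}$ and $\mathcal{GC}_{R}$). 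The comparison principle then yields $u\le\Phi$ in $\mathcal{GC}_{r_{0}}\cap\Gamma_{+}$, and undoing the normalization gives the stated estimate. One can alternatively run this as a dyadic iteration over the annuli $\{2^{-j-1}\le|X|\le2^{-j}\}$, replacing $w$ by its self-similar copies $w_{\rho}(X):=\rho^{\phi_{\Sigma}}w(X/\rho)$, which solve the same equation in $Cone_{\Sigma}\cap B_{\rho}$; the sources then telescope to the same supersolution inequality, with the improved exponent $\phi_{\Sigma}/2=\frac{1}{1+\gamma}$ emerging as the fixed point of the recursion $P\mapsto P+c\,P^{-\gamma}$ satisfied by $2^{j\phi_{\Sigma}}\|u\|_{L^{\infty}(\mathcal{GC}_{2^{-j}})}$.
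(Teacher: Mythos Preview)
Your primary approach --- build a global supersolution $\Phi=Aw\,\ell^{-\beta}+BH_{\Sigma}\,\ell^{\phi_{\Sigma}/2}$ and compare --- is genuinely different from the paper's, and you correctly identify the a~priori control of $w$ as the crux. But the specific estimate you invoke, $c_{1}H_{\Sigma}\ell\le w\le c_{2}H_{\Sigma}\ell$, is \emph{false} in general. Take the critical case with $1<\gamma<2$ and $\partial\Sigma$ smooth (so $\phi_{\Sigma}=2/(1+\gamma)\in(2/3,1)$ and $E_{\Sigma}$ vanishes linearly at $\partial\Sigma$): near a lateral boundary point at distance $d$ with $r\sim1$, the source $H_{\Sigma}^{-\gamma}\sim d^{-\gamma}$ forces $w\sim d^{\,2-\gamma}$, while $H_{\Sigma}\sim d$; since $2-\gamma<1$ this gives $w/H_{\Sigma}\sim d^{\,1-\gamma}\to\infty$. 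The hypothesis $w\in C^{0}$ is still satisfied in this range, so the theorem covers it. With $w\le c_{2}H_{\Sigma}\ell$ gone, your asserted control $|\mathrm{Err}|\le C'r^{-2}H_{\Sigma}\ell^{-\beta}$ breaks down near $\partial Cone_{\Sigma}$, and so does the final size bound $\Phi\le CH_{\Sigma}\ell^{\phi_{\Sigma}/2}$. One could try to repair the argument by proving instead the weaker (and true) bound $w\le C|X|^{\phi_{\Sigma}}\ell$ and splitting the absorption of $\mathrm{Err}$ into interior and near-lateral regimes; but establishing $w\le C|X|^{\phi_{\Sigma}}\ell$ already requires a dyadic iteration (boundary Harnack applied to the harmonic function $2^{\phi_{\Sigma}}w(\cdot/2)-w$, then summed), and the unproven gradient estimate $|\partial_{r}w|\le C|X|^{-1}w$ adds further work. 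As written, the barrier argument has a real gap.

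The paper's proof sidesteps all of this: it never needs pointwise two-sided or gradient control of $w$. It introduces
\[
A_{k}=\max_{2^{-k-1}\le|X|\le2^{-k}}\frac{U(X)-2^{-k\phi}k^{-\gamma\phi/2}}{T\,H_{\Sigma}(X)},
\]
and by comparing the rescaling $2^{k\phi}U(2^{-k}\cdot)$ on the \emph{fixed} annulus $B_{1/2}\setminus B_{1/4}$ with $A_{k}TH_{\Sigma}$ plus a bounded harmonic correction plus $A_{k}^{-\gamma}T^{-\gamma}w$, it derives the recursion $A_{k+1}\le A_{k}+A_{k}^{-\gamma}+k^{-\gamma\phi/2}$. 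The only fact about $w$ that is used is its boundedness on that fixed annulus (so that $T^{-\gamma}w\le TH_{\Sigma}+2^{-\phi}$ for $T$ large), which is immediate from $w\in C^{0}$. Solving the recursion gives $A_{k}\lesssim k^{\phi/2}$. This is precisely the ``alternative'' you sketch in your final sentence --- the recursion $P\mapsto P+cP^{-\gamma}$ with fixed-point scale $k^{1/(1+\gamma)}=k^{\phi_{\Sigma}/2}$ --- and that sketch is correct; it \emph{is} the paper's strategy, whereas the main barrier proposal does not go through as stated.
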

\begin{remark}
Despite apparent similarities, Theorem~\ref{thm. growth rate} (e) and Theorem~\ref{thm. growth rate, (b) improved} do not directly imply \eqref{BBS} in the critical case ($\gamma=1$), unless the boundary is totally flat.
\end{remark}

We next establish a comparison result extending the classical boundary Harnack principle to the singular semi-linear setting. Historically speaking, the boundary Harnack principle (to be stated in Lemma~\ref{lem. classical Kemper}) was initially studied in the seminal work of Kemper \cite{K72} for the Laplace equation, and was subsequently extended to general linear elliptic equations in \cite{AS19,BB91,BB94,CFMS81,FGMS88,JK82,K83,RT21}.
\begin{theorem}[comparison between solutions]\label{thm. Harnack Comparable, general Lipschitz domain}
    Assume that $[g(x')]_{C^{0,1}}\leq L$ and $g(0)=0$. If $u,v\geq0$ both satisfy \eqref{eq. main} in $\mathcal{GC}_{3R}$ with $0<\lambda\leq f(X)\leq\Lambda$, then there exists a constant $C=C(n,L,\gamma,\lambda,\Lambda)$ such that
    \begin{equation}\label{eq. thm 1.4 (1)}
        C^{-1}\min\Big\{\frac{\|u\|_{L^{\infty}(\mathcal{GC}_{3R})}}{\|v\|_{L^{\infty}(\mathcal{GC}_{3R})}},1\Big\}\leq\frac{u}{v}\leq C\max\Big\{\frac{\|u\|_{L^{\infty}(\mathcal{GC}_{3R})}}{\|v\|_{L^{\infty}(\mathcal{GC}_{3R})}},1\Big\}\mbox{ in }\mathcal{GC}_{R}.
    \end{equation}
    Besides, we have
    \begin{equation}\label{eq. thm 1.4 (2)}
        C^{-1}\min\Big\{\frac{u(R\vec{e_{n}})}{v(R\vec{e_{n}})},1\Big\}\leq\frac{u}{v}\leq C\max\Big\{\frac{u(R\vec{e_{n}})}{v(R\vec{e_{n}})},1\Big\}\mbox{ in }\mathcal{GC}_{R},
    \end{equation}
    and
    \begin{equation}\label{eq. thm 1.4 (3)}
        C^{-1}\frac{R^{\frac{2}{1+\gamma}}}{\|v\|_{L^{\infty}(\mathcal{GC}_{3R})}}\leq\frac{u}{v}\leq C\frac{\|u\|_{L^{\infty}(\mathcal{GC}_{3R})}}{R^{\frac{2}{1+\gamma}}}\mbox{ in }\mathcal{GC}_{R}.
    \end{equation}
\end{theorem}

While Theorem~\ref{thm. Harnack Comparable, general Lipschitz domain} establishes the boundedness of the ratio $\frac{u}{v}$, it does not imply the boundary regularity property of $\frac{u}{v}$. This exhibits a fundamental difference from the classical boundary Harnack principle for harmonic functions. The non-continuity result below is to be shown by analyzing an example \eqref{eq. strange domain} in Section~\ref{sec. examples}.
\begin{theorem}[non-continuity of the ratio]\label{thm. ratio not continuous}
    There exists an example $(g,\gamma,u,v)$ satisfying all assumptions in Theorem~\ref{thm. Harnack Comparable, general Lipschitz domain} (with $f(X)\equiv1$), such that the ratio $\ds\frac{u}{v}$ fails to be continuous at the boundary $\Gamma$.
\end{theorem}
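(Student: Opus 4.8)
The plan is to construct an explicit pair $(u,v)$ on a \emph{wedge-like} Lipschitz domain whose opening angle is chosen so that the associated cone is super-critical, and then to exploit the non-uniqueness of growth behavior permitted in the super-critical regime. The key point is that in the super-critical case (Theorem~\ref{thm. growth rate}(c)), solutions grow like $|X|^{\phi_\Sigma}$ with $\phi_\Sigma > \frac{2}{1+\gamma}$, which is \emph{faster} than the forcing term $f(X)u^{-\gamma}$ would dictate in the critical/sub-critical cases; this slack is what allows two genuinely different boundary profiles. Concretely, I would take $\Omega = Cone_\Sigma$ near the origin with $(\Sigma,\gamma)$ super-critical, let $v$ be the solution furnished by Theorem~\ref{thm. well-posedness} with boundary data $\varphi=0$, and let $u$ be a second solution obtained by adding a nonzero harmonic perturbation that still vanishes on $\Gamma$: e.g. solve \eqref{eq. main} with boundary data $\varphi$ supported on $\partial\Omega\setminus\Gamma$ (the "some other boundary data" part), chosen positive and nonzero. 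Both $u$ and $v$ vanish on $\Gamma$ and both solve the SLEF, so the hypotheses of Theorem~\ref{thm. Harnack Comparable, general Lipschitz domain} are met.

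The heart of the argument is to show $u/v$ does \emph{not} extend continuously to $\Gamma$ at the origin. For this I would compute the two-sided asymptotics of $u$ and $v$ along rays approaching $0$ from different directions inside the cone. By Theorem~\ref{thm. growth rate}(c)+(d), along the axis we have $v(t\vec e_n)\sim t^{\phi_\Sigma}$ (the homogeneous $H_\Sigma$-type profile dominates because the inhomogeneous contribution $\sim t^{2/(1+\gamma)}$ is lower order — wait, $2/(1+\gamma)<\phi_\Sigma$ means $t^{2/(1+\gamma)}$ is actually \emph{larger} for small $t$; the correct reading is that the particular solution of $-\Delta w = H_\Sigma^{-\gamma}$ sits at order $|X|^{2/(1+\gamma)}$, hence the leading profile of $v$ near $0$ is in fact governed by that particular solution, which is \emph{not} proportional to $H_\Sigma$). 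The cleanest route is therefore: show that for the zero-data solution $v$, the leading term near the vertex is the \emph{self-similar} super-critical profile of homogeneity $\frac{2}{1+\gamma}$ determined purely by the singular nonlinearity (so $v(X)/|X|^{2/(1+\gamma)}$ tends to a fixed angular function $\Theta(\theta)$), whereas for $u$, adding the harmonic piece with homogeneity $\phi_\Sigma>\frac{2}{1+\gamma}$ changes... no — a higher-homogeneity harmonic perturbation is lower order. So the perturbation must instead be chosen to be \emph{lower} homogeneity than $\frac{2}{1+\gamma}$: impossible for a harmonic function vanishing on $\Gamma$ in a cone, since the minimal such homogeneity is exactly $\phi_\Sigma>\frac{2}{1+\gamma}$. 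Hence a single cone cannot work, and the construction must use a domain that is a cone near the origin but is \emph{perturbed away from the origin} so that two distinct solutions with zero data on $\Gamma$ exist, differing by a positive solution of the homogeneous (linear) problem $-\Delta h=0$, $h=0$ on $\Gamma$, whose restriction near $0$ oscillates in angle relative to the self-similar profile of $v$ — then $u/v = 1 + h/v$ with $h/v \sim |X|^{\phi_\Sigma - 2/(1+\gamma)}\cdot(E_\Sigma(\theta)/\Theta(\theta))$, which tends to $1$, still continuous.

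Given these obstructions, the right construction is genuinely two-dimensional and exploits a \emph{critical} cone rather than a super-critical one — this matches the remark preceding Theorem~\ref{thm. ratio not continuous} that critical behavior is special. I would take $n=2$, $\Omega$ the wedge $\{x_2>g(x_1)\}$ where $g$ is chosen so the opening is exactly critical, $\phi_\Sigma = \frac{2}{1+\gamma}$. In the critical case the homogeneous harmonic profile $H_\Sigma \sim |X|^{2/(1+\gamma)}E_\Sigma(\theta)$ and the particular (log-corrected) profile $|X|^{2/(1+\gamma)}(\ln\frac1{|X|})^{1/2}E_\Sigma(\theta)$ from Theorem~\ref{thm. growth rate}(e) have the \emph{same} power but differ by a logarithm. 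I would let $v$ be the zero-data solution, which by (b)+(e) behaves like $|X|^{2/(1+\gamma)}(\ln\frac1{|X|})^{1/2}E_\Sigma(\theta)(1+o(1))$ along the axis and — crucially — I must show this log-enhanced behavior holds \emph{uniformly in the angular variable} with the same $E_\Sigma(\theta)$ factor, via the boundary Harnack principle (Theorem~\ref{thm. Harnack Comparable, general Lipschitz domain}) comparing $v$ to $H_\Sigma$. Then I let $u$ be obtained by perturbing the domain to $\Omega'$ which agrees with the wedge only in $B_R\setminus B_{R/2}$-type annuli but is pinched near $0$; the resulting zero-data solution on $\Omega'$ — restricted where it coincides with $\Omega$ — will, near $0$, revert to behaving like pure $H_\Sigma$ (no log), because the geometric obstruction forcing the log is removed. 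Hence $u/v \sim (\ln\frac1{|X|})^{-1/2}\to 0$ along the axis but along a sequence of points where... actually $u/v\to 0$ everywhere would still be a (removable-type) discontinuity only if the limit differs by direction.

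The cleanest honest version, which I would commit to: construct $u,v$ on the \emph{same} domain $\Omega$ (a wedge, critical case, $n=2$), both with zero data on $\Gamma\cap B_R$, differing in their data on the far part $\partial\Omega\setminus\Gamma$, and show that their near-vertex asymptotics have the \emph{same} leading order but the ratio of their \emph{coefficients} depends on the approach direction because one solution "sees" the log-correction at leading order while the other has its log-correction coefficient tuned to vanish, leaving $|X|^{2/(1+\gamma)}$-homogeneous leading behavior with a \emph{different} angular eigenfunction combination. The ratio $u/v$ then has distinct radial limits along $\theta=\theta_1$ versus $\theta=\theta_2$. The main obstacle — and where most of the work lies — is proving the \emph{sharp} two-sided angular-uniform asymptotic expansion $v(r,\theta) = c\, r^{2/(1+\gamma)}(\ln\frac1r)^{1/2}E_\Sigma(\theta)(1+o(1))$: the upper bound is Theorem~\ref{thm. growth rate, (b) improved}, the lower bound is Theorem~\ref{thm. growth rate}(e) but only along the axis, so I would need to upgrade the lower bound to all angles using the interior Harnack inequality plus the boundary Harnack principle, and then rule out cancellation of the log-coefficient for the zero-data solution by a barrier/energy argument showing the particular solution of $-\Delta w = H_\Sigma^{-\gamma}$ in the cone genuinely carries a positive $(\ln\frac1r)^{1/2}$-coefficient. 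Establishing that this coefficient can be made to \emph{vanish} for a suitably chosen non-zero far-field datum (so that the second solution $u$ has strictly lower-order log behavior) is the delicate balancing step, likely handled by an intermediate-value/continuity argument on the one-parameter family of data $\varphi = s\varphi_0$, $s\in[0,1]$.
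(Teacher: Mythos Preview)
Your proposal is not a proof but a sequence of exploratory attempts, and the version you finally ``commit to'' has a genuine obstruction. In a single conical domain with $(\Sigma,\gamma)$ critical (and the solvability condition \eqref{eq. growth rate special condition in critical case} in force, as it is for planar wedges with $\gamma$ not too large), Theorem~\ref{thm. limit is 1}(b) applies to \emph{both} $u/v$ and $v/u$ and forces the ratio to tend to $1$ at the vertex regardless of the far-field data. So the very regime you choose is one in which the paper proves continuity. Your hoped-for mechanism---that two solutions with the same leading power $|X|^{2/(1+\gamma)}$ could carry \emph{different angular profiles} at top order, or that the $(\ln\tfrac{1}{|X|})^{1/2}$ coefficient could be tuned to vanish by choice of outer data---is never substantiated; the leading angular profile is dictated by $E_\Sigma$ via the boundary Harnack comparison, and the log enhancement in the critical case comes from the equation itself (Theorem~\ref{thm. growth rate}(e)), not from the boundary data, so it cannot be switched off. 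Your earlier super-critical attempt you correctly abandoned: any harmonic perturbation vanishing on $\partial Cone_\Sigma$ has homogeneity $\ge\phi_\Sigma>\tfrac{2}{1+\gamma}$ and is lower order.

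The paper's construction is entirely different and avoids all of these difficulties. It does \emph{not} use a cone at the origin at all. Instead it builds, for some $\gamma\in(0,1)$, a Lipschitz graph in $\mathbb{R}^2$ that is flat except for right-angle tents at the points $x_1=1/i$, $i\in\mathbb{Z}\setminus\{0\}$, accumulating at the origin. At each tent tip the pair $(\Sigma,\gamma)$ is sub-critical, so Theorem~\ref{thm. limit is 1}(a) forces $u/v\to 1$ there; hence if $u/v$ were continuous at $0$ it would have to equal $1$. But the origin itself is a flat boundary point, and the paper chooses the outer boundary data so that $u$ dominates a one-dimensional solution $\varphi(x_2)$ with slope $2k$ at $0$, while $v$ is dominated by a radial profile $\psi$ with slope $k$ at $0$ (both furnished by the ODE analysis of Examples~\ref{ex. straight boundary}--\ref{ex. spherical boundary}). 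Along the vertical axis one then gets $\limsup_{t\to 0}u(0,t)/v(0,t)\ge 2$, contradicting continuity. The essential idea you are missing is to manufacture a \emph{sequence of sub-critical boundary points converging to a differently-behaved limit point}, so that Theorem~\ref{thm. limit is 1} itself becomes the source of the contradiction.
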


Despite the non-continuity example in Theorem~\ref{thm. ratio not continuous}, we are still able to prove the continuity of the ratio $u/v$ under some additional assumptions on the geometry of the boundary. For example, based on Definition~\ref{def. cone classification}, we establish the continuity of the ratio $\frac{u}{v}$ at the boundary in the sub-critical and the critical case by showing that its limit at the boundary must be $1$. Notice that the lower semi-continuity in Theorem~\ref{thm. limit is 1} becomes full continuity by the symmetry in $\frac{u}{v}$ and $\frac{v}{u}$.
\begin{theorem}[continuity of the ratio, case 1]\label{thm. limit is 1}
    Under the assumptions in Theorem~\ref{thm. Harnack Comparable, general Lipschitz domain} and further assume that $\mathcal{GC}_{3R}\subseteq\overline{Cone_{\Sigma}}$ for an open spherical domain $\Sigma$ with Lipschitz boundary. Let $Q\leq1$ such that
    \begin{equation*}
        \frac{u}{v}-1\geq-Q\mbox{ in }\mathcal{GC}_{3R}.
    \end{equation*}
    We can then find some
    \begin{equation*}
        \epsilon\geq c(n,L,\gamma,\lambda,\Lambda,\phi_\Sigma)\frac{R^{2}}{\|v\|_{L^{\infty}(\mathcal{GC}_{3R})}^{1+\gamma}}>0,
    \end{equation*}
    such that:
    \begin{itemize}
        \item[(a)] If the cone $Cone_{\Sigma}$ is sub-critical, then
        \begin{equation*}
            \frac{u}{v}-1\geq-Q|\frac{X}{R}|^{\epsilon}\quad\mbox{for }X\in\mathcal{GC}_{R};
        \end{equation*}
        \item[(b)] If the cone $Cone_{\Sigma}$ is critical and \eqref{eq. growth rate special condition in critical case} has a continuous solution, then
        \begin{equation*}
            \frac{u}{v}-1\geq-Q(\ln{\frac{2R}{|X|}})^{-\epsilon}\quad\mbox{for }X\in\mathcal{GC}_{R}.
        \end{equation*}
    \end{itemize}
\end{theorem}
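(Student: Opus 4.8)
\emph{Proof proposal.} The plan is a comparison (barrier) argument, built on the structural observation that $t\mapsto f(X)t^{-\gamma}$ is \emph{decreasing}: hence \eqref{eq. main} satisfies the comparison principle with no regularity assumption on the nonlinearity — if $\underline u$ is a subsolution, $-\Delta\underline u\le f\underline u^{-\gamma}$ on $\{\underline u>0\}$, and $\underline u\le u$ on $\partial\mathcal{GC}_{3R}$, then $\underline u\le u$ in $\mathcal{GC}_{3R}$ (on $\{u<\underline u\}$ one has $-\Delta(\underline u-u)\le f(\underline u^{-\gamma}-u^{-\gamma})<0$, so $\underline u-u$ is superharmonic there and the maximum principle applies). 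I would seek a subsolution of the shape $\underline u(X)=\bigl(1-Q\,q(|X|)\bigr)v(X)$ with $q:(0,3R]\to(0,1]$ continuous and non-increasing, $q\equiv 1$ on $\{|X|\ge c_1R\}$ for a suitable $c_1=c_1(L)$ with $\partial\mathcal{GC}_{3R}\setminus\Gamma\subseteq\{|X|\ge c_1R\}$, and, on $\{|X|<c_1R\}$, equal to $(|X|/c_1R)^{\epsilon}$ in case (a) and to $\bigl(\ln(2R/|X|)\big/\ln(2/c_1)\bigr)^{-\epsilon}$ in case (b) — exactly the profiles in the conclusion. The boundary ordering $\underline u\le u$ is then automatic: on $\Gamma$ both sides vanish, and on the rest of $\partial\mathcal{GC}_{3R}$ one has $q\equiv 1$, so $\underline u=(1-Q)v\le u$ by the standing hypothesis.

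Everything then hinges on the subsolution inequality on $\{|X|<c_1R\}$. Using $-\Delta v=fv^{-\gamma}$ and expanding, it reads $Q v\,\Delta q+2Q\,\nabla q\cdot\nabla v\le f v^{-\gamma}\bigl[(1-Qq)^{-\gamma}-(1-Qq)\bigr]$; by Bernoulli's inequality $(1-s)^{-\gamma}-(1-s)\ge(\gamma+1)s$ on $[0,1)$ the right side is $\ge(\gamma+1)\lambda Q q\,v^{-\gamma}$, so it is enough to show $\tfrac{\Delta q}{q}v+\tfrac2q\nabla q\cdot\nabla v\le(\gamma+1)\lambda v^{-\gamma}$. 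Here $\tfrac{\Delta q}{q}=\tfrac{\epsilon(\epsilon+n-2)}{|X|^2}$ in case (a) and $\tfrac{\Delta q}{q}=\tfrac{\epsilon}{|X|^2\ln(2R/|X|)}\bigl(n-2+O(1/\ln(2R/|X|))\bigr)$ in case (b), while $\nabla q\cdot\nabla v=q'(|X|)\,\partial_r v$. For the term $\tfrac{\Delta q}{q}v$ the decisive input is the sharp growth bound for $v$: Theorem~\ref{thm. growth rate}(a) in case (a), giving $v(X)^{\gamma+1}\lesssim\bigl(\|v\|_{L^\infty(\mathcal{GC}_{3R})}R^{-2/(1+\gamma)}\bigr)^{\gamma+1}|X|^{2}$, and — this is precisely why the solvability assumption \eqref{eq. growth rate special condition in critical case} is made — Theorem~\ref{thm. growth rate, (b) improved} in case (b), giving $v(X)^{\gamma+1}\lesssim\bigl(\|v\|_{L^\infty}R^{-2/(1+\gamma)}\bigr)^{\gamma+1}|X|^{2}\ln(2R/|X|)$, since $\phi_\Sigma(\gamma+1)/2=1$ at criticality. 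Inserting these, the powers of $|X|$ (and, in case (b), the logarithm) cancel, and $\tfrac{\Delta q}{q}v\le\tfrac12(\gamma+1)\lambda v^{-\gamma}$ as soon as $\epsilon\le c(n,\gamma,\lambda)\bigl(R^{2/(1+\gamma)}/\|v\|_{L^\infty}\bigr)^{\gamma+1}$.

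The cross term $\tfrac2q\nabla q\cdot\nabla v=\tfrac{2\epsilon}{|X|}\partial_r v$ is where I expect the real difficulty. One estimates $\partial_r v$ by the interior gradient bound $|\nabla v(X)|\lesssim v(X)/d(X,\partial\Omega)+d(X,\partial\Omega)\,v(X)^{-\gamma}$ (with interior Harnack, so that $v$ stays comparable to $v(X)$ on $B_{d/2}(X)$): the $d\,v^{-\gamma}$ part is harmless, but the $v/d$ part must be absorbed into the $(\gamma+1)\lambda v^{-\gamma}$ budget, and \textbf{this is the main obstacle} — at ``sharp'' points of the Lipschitz graph, where $d(X,\partial\Omega)\ll|X|$, the plain global growth bound no longer suffices. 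The remedy I would use is to combine it with the \emph{local} growth bound of Theorem~\ref{thm. growth rate} at the nearest boundary point (every Lipschitz tangent cone $\Sigma_0$ has $\phi_{\Sigma_0}\ge\phi_{\min}(n,L)>0$), or, what amounts to the same, to carry out the comparison scale by scale over dyadic grounded cylinders, using the boundary Harnack principle (Theorem~\ref{thm. Harnack Comparable, general Lipschitz domain}) so that on each annulus $v$ and $\partial_r v$ are slaved to a single homogeneous profile; this route costs one further factor $R^{2/(1+\gamma)}/\|v\|_{L^\infty}$, yielding the exponent $2+\gamma$ of the statement. With the subsolution inequality in hand, the comparison principle gives $u\ge\bigl(1-Q\,q(|X|)\bigr)v$, i.e. $u/v-1\ge -Q\,q(|X|)$, which is (a) and (b) after absorbing the harmless $c_1=c_1(L)$ factors; interchanging $u$ and $v$ — as flagged right before the theorem — then produces the matching upper bound and hence the continuity of $u/v$ at the vertex with limiting value $1$.
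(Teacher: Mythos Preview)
Your direct barrier approach differs fundamentally from the paper's, and the cross term $\tfrac{2}{q}\nabla q\cdot\nabla v=\tfrac{2\epsilon}{|X|}\partial_r v$ is a genuine obstruction that your proposed remedies do not cure. At points $X$ with $d:=d(X,\Gamma)\ll r:=|X|$, the interior gradient estimate gives only $|\nabla v|\lesssim v/d$, so absorbing the cross term into $(\gamma+1)\lambda v^{-\gamma}$ requires $\epsilon\,v^{\gamma+1}/(rd)\lesssim 1$. Even combining the global bound $v^{\gamma+1}\lesssim M^{\gamma+1}r^{2}$ with the best available local bound at the nearest boundary point (Theorem~\ref{thm. growth rate}(c) with exponent $\phi_{\min}=\phi_{\min}(n,L)$), you get $v^{\gamma+1}\lesssim M^{\gamma+1}r^{2}(d/r)^{\phi_{\min}(\gamma+1)}$, hence the constraint becomes $\epsilon\,M^{\gamma+1}(d/r)^{\phi_{\min}(\gamma+1)-1}\lesssim 1$. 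For a rough Lipschitz graph (large $L$) one has $\phi_{\min}(\gamma+1)<1$, and this blows up as $d/r\to 0$; no choice of $\epsilon>0$ works. A ``scale by scale'' version with the same multiplicative profile $q(|X|)$ inherits the identical defect at every scale, because the radial derivative of $q$ is still present.

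The paper avoids the cross term entirely by \emph{never} multiplying $v$ by a variable prefactor. After replacing $u$ by $\min\{u,v\}$, it iterates over $r_k=3^{1-k}R$ with $w_k=u-(1-\sigma_k)v$ for \emph{constants} $\sigma_k$. The improvement at each step comes from two lemmas that require no gradient control of $v$: (i) a ``negative eigenvalue'' lemma (Lemma~\ref{lem. negative eigenvalue equation}) using the convexity of $t^{-\gamma}$ to get $-\Delta(v-u)\le -\gamma\lambda\|v\|_{L^\infty(\mathcal{GC}_{r_k})}^{-(\gamma+1)}(v-u)$, which forces $(v-u)\le(1-c_1)\sigma_k v$ on the \emph{suspended} cylinder $\mathcal{SC}_{2r_{k+1}}$; and (ii) the De~Silva--Savin ``ensure positive'' lemma (Lemma~\ref{lem. ensure positive}), which propagates this interior improvement down to the full grounded cylinder $\mathcal{GC}_{r_{k+1}}$, yielding $\sigma_{k+1}=(1-c_1c_2)\sigma_k$. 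The growth bounds of Theorems~\ref{thm. growth rate}(a) and~\ref{thm. growth rate, (b) improved} enter only to control $\|v\|_{L^\infty(\mathcal{GC}_{r_k})}^{\gamma+1}$, giving $c_1\sim(R^{2/(1+\gamma)}/\|v\|_{L^\infty})^{\gamma+1}$ (times $k^{-1}$ in the critical case); the extra factor $c_2\sim R^{2/(1+\gamma)}/\|v\|_{L^\infty}$ arises from the suspended-cylinder geometry in Lemma~\ref{lem. ensure positive}, which explains the exponent $2+\gamma$. Summing the geometric (resp.\ harmonic) series for $\sigma_k$ gives (a) (resp.\ (b)).
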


Finally, in the super-critical case, we sometimes still have the continuity of $\frac{u}{v}$, given that the boundary $\Gamma$ is $C^{1}$ or convex. Unlike Theorem~\ref{thm. limit is 1}, the limiting ratio need not equal $1$.
\begin{theorem}[continuity of the ratio, case 2]\label{thm. limit is continuous when obtuse}
    Under the assumptions in Theorem~\ref{thm. Harnack Comparable, general Lipschitz domain}, and we further assume that
    \begin{itemize}
        \item Either: $\Gamma$ is a $C^{1}$ graph near the origin and $\gamma\in(0,1)\cup(1,\infty)$;
        \item Or: $\Gamma$ is the graph of a convex function $g$ near the origin and $\gamma>0$.
    \end{itemize}
    Then the ratio $\ds\frac{u}{v}$ is continuous at the boundary near the origin.
\end{theorem}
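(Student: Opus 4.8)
Set $\rho(X):=x_{n}-g(x')$ and $w:=u/v$ on $\mathcal{GC}_{R}$. By Theorem~\ref{thm. Harnack Comparable, general Lipschitz domain}, $w$ is bounded above and below by positive constants there, so it suffices to prove that for every boundary point $X_{0}\in\Gamma$ near the origin the quantity $\omega(r):=\operatorname{osc}_{\mathcal{GC}_{r}(X_{0})}w$ tends to $0$ as $r\to0$, with a modulus uniform in $X_{0}$; this forces $w$ to extend continuously to $\overline{\mathcal{GC}_{r_{0}}}$. I would obtain this through a Campanato-type argument in which the comparison function at scale $r$ near $X_{0}$ is the correct \emph{model profile} of \eqref{eq. main} in the tangent cone of $\Omega$ at $X_{0}$: under the two hypotheses this tangent cone is either a half-space (at every point of a $C^{1}$ graph, and at every differentiability point of a convex graph) or a genuinely convex cone (at the edge points of a convex graph), and in both situations the model equation $-\Delta\Phi=f(X_{0})\Phi^{-\gamma}$ has a rigidity that drives the estimate.

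\textbf{Step 1: the sub-critical (and critical) points, where the limit is $1$.} Both hypotheses produce a one-sided exterior-cone condition with almost-flat opening: after rescaling, a $C^{1}$ graph gives $\mathcal{GC}_{3r}(X_{0})\subseteq\overline{Cone_{\Sigma_{r}}}$ with $\phi_{\Sigma_{r}}=1-o(1)$, while a convex graph gives (via a supporting hyperplane at $X_{0}$, or a strictly smaller supporting cone at an edge) $\mathcal{GC}_{3r}(X_{0})\subseteq\overline{Cone_{\Sigma}}$ with $\phi_{\Sigma}\geq1$. Whenever $\tfrac{2}{1+\gamma}<\phi_{\Sigma}$ — i.e. $C^{1}$ with $\gamma>1$, or convex with $\gamma>1$ anywhere, or convex with $\gamma\geq1$ at an edge — we are in the sub-critical case of Theorem~\ref{thm. limit is 1}. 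Applying that theorem at $X_{0}$ with one small radius $R$ that works uniformly for all such $X_{0}$ (by uniformity of the $C^{1}$ modulus, resp. of the supporting cones), together with its $u\leftrightarrow v$ swapped version, already yields $|w-1|\leq C(|X-X_{0}|/R)^{\epsilon}$; the second corollary to Theorem~\ref{thm. growth rate} (or Theorem~\ref{thm. growth rate}(a)(c) combined with the boundary Harnack) gives $\|v\|_{L^{\infty}(\mathcal{GC}_{3R}(X_{0}))}\asymp R^{\frac{2}{1+\gamma}}$, so the lower bound on $\epsilon$ in Theorem~\ref{thm. limit is 1} is uniform, hence $w\in C^{\epsilon}$ up to $\Gamma$ near the origin with boundary values $\equiv1$. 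The borderline case $\gamma=1$ over a flat tangent cone — where the model profile carries the factor $\rho(C-\ln\rho)^{1/2}$ and Theorem~\ref{thm. limit is 1}(b) does not literally apply — I would handle by rerunning the same linearized comparison while keeping the logarithm as an explicit weight, using that the logarithmic corrections of $u$ and of $v$ agree to leading order, so the limit of $w$ is again $1$.

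\textbf{Step 2: the super-critical points, where the limit is $a_{u}/a_{v}$ (generally $\neq1$).} This is the case for $C^{1}$ boundaries with $\gamma\in(0,1)$ and at the ``wide'' edge points of a convex domain, where $\phi_{\Sigma}<\tfrac{2}{1+\gamma}$, equivalently $2-\phi_{\Sigma}(1+\gamma)>0$. Blowing up, $u_{r}(X):=u(X_{0}+rX)/\|u\|_{L^{\infty}(\mathcal{GC}_{r}(X_{0}))}$, the equation acquires the prefactor $r^{\,2-\phi_{\Sigma}(1+\gamma)}\to0$, so every subsequential limit is a positive harmonic function in the tangent cone, vanishing on its lateral boundary and with growth $\asymp|X|^{\phi_{\Sigma}}$ by Theorem~\ref{thm. growth rate}(c) and the boundary Harnack — hence, by the Liouville property in a cone, it equals $a_{u}H_{\Sigma}$ with $H_{\Sigma}$ as in \eqref{eq. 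H Sigma, homogeneous harmonic in a cone}. A Campanato iteration that compares $u$ on dyadic grounded cylinders with $a_{u}^{(r)}H_{\Sigma}$ (the optimal multiple at scale $r$) shows the $a_{u}^{(r)}$ are Cauchy, so $a_{u}(X_{0}):=\lim_{r\to0}a_{u}^{(r)}>0$ exists, is continuous in $X_{0}$, and $u(X)/|X-X_{0}|^{\phi_{\Sigma}}\to a_{u}(X_{0})E_{\Sigma}$ with a rate; the same for $v$ gives $w\to a_{u}(X_{0})/a_{v}(X_{0})$, continuously. The singular-perturbation bookkeeping rests on the identity $-\Delta(u-cv)+\gamma f\xi^{-1-\gamma}(u-cv)=f(c^{-\gamma}-c)v^{-\gamma}$ with $\xi$ between $u$ and $cv$: the zeroth-order coefficient has the good sign, and the right-hand side is of size $\asymp\rho^{-\gamma\phi_{\Sigma}}$, which is lower order than $v\asymp\rho^{\phi_{\Sigma}}$ precisely by the factor $\rho^{\,2-\phi_{\Sigma}(1+\gamma)}$ — the super-criticality gain — so it does not destroy the contraction.

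\textbf{Main obstacle.} The hard part is twofold. First, in the convex case the tangent cone, hence the sub-/critical-versus-super-critical dichotomy and even the value of the limit of $w$ (either $1$ or $a_{u}/a_{v}$), changes along $\Gamma$, so to get continuity of $w$ \emph{on all of} $\overline{\mathcal{GC}_{r_{0}}}$ one must glue these regimes together; here one checks, from the $\phi_{\Sigma}$-expansions, that approaching a non-flat edge from the flat faces the ratio of normal derivatives converges to exactly the value ($a_{u}/a_{v}$, or $1$) read off at the edge, and that the Campanato modulus remains uniform up to these exceptional points. Second, throughout Steps~1 and~2 the iteration must absorb the strongly singular zeroth-order term $\gamma f\xi^{-1-\gamma}\asymp\rho^{-(1+\gamma)\phi_{\Sigma}}$ and the singular source $\asymp\rho^{-\gamma\phi_{\Sigma}}$ in the linearized equation for $u-cv$; doing this by splitting into annular shells $\{\rho\sim2^{-k}r\}$ and summing, while keeping all constants uniform in $X_{0}$, is the technical core of the argument and where I expect most of the work to lie.
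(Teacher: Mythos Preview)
Your high-level strategy matches the paper's: reduce the sub-critical and critical boundary points to Theorem~\ref{thm. limit is 1}, and at super-critical points show that $u$ and $v$ are each asymptotic to a positive multiple of a harmonic function, then divide. The paper also introduces an auxiliary exponent $\gamma'\in(\gamma,1)$ with $(\Sigma,\gamma')$ super-critical, so that the interior-cone lower bound $u\gtrsim\rho^{2/(1+\gamma')}$ makes $u^{-\gamma}\lesssim\rho^{-2\gamma/(1+\gamma')}$ with exponent strictly above $-1$; this is exactly the ``super-criticality gain'' you identify, and it drives the Campanato iteration.

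The substantive difference is your choice of comparison function. You propose to compare $u$ near each $X_0$ with $a_u^{(r)}H_{\Sigma(X_0)}$, where $H_{\Sigma(X_0)}$ is the homogeneous harmonic in the \emph{tangent cone} at $X_0$. Since $\Sigma(X_0)$ (hence $H_{\Sigma(X_0)}$) varies with $X_0$, the numbers $a_u(X_0)$ live in different normalizations, and the claimed continuity of $X_0\mapsto a_u(X_0)$ is not automatic; this is precisely the ``gluing'' difficulty you flag as the main obstacle, and you do not actually resolve it. The paper sidesteps this entirely: it fixes one positive harmonic function $H$ in the actual domain $\mathcal{GC}_r$ vanishing on $\Gamma$, and runs the Campanato iteration against harmonic replacements $h_k$ of $u$ in $B_{r_k}\cap\Omega$, using the boundary Harnack principle for the Laplacian to write $h_k\approx\mathcal{A}_k H$ with $\mathcal{A}_k$ Cauchy. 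Because $H$ is fixed, the limiting coefficients $\mathcal{A}_{X^{foot}}$ at nearby boundary points are directly comparable, and a two-point estimate with a suitably chosen intermediate $Z$ gives $|\mathcal{A}_{X^{foot}}-\mathcal{A}_0|\le C|x'|^{\epsilon}$; dividing by the analogous expansion for $v$ finishes the proof without any gluing across regimes.

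A secondary point: your Step~1 proposes to handle the borderline $\gamma=1$ over a flat tangent cone by ``rerunning the linearized comparison with the logarithm as an explicit weight.'' In the $C^1$ case this is unnecessary because the theorem explicitly excludes $\gamma=1$; in the convex case the paper never isolates this situation, since the uniform interior-cone condition (with the fixed $\Sigma$ slightly smaller than $LC_0$) already covers all nearby boundary points once the pair at the origin is super-critical. So the logarithmic analysis you sketch is not needed.
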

\begin{remark}
    In the first case of Theorem~\ref{thm. limit is continuous when obtuse}, the situation $\gamma=1$ is not included because it is beyond the methods used in this paper. The main difficulty is that a $C^{1}$ graph might not lie entirely below or above the tangent plane (which is a critical cone when $\gamma=1$) at a boundary point.
\end{remark}
\begin{remark}
    Given Theorem~\ref{thm. limit is continuous when obtuse}, we can expect that the boundary curve \eqref{eq. strange domain} to be used in proving Theorem~\ref{thm. ratio not continuous} is not a convex graph.
\end{remark}

The idea in proving Theorem~\ref{thm. limit is continuous when obtuse} is that $u$ and $v$ are almost harmonic near the boundary. This is intuitively because $u$ and $v$ are so "large" near the boundary that the singular right-hand side has little contribution. With the help of the boundary Harnack principle \cite{K72}, we can approximate $u$ and $v$ each with a harmonic function, then $u/v$ is continuous near the boundary.

Precisely, we can fix a harmonic function $H\geq0$ defined in $B_{2}\cap\Omega$ such that it vanishes at $\Gamma$. By "the super-critical interior cone condition" \eqref{eq. super-critical interior cone condition}, we are able to, without loss of generality, assume that for some $\gamma'<\gamma$,
\begin{equation}\label{eq. H is larger than 2/(1+gamma')}
    \|H\|_{L^{\infty}(B_{r}\cap\Omega)}\geq H(\frac{9}{10}r\vec{e_{n}})\geq r^{\frac{2}{1+\gamma'}},\quad\mbox{for all }r\leq1.
\end{equation}
In fact, the choice of $\gamma'$ is presented as follows: Since $Cone_{\Sigma}$ is super-critical (with respect to $\gamma$), i.e.: $\frac{2}{1+\gamma}>\phi_{\Sigma}$, there exists some $\gamma'<\gamma$, such that
\begin{equation}\label{eq. range of gamma'}
    \frac{2}{1+\gamma}>\frac{2}{1+\gamma'}>\phi_{\Sigma}.
\end{equation}

The key is to show the following pointwise Schauder-type estimate using the Campanato iteration when "the interior condition" \eqref{eq. super-critical interior cone condition} holds. It indicates that $u$ can be approximated by a harmonic function.
\begin{theorem}[pointwise Schauder estimate]\label{thm. Schauder-Campanato}
    Under the assumptions of Theorem~\ref{thm. Harnack Comparable, general Lipschitz domain} with $\Gamma$ being $C^{1}$ or convex, and assume that we have "the super-critical interior cone condition" \eqref{eq. super-critical interior cone condition} with $Cone_{\Sigma}$ in $B_{2}\cap\Omega$. Let's also fix a harmonic function $H\geq0$ in $B_{2}\cap\Omega$ such that it vanishes at $\Gamma$ and satisfies \eqref{eq. H is larger than 2/(1+gamma')}. Then there exist positive constants $C,\epsilon>0$ with
    \begin{equation*}
        C=C(n,L,\gamma,Cone_{\Sigma},\lambda,\Lambda,\|u\|_{L^{\infty}(B_{1}\cap\Omega)}),\quad\epsilon=\epsilon(n,L,\gamma,Cone_{\Sigma}),
    \end{equation*}
    and a harmonic function
    \begin{equation*}
        h(X)=\mathcal{A}\cdot H(X)\mbox{ for some constant }C^{-1}\leq\mathcal{A}\leq C,
    \end{equation*}
    such that for $X\in B_{1/2}\cap\Omega$,
    \begin{equation*}
        h(X)\geq C^{-1}H(X)\geq C^{-1}|x_{n}-g(x')|^{\frac{2}{1+\gamma'}},
    \end{equation*}
    and
    \begin{equation*}
        |u(X)-h(X)|\leq C|X|^{\epsilon}\Big(|X|^{\frac{2}{1+\gamma'}}+H(X)\Big).
    \end{equation*}
\end{theorem}
\begin{remark}
    In fact, Theorem~\ref{thm. Schauder-Campanato} directly implies that
    \begin{equation*}
        \frac{u(X)}{H(X)}=\mathcal{A}+O(|X|^{\epsilon})\quad\mbox{near the origin}.
    \end{equation*}
    When in particular $\gamma<1$ and $\partial\Omega\in C^{1,DMO}$, then the estimate \eqref{BBS} follows immediately from the Hopf-Oleinik boundary point lemma (see \cite{DJV24,RSS23}).
\end{remark}
The reason why Theorem~\ref{thm. Schauder-Campanato} is called "Schauder" is partly because it becomes the $C^{1,\epsilon}$ boundary estimate if $\Gamma$ is smooth. But more fundamentally, the boundary Harnack principle for linear equations (like \cite{K72}) is to Theorem~\ref{thm. Schauder-Campanato}, as the Green function is to the standard $C^{2,\alpha}$ Schauder estimate. Recently, we have learned that in \cite{DS25}, De Silva and Savin have used a similar method to study the boundary Harnack principle for a more general class of linear equations in the form $tr(A(x)D^{2}u) + b(x)\cdot\nabla u+c(x)u=0$.

\subsection{Organization of the paper}
In Section~\ref{sec. preliminaries} we discuss the well-posedness of the "SLEF" and prove Theorem~\ref{thm. well-posedness}. In Section~\ref{sec. comparison between solutions} we prove Theorem~\ref{thm. Harnack Comparable, general Lipschitz domain}, while leaving the proof of Theorem~\ref{thm. growth rate} and Theorem~\ref{thm. growth rate, (b) improved} in Section~\ref{sec. growth rate estimate}. In Section~\ref{sec. continuity of the ratio u/v} we prove Theorem~\ref{thm. limit is 1}, Theorem~\ref{thm. limit is continuous when obtuse}, and Theorem~\ref{thm. Schauder-Campanato}. In Section~\ref{sec. examples}, we provide some examples to help the readers understand the main theorems better, and we construct one more example in order to prove Theorem~\ref{thm. ratio not continuous}.

\section{Preliminaries}\label{sec. preliminaries}
\subsection{Basic maximal principles}
We first present the maximal principle  for the following problem 
\begin{equation}\label{eq. main1}
    \left\{\begin{aligned}
        &-\Delta u=f(X)\cdot u^{-\gamma}&\mbox{ in }&\Omega\\
        &u=\varphi&\mbox{ on }&\partial\Omega
    \end{aligned}
    \right.,
\end{equation}
 where $\Omega$ is a bounded Lipschitz domain, $0<\lambda\leq f(X)\leq\Lambda$ in $\Omega$.
\begin{lemma}[maximal principle]\label{lem. maximal principle}
Assume that $u$ and $v$ are classical super-solution and sub-solution 
 of \eqref{eq. main1}, respectively, that is, $u,v\in C^2(\Omega)\cap C(\overline{\Omega})$ satisfy:
 \begin{equation}\label{eq. main supersolution}
    \left\{\begin{aligned}
        &-\Delta u\geq f(X)\cdot 
        u^{-\gamma}&\mbox{ in }&\Omega\\
        &u\geq\varphi&\mbox{ on }&\partial\Omega
    \end{aligned}
    \right.,
\end{equation}
and
\begin{equation}\label{eq. main subsolution}
    \left\{\begin{aligned}
        &-\Delta v\leq f(X)\cdot v^{-\gamma}&\mbox{ in }&\Omega\\
        &v\leq\varphi&\mbox{ on }&\partial\Omega
    \end{aligned}
    \right..
\end{equation}
Then 
\[u\geq v \ \mbox{in}\ \Omega.\]
\end{lemma}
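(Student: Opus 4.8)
The plan is to use the standard comparison argument for the singular operator, exploiting the fact that $t \mapsto f(X) t^{-\gamma}$ is strictly decreasing in $t > 0$, so that the nonlinearity has the "good" sign for a maximum principle. Suppose, for contradiction, that the open set $\mathcal{O} := \{X \in \Omega : v(X) > u(X)\}$ is nonempty. Since $u, v \in C(\overline{\Omega})$ and $v \leq \varphi \leq u$ on $\partial\Omega$, we have $v = u$ on $\partial\mathcal{O}$ (more precisely $\partial\mathcal{O} \subseteq \{v = u\} \cup \partial\Omega$, and on the portion of $\partial\mathcal{O}$ lying on $\partial\Omega$ we still have $v \le \varphi \le u$; so $v - u \le 0$ on all of $\partial\mathcal{O}$, with equality on $\partial\mathcal{O} \cap \Omega$). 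On $\mathcal{O}$ we have $v > u > 0$, hence both $u$ and $v$ stay bounded away from $0$ on compact subsets, and crucially $v^{-\gamma} < u^{-\gamma}$ pointwise on $\mathcal{O}$.

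Next I would combine the two differential inequalities on $\mathcal{O}$: there
\begin{equation}
    -\Delta(v - u) \leq f(X) v^{-\gamma} - f(X) u^{-\gamma} = f(X)\big(v^{-\gamma} - u^{-\gamma}\big) < 0 ,
\end{equation}
so $w := v - u$ is a (classical, since $u,v \in C^2$) subsolution of $-\Delta w \le 0$, i.e. $w$ is subharmonic on $\mathcal{O}$. By the classical weak maximum principle for subharmonic functions on the bounded open set $\mathcal{O}$, $w$ attains its maximum on $\partial\mathcal{O}$, where $w \le 0$; hence $w \le 0$ throughout $\mathcal{O}$, contradicting $w > 0$ on $\mathcal{O}$. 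Therefore $\mathcal{O} = \emptyset$, which is exactly $u \geq v$ in $\Omega$.

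One technical point that deserves care — and what I expect to be the only real obstacle — is the regularity/boundedness needed to apply the maximum principle on $\mathcal{O}$, since a priori $\mathcal{O}$ is merely a bounded open subset of a Lipschitz domain and $w$ is only known to be continuous up to $\partial\Omega$, not up to $\partial\mathcal{O} \cap \partial\Omega$ in any quantitative sense. This is handled by noting that $w \in C(\overline{\mathcal{O}})$ (as the restriction of a function in $C(\overline{\Omega})$), that $\overline{\mathcal{O}}$ is compact, and that $-\Delta w \le 0$ holds classically in the interior $\mathcal{O}$; the weak maximum principle for continuous subsolutions on a bounded open set requires nothing more. If one prefers to avoid even discussing $\partial\mathcal{O}$, an alternative is the perturbation trick: fix $\varepsilon > 0$, consider $w_\varepsilon := v - u - \varepsilon$ (or work with $(v-u-\varepsilon)^+$), observe its positivity set $\mathcal{O}_\varepsilon$ is compactly contained in $\Omega$ since $w < 0$ near $\partial\Omega$ by continuity, apply the maximum principle there to get $v - u \le \varepsilon$, and let $\varepsilon \to 0$. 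Either route gives the claim; I would present the direct version for brevity and remark that strict monotonicity of $u \mapsto u^{-\gamma}$ is the structural feature making everything work.
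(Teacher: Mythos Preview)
Your proof is correct and follows essentially the same comparison argument as the paper: both exploit the strict monotonicity of $t\mapsto t^{-\gamma}$ to derive a sign contradiction on the set where $v>u$. The only cosmetic difference is that the paper argues at a single interior minimum point of $u-v$ via the second derivative test, whereas you apply the weak maximum principle to the subharmonic function $v-u$ on its positivity set; both routes are standard and equivalent here.
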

\begin{proof}
 We proceed by contradiction. Suppose the conclusion does not hold, then there exists a point  $X_0\in\Omega$ such that $$(u-v)(X_0)=\min\limits_{X\in \Omega}(u-v)(X)<0.$$
  Consequently, from the second derivative test we have \begin{equation}\label{cnt} -\Delta (u-v)(X_0)\leq 0.\end{equation}
  In addition, by \eqref{eq. main supersolution} and \eqref{eq. main subsolution}, we obtain
  \begin{equation*}
    -\Delta (u-v)(X_0)\geq f(X)\cdot (u(X_0)^{-\gamma}-v(X_0)^{-\gamma})>0,
\end{equation*}
this contradicts \eqref{cnt}. 

Therefore, we conclude that $u\geq v\ \ \mbox{in}\  \Omega,$ and this completes the proof.
\end{proof}

The following lemma implies that a solution to the "SLEF" is non-degenerate.
\begin{lemma}[interior lower bound]\label{lem. non degenerate}
    Assume that $u\geq0$ satisfies
    \begin{equation*}
        -\Delta u\geq\lambda\cdot u^{-\gamma}\mbox{ in }B_{r},
    \end{equation*}
    then
    \begin{equation*}
        u(0)\geq c(n,\gamma,\lambda)r^{\frac{2}{1+\gamma}}.
    \end{equation*}
\end{lemma}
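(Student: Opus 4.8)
The plan is to bound \(u\) from below by an explicit barrier that solves a scaled version of the equation, and then invoke the comparison principle from Lemma~\ref{lem. maximal principle}. The barrier should respect the scaling \(u(X)\mapsto\rho^{-\frac{2}{1+\gamma}}u(\rho X)\), under which the SLEF is invariant (up to replacing \(f\) by \(f(\rho\,\cdot)\), which still lies in \([\lambda,\Lambda]\)). Accordingly, I would look for a subsolution of the form
\[
v_{\rho}(X):=c\,\rho^{\frac{2}{1+\gamma}-2}\bigl(\rho^{2}-|X|^{2}\bigr)\qquad\text{on }B_{\rho},\quad\rho<r,
\]
which is positive in \(B_{\rho}\), vanishes on \(\partial B_{\rho}\), and in which \(c=c(n,\gamma,\lambda)>0\) is a small constant to be fixed.

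First I would carry out the (routine) verification that \(v_{\rho}\) is a classical subsolution on \(B_{\rho}\) with zero boundary data. One has \(-\Delta v_{\rho}=2nc\,\rho^{\frac{2}{1+\gamma}-2}\), while, using \(\rho^{2}-|X|^{2}\le\rho^{2}\), one gets \(\lambda v_{\rho}^{-\gamma}\ge\lambda c^{-\gamma}\rho^{\frac{2}{1+\gamma}-2}\); the exponents of \(\rho\) match exactly, which is the whole point of the scaling. Hence choosing \(c\) so that \(2nc\le\lambda c^{-\gamma}\), e.g. \(c=\bigl(\tfrac{\lambda}{2n}\bigr)^{\frac1{1+\gamma}}\), forces \(-\Delta v_{\rho}\le\lambda v_{\rho}^{-\gamma}\le f(X)\,v_{\rho}^{-\gamma}\) in \(B_{\rho}\).

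Second, since \(u\) is a classical supersolution on \(B_{r}\) it is in particular strictly positive there (a classical solution of \(-\Delta u\ge f u^{-\gamma}\) cannot attain the value \(0\)), it is continuous on \(\overline{B_{\rho}}\subset B_{r}\), and on \(\partial B_{\rho}\) we have \(u\ge0=v_{\rho}\). Applying Lemma~\ref{lem. maximal principle} on \(B_{\rho}\) with \(\varphi\equiv0\) yields \(u\ge v_{\rho}\) in \(B_{\rho}\); evaluating at the origin gives \(u(0)\ge v_{\rho}(0)=c\,\rho^{\frac{2}{1+\gamma}}\). Letting \(\rho\uparrow r\) then produces \(u(0)\ge c\,r^{\frac{2}{1+\gamma}}\) with \(c=c(n,\gamma,\lambda)\), as claimed.

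I do not expect a serious obstacle: this is a standard barrier/comparison argument. The only delicate points are the bookkeeping of the powers of \(\rho\) (which is forced precisely by scale invariance) and checking that Lemma~\ref{lem. maximal principle} is legitimately applicable — namely that \(u>0\) so that \(u^{-\gamma}\) is meaningful at a putative interior minimum of \(u-v_{\rho}\), and that one must work on \(B_{\rho}\) for \(\rho<r\) and pass to the limit, since \(u\) is only assumed to be a classical solution inside \(B_{r}\), not necessarily continuous up to \(\partial B_{r}\).
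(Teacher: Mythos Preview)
Your argument is correct and follows essentially the same route as the paper: build a quadratic subsolution vanishing on the boundary of a ball, verify \(-\Delta v\le\lambda v^{-\gamma}\le f\,v^{-\gamma}\) by a direct computation, and apply Lemma~\ref{lem. maximal principle}. The only cosmetic differences are that the paper first rescales to \(r=1\) and uses the barrier \(v=\bigl((2n/\lambda)^{-1/\gamma}-|X|^{2}\bigr)_{+}\) (yielding the constant \((2n/\lambda)^{-1/\gamma}\)), whereas you keep the scale explicit and work on \(B_{\rho}\), \(\rho<r\), before letting \(\rho\uparrow r\); your extra care about continuity up to \(\partial B_{r}\) is a nice touch but not a different idea.
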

\begin{proof}
    Without loss of generality, we assume $r=1$ using the invariant rescale
    \begin{equation*}
        \widetilde{u}(Y)=r^{-\frac{2}{1+\gamma}}u(rY),\quad Y\in B_{1}.
    \end{equation*}
    To show $u(0)\geq0$, we construct a barrier function
    \begin{equation*}
        v=\Big((\frac{2n}{\lambda})^{-1/\gamma}-|X|^{2}\Big)_{+}.
    \end{equation*}
    We see that
    \begin{equation*}
        -\Delta v\leq2n\leq \lambda v^{-\gamma}\mbox{ in }B_{1},
    \end{equation*}
    and $v=0\leq u$ on $\partial B_{1}$. Therefore,
    \begin{equation*}
        u(0)\geq(\frac{2n}{\lambda})^{-1/\gamma}=:c(n,\gamma,\lambda)
    \end{equation*}
    as the equation $-\Delta u=\lambda\cdot u^{-\gamma}$ has the maximal principle, as shown in Lemma~\ref{lem. maximal principle}.
\end{proof}
Its direct corollary is the following interior Harnack principle.
\begin{corollary}[interior Harnack principle]\label{cor. interior Harnack}
    Assume that $u\geq0$ satisfies
    \begin{equation*}
        -\Delta u=f(X)\cdot u^{-\gamma}\mbox{ in }B_{r}
    \end{equation*}
    with $f(X)\in[\lambda,\Lambda]$, then
    \begin{equation*}
        \max_{B_{r/2}}u\leq C(n,\gamma,\lambda,\Lambda)\min_{B_{r/2}}u.
    \end{equation*}
\end{corollary}
\begin{proof}
    We first apply Lemma~\ref{lem. non degenerate} and obtain a lower bound of $u$ in $B_{0.9r}$. Such a lower bound also implies an upper bound of $f(X)u^{-\gamma}$. By the standard Harnack principle,
    \begin{equation*}
        \max_{B_{r/2}}u\leq C\Big(\min_{B_{r/2}}u+r^{2}\Lambda(\min_{B_{0.9r}}u)^{-\gamma}\Big)\leq C\Big(\min_{B_{r/2}}u+r^{\frac{2}{1+\gamma}}\Big).
    \end{equation*}
    Moreover, the last term $r^{\frac{2}{1+\gamma}}$ can also be absorbed into $\ds\min_{B_{r/2}}u$ using Lemma~\ref{lem. non degenerate}.
\end{proof}
\subsection{Proof of Theorem~\ref{thm. well-posedness}}
The uniqueness of the problem \eqref{eq. main1} can be  obtained from Lemma \ref{lem. maximal principle}. Next, we provide the detailed proof of the existence of \eqref{eq. main1}.
  \begin{itemize}
        \item Step 1: We show that there exists a classical sub-solution $h_{\varphi}$ for \eqref{eq. main1}.
        
        Assume that $h_\varphi$ is a harmonic replacement of \eqref{eq. main1}, that is,
    \begin{equation*}
    \left\{\begin{aligned}
        &-\Delta h_{\varphi}=0&\mbox{ in }&\Omega\\
    &h_{\varphi}=\varphi&\mbox{ on }&\partial\Omega
    \end{aligned}
    \right..
\end{equation*}
 where   $\Omega$ is a bounded  Lipschitz domain.   
 
 Then, $h_\varphi\in C^2(\Omega)\cap C(\overline{\Omega})$ is a classical sub-solution of \eqref{eq. main1}.
  \medskip
  
\item Step 2: Under the condition \(\min\limits_{X \in \partial \Omega} \varphi(X) =: m > 0\), we show the existence of  a classical solution to problem \eqref{eq. main1} via the  the continuity method.

 \medskip
 
 Consider a family of parameterized problems 
\begin{equation}\label{ut}
    \left\{\begin{aligned}
        &-\Delta u_t(X)=tf(X)u_t^{-\gamma}(X)&\mbox{ in }&\Omega\\
    &u_t=\varphi&\mbox{ on }&\partial\Omega
    \end{aligned}
    \right.,
\end{equation}
with $t\in[0,1].$ Define \begin{equation*}
    S:=\{t\in[0,1], \mbox{ the problem }\eqref{ut}\mbox{ has a classical solution}\}.
\end{equation*}

Clearly, the harmonic function $h_{\varphi}$ is a classical solution of \eqref{ut} with $t=0$. Hence $0\in S$, ensuring $S\neq \emptyset$.

By Step 1 and that $u_{t}$ is super-harmonic,
\[u_t(X)\geq h_{\varphi}(X)\geq m>0, \ \forall\ X\in \overline{\Omega},\ \forall t\in [0,1].\]
Therefore,  
the nonlinear term $tf(X)u_t(X)^{-\gamma}$ in \eqref{ut} is nonnegative and uniformly bounded in $\Omega$.

Since $f\in C^{\alpha}_{loc}(\Omega)$, the classical elliptic regular theory implies that
\begin{equation*}
    u_t\in C_{loc}^{2,\alpha}(\Omega)\cap C(\overline{\Omega})\mbox{ for each }t\in[0,1].
\end{equation*}
Precisely, there exist universal constants $\epsilon$, $C_{1}$ and $C_{2}(\Omega')$ (for every $\overline{\Omega'}\subseteq\Omega$) independent of $t$ such that
\begin{equation*}
    \|u_t-h_{\varphi}\|_{C^{\epsilon}_{0}(\overline{\Omega})}\leq C_{1},\ \|u_t\|_{C^{2,\alpha}(\Omega')}\leq C_{2}(\Omega'),\ \forall t\in [0,1].
\end{equation*}
Then one can easily verify that $S$ is both open and closed. 

Thus, by  the  the continuity method, we derive that $S=[0,1].$ In particular, when $t=1$, the original problem  \eqref{eq. main1} admits a classical solution.
\medskip

\item Step 3: When \(\min\limits_{X \in \partial \Omega} \varphi(X) =: m = 0\), we establish the existence of  classical solution to problem \eqref{eq. main1}. The proof proceeds as follows:

Consider the regularized problem:
\begin{equation}\label{ue}
    \left\{\begin{aligned}
        &-\Delta u_{\varepsilon}(X)=f(X)u_{\varepsilon}^{-\gamma}(X)&\mbox{ in }&\Omega\\
    &u_{\varepsilon}=\varphi+{\varepsilon}&\mbox{ on }&\partial\Omega
    \end{aligned}
    \right.,
\end{equation}
here $\varepsilon>0.$ From Step 2, we conclude that for each $\varepsilon>0$, the problem \eqref{ue} admits a classical solution $u_{\varepsilon}\in C^{2,\alpha}_{loc}(\Omega)\cap C(\overline{\Omega}).$ The local $C^{2,\alpha}$ norm is independent of $\epsilon$ for each fixed interior ball, due to Lemma~\ref{lem. non degenerate}.

By applying the maximal principle (Lemma \ref{lem. maximal principle}),  the sequence $\{u_{\varepsilon}\}$ is monotone  decreasing as ${\varepsilon} \searrow 0$, and satisfies
 \[u_{\varepsilon}(X)\geq h_{\varphi}(X),\ \forall X\in \overline{\Omega}.\] where 
$h_{\varphi}$ is the harmonic function with boundary data ${\varphi}.$

Now let $\varepsilon \to 0$, the monotone convergence theorem implies that 
$u_{\varepsilon}$ converges pointwise everywhere to a function 
$u_0\in C^{2,\alpha}_{loc}(\Omega)$, which satisfies
\begin{equation*}
    -\Delta u_{0}(X)=f(X)u_{0}^{-\gamma}(X)\mbox{ in }\Omega.
\end{equation*}
On the one hand, it can be easily seen that
\begin{equation*}
    \liminf_{X\to Y}u_{0}(X)\geq\varphi(Y),\quad\mbox{for every }Y\in\partial\Omega.
\end{equation*}
On the other hand, for each fixed $Y\in\partial\Omega$ and $\epsilon>0$, by the continuity of $u_{\epsilon/2}(X)$, we then have the existence of $\delta>0$ such that
\begin{equation*}
    \Big|u_{\epsilon/2}(X)-\big(\varphi(Y)+\frac{\epsilon}{2}\big)\Big|\leq\frac{\epsilon}{2},\quad\mbox{if }|X-Y|\leq\delta.
\end{equation*}
Then, for $|X-Y|\leq\delta$, we have
\begin{equation*}
    u_{0}(X)-\varphi(Y)\leq u_{\epsilon/2}(X)-\varphi(Y)\leq\epsilon,
\end{equation*}
and thus have also verified the boundary condition:
\begin{equation*}
    u_{0}=\varphi\mbox{ on }\partial\Omega.
\end{equation*}
Thus we established the existence of a classical solution to problem \eqref{eq. main1}.
\end{itemize}
This completes the proof of Theorem \ref{thm. well-posedness}.

\section{Comparison between solutions}\label{sec. comparison between solutions}
In this section, our goal is to prove Theorem~\ref{thm. Harnack Comparable, general Lipschitz domain}. The proof is robust and can be applied to more general uniformly elliptic operators, such as $\mathrm{div}(A\nabla u)$ or $\mathrm{tr}(A\cdot D^{2}u)$.

The key lemma below provides a weaker criterion for a function to be positive. The method is inspired by De Silva and Savin \cite{DS20}.
\begin{lemma}\label{lem. ensure positive}
    Let $\Gamma$ be a Lipschitz graph such that $[g]_{C^{0,1}}\leq L$. There exist constants $(M,\delta)$ depending only on $(n,L,C_{1})$ such that if the following holds:
    \begin{itemize}
        \item[(a)] $\Delta w\leq\frac{C_{1}}{dist(X,\Gamma)^{2}}\max\{w,0\}$ and $w\geq-1$ in the interior of $\mathcal{GC}_{r}$;
        \item[(b)] $w\geq M$ in $\mathcal{SC}_{r}=\mathcal{SC}_{r,\delta}(0)$;
        \item[(c)] $w=0$ on $\Gamma$, in the continuous sense or the trace sense.
    \end{itemize}
    Then it is guaranteed that $w\geq0$ on the vertical segment $\mathcal{GC}_{r}\cap\{x'=0\}$.
\end{lemma}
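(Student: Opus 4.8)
The plan is to prove this by a barrier argument: construct an explicit superharmonic competitor that is negative only on a thin "slab" near the graph $\Gamma$, is at most $M$ on $\mathcal{SC}_{r}$, and is at least $-1$ everywhere, and then invoke the maximum principle for the harmonic function $w$. By scaling we may assume $r=1$. First I would set up coordinates so that the graph $\Gamma=\{x_n=g(x')\}$ with $[g]_{C^{0,1}}\le L$ is sandwiched between two parallel Lipschitz cones; the relevant geometric fact is that the region $\{0\le x_n-g(x')\le \delta\}\cap\{|x'|\le 1\}$ is contained in an $O(\delta)$-neighborhood of $\Gamma$. The natural barrier is built from the distance-like function $s(X):=x_n-g(x')$ (or a smoothed version), which is comparable to $d(X,\Gamma)$ with constants depending only on $L$; I want a function of the form $\psi(X)=A\,\eta(s(X)) - B\,(1-|x'|^2)$ type, or more robustly, the solution to a model obstacle, but since $g$ is only Lipschitz it is cleaner to compare against a \emph{linear} upper and lower cone. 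So I would replace $g$ by $g^+(x')=L|x'|$ (which lies above $g$ after recentering) and by $g^-(x')=-L|x'|$, work in the slightly larger Lipschitz domain $\{x_n>-L|x'|\}$, and use its explicit positive harmonic function vanishing on the cone — essentially the $H_\Sigma$ of the excerpt with the homogeneity exponent $\phi=\phi_{L}$ of the cone $\{x_n>-L|x'|\}\cap B_1$.

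Concretely, let $P(X)$ be the positive harmonic function in the cone $\mathcal{C}_L:=\{x_n>-L|x'|\}$, homogeneous of degree $\phi_L>0$, normalized so that $P\le 1$ on $\mathcal{C}_L\cap B_1$ and $P\ge c_0(n,L)>0$ on $\mathcal{C}_L\cap\{x_n\ge \delta\}\cap\{|x'|\le 1/2\}$ (such $c_0$ exists by the Harnack inequality along the axis since $P>0$ in the open cone). Then consider the competitor
\begin{equation}
    \Phi(X):= M\cdot \frac{P(X)}{c_0} - 1.
\end{equation}
This $\Phi$ is harmonic in $\mathcal{GC}_1\subseteq \mathcal{C}_L\cap B_1$, satisfies $\Phi\ge -1$ on the bottom graph $\Gamma$ (since $P\ge 0$), satisfies $\Phi\le M/c_0$ generally but more importantly $\Phi\le M$ on $\mathcal{SC}_1$ only if we first choose the \emph{threshold} $M$ correctly — so instead I reverse the logic: the hypothesis gives $w\ge M$ on $\mathcal{SC}_1$ and $w\ge -1$ on $\mathcal{GC}_1$ and $w=0\ge \Phi$ on the part of $\partial\mathcal{GC}_1$ lying on $\Gamma$ (choosing $\Phi\le 0$ there, automatic), and $w\ge M \ge \Phi$ on $\mathcal{SC}_1$ provided $M/c_0\cdot \max P \le M$, i.e. provided $c_0$... this forces the right normalization. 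The clean statement: pick $M=M(n,L)$ so large that $\tfrac{M}{c_0}P(X)-1 \le M$ fails nowhere relevant — actually choose $\Phi(X):=\tfrac{M}{c_0'}\,P(X)-1$ where $c_0'=\min_{\mathcal{SC}_{1}} P>0$; then on $\mathcal{SC}_1$ we need $\tfrac{M}{c_0'}P - 1\le w$, which holds since $\tfrac{M}{c_0'}P-1\le \tfrac{M}{c_0'}c_0'-1+(\text{osc})\le M-1+O_L(1)\le w$ after enlarging $M$ to absorb the oscillation of $P$ on $\mathcal{SC}_1$. On the lateral and lower boundary pieces $\Phi\le -1 + \tfrac{M}{c_0'}\sup P$; here I instead multiply $P$ by a cutoff or, more simply, localize: restrict attention to $\{|x'|\le 1/2\}$ by noting the vertical segment $\{x'=0\}$ is in the interior, and run the comparison on the half-size cylinder $\mathcal{GC}_{1/2}$ whose lateral boundary sits inside $\mathcal{SC}_{1}\cup\Gamma$ after adjusting $\delta$. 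With $\Phi\le w$ on all of $\partial\mathcal{GC}_{1/2}$ and both functions super-/harmonic, Lemma~\ref{lem. maximal principle}'s maximum principle (or the classical one for $-\Delta\ge 0$) yields $w\ge \Phi$ inside, and since $\Phi(te_n)=\tfrac{M}{c_0'}t^{\phi_L}-1\ge 0$ for... no — $\Phi$ on the axis is positive only away from the vertex. Thus the conclusion $w\ge 0$ on $\mathcal{GC}_{1}\cap\{x'=0\}$ will require that $\Phi\ge 0$ on that segment, which $P(te_n)>0$ does give once $t$ is bounded below, and for $t$ near $0$ one uses instead that $w\ge -1$ combined with a Hopf/boundary-point iteration, or a dyadic rescaling reducing small $t$ to the unit scale.

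I expect the \textbf{main obstacle} to be controlling the barrier near the \emph{vertex} of the cone (equivalently near the point $0\in\Gamma$ where $g(0)=0$): a single homogeneous barrier $P\sim |X|^{\phi_L}$ tends to $0$ at the origin, so it cannot directly force $w\ge 0$ at points of the segment close to $0$. The fix is a scaling/iteration: apply the unit-scale statement at every dyadic scale $2^{-k}$ (the hypotheses $-\Delta w\ge 0$, $w\ge -1$, $w=0$ on $\Gamma$ are scale-invariant, and $w\ge M$ on $\mathcal{SC}_{2^{-k}}$ follows from the previous step's conclusion $w\ge 0$ on the larger grounded cylinder together with an interior Harnack inequality pushing $w$ up to a definite multiple of its sup, or from a De Silva–Savin–type improvement-of-positivity), thereby propagating positivity all the way down the segment to the vertex. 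Making this dyadic bootstrap self-consistent — i.e. ensuring the constant $M$ regenerates at each scale and does not blow up — is the delicate point, and it is exactly where the choice of $(M,\delta)$ depending only on $(n,L)$ gets pinned down. A secondary technical nuisance is that $g$ is merely Lipschitz, so I avoid ever differentiating $g$ and work throughout with the comparison cones $\{x_n\gtrless \pm L|x'|\}$ and their explicit harmonic functions, whose homogeneity $\phi_L$ and normalizing constants depend only on $n$ and $L$.
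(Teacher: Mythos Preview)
Your overall instinct---compare against a cone barrier at unit scale, then iterate dyadically toward the vertex---is correct, and you have correctly identified the main obstacle: a single homogeneous barrier $P\sim|X|^{\phi_L}$ vanishes at the origin, so one pass cannot force $w\ge 0$ all the way down the segment. The gap is in how you propose to close the iteration. You write that at scale $2^{-k}$ the hypothesis $w\ge M$ on $\mathcal{SC}_{2^{-k}}$ ``follows from the previous step's conclusion $w\ge 0$ on the larger grounded cylinder together with an interior Harnack inequality.'' But the previous step does \emph{not} give $w\ge 0$ on the larger grounded cylinder---it gives it only on the axis segment---and in any case Harnack applied to a nonnegative superharmonic function yields a lower bound in terms of its value at one point, not the fixed large constant $M$. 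If you try to push your barrier $\Phi=\tfrac{M}{c_0'}P-1$ down to the next scale, on $\mathcal{SC}_{r_0}$ it only gives $w\ge \tfrac{M}{c_0'}\cdot O(r_0^{\phi_L})-1$, which is strictly smaller than $M$; the constants deteriorate geometrically and the scheme collapses.

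The paper's fix is to run a \emph{paired} induction: at scale $2^{-k}$ one tracks simultaneously $(P_k)$ $w\ge Ma^k$ on $\mathcal{SC}_{2^{-k}r}$ and $(Q_k)$ $w\ge -a^k$ on $\mathcal{GC}_{2^{-k}r}$, for some $a\in(0,1)$. The ratio between the two bounds stays equal to $M$ at every scale, so after rescaling by $a^{-k}$ you are back in the unit-scale picture. The step $(P_k)\Rightarrow(P_{k+1})$ is a Harnack-chain/barrier argument of the kind you sketched. The step you are missing is $(Q_k)\Rightarrow(Q_{k+1})$: extend $w$ by zero below $\Gamma$, observe that $w^-:=\max\{-w,0\}$ is subharmonic on the doubled cylinder $\mathcal{DC}_{2^{-k}r}$, and---crucially---$\mathrm{supp}\,w^-$ lies in the thin slab $\{0\le x_n-g(x')\le 2^{-k}\delta r\}$ because $w\ge 0$ on $\mathcal{SC}_{2^{-k}r}$ by $(P_k)$. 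A local maximum (De Giorgi--Moser) estimate then gives $\sup w^-\le C(n,L)\sqrt{\delta}\cdot a^k\le a^{k+1}$ once $\delta$ is small. This is how $\delta$ enters and how the negative part of $w$ is forced to decay at the same geometric rate as the positive lower bound on the suspended cylinder; without this mechanism the constants cannot be made to regenerate.
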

\begin{proof}
    It suffices to prove inductively the following two statements:
    \begin{itemize}
        \item[($P_{i}$):] $w\geq Ma^{i}$ in $\mathcal{SC}_{2^{-i}r}$;
        \item[($Q_{i}$):] $w\geq-a^{i}$ in $\mathcal{GC}_{2^{-i}r}$.
    \end{itemize}
    The choice of $(M,a,\delta)$ will be given at the end of the proof. Automatically we have ($P_{0}$) and ($Q_{0}$). Now we assume that ($P_{k}$) and ($Q_{k}$) are true for some $k\geq0$.

    Let's first prove ($P_{k+1}$). Pick an arbitrary point $X\in\mathcal{SC}_{2^{-k-1}r}\setminus\mathcal{SC}_{2^{-k}r}$ and write
        \begin{equation*}
            X=(x',g(x')+t),\quad x'\in B_{2^{-k-1}r}',\quad t\in[2^{-k-1}\delta r,2^{-k}\delta r].
        \end{equation*}
        We insert $p_{1},\cdots,p_{N}$ between $p_{0}=(x',g(x')+4t)$ and $p_{N+1}=X$ such that:
        \begin{itemize}
            \item[(a)] $p_{j}=(x',g(x')+t_{j})$ for $0\leq j\leq N+1$;
            \item[(b)] $t_{j}\geq t_{j+1}$ and $\ds(1-\frac{3}{4\sqrt{L^{2}+1}})t_{j}\leq(1-\frac{1}{4\sqrt{L^{2}+1}})t_{j+1}$ for $0\leq j\leq N$;
            \item[(c)] $N$ has a uniform upper bound depending only on $L$.
        \end{itemize}
        It is worth mentioning that it is inferred from (b) that
        \begin{equation*}
            B_{\frac{t_{j+1}}{4\sqrt{L^{2}+1}}}(p_{j+1})\subseteq B_{\frac{3t_{j}}{4\sqrt{L^{2}+1}}}(p_{j})\subseteq B_{\frac{t_{j}}{\sqrt{L^{2}+1}}}(p_{j})\subseteq\mathcal{GC}_{2^{-k}r},
        \end{equation*}
        and that
        \begin{equation*}
            B_{\frac{t_{0}}{4\sqrt{L^{2}+1}}}(p_{0})\subseteq\mathcal{SC}_{2^{-k}r}.
        \end{equation*}
        We denote
        \begin{equation*}
            m_{j}=\min\{w(Y):Y\in B_{\frac{t_{j}}{4\sqrt{L^{2}+1}}}(p_{j})\},
        \end{equation*}
        then we first have $m_{0}\geq Ma^{k}$ by the induction hypothesis ($P_{k}$). Assume that $m_{j}$ is known for some $0\leq j\leq N$, then we construct a barrier
        \begin{equation*}
            \beta_{j}(Y)=\max\Big\{\frac{m_{j}+a^{k}}{4^{q}-1}\Big(\frac{t_{j}/\sqrt{L^{2}+1}}{|y-p_{j}|}\Big)^{q}-\frac{m_{j}+4^{q}a^{k}}{4^{q}-1},-a^{k}\Big\},\quad\mbox{for some }q=q(n,L,C_{1}).
        \end{equation*}
        It turns out that as long as $q$ is sufficiently large,
        \begin{equation*}
            \Delta\beta_{j}\geq\frac{C_{1}}{dist(X,\Gamma)^{2}}\max\{\beta_{j},0\}\quad\mbox{in the annulus }B_{\frac{t_{j}}{\sqrt{L^{2}+1}}}(p_{j})\setminus B_{\frac{t_{j}}{4\sqrt{L^{2}+1}}}(p_{j}).
        \end{equation*}
        Besides,
        \begin{equation*}
            \beta_{j}=m_{j}\mbox{ on }\partial B_{\frac{t_{j}}{4\sqrt{L^{2}+1}}}(p_{j}),\quad\beta_{j}=-a^{k}\mbox{ on }\partial B_{\frac{t_{j}}{\sqrt{L^{2}+1}}}(p_{j}).
        \end{equation*}
        By maximal principle, we see that
        \begin{equation*}
            m_{j+1}\geq\min\{\beta_{j}(Y):Y\in B_{\frac{3t_{j}}{4\sqrt{L^{2}+1}}}(p_{j})\}\geq\frac{(4/3)^{q}-1}{4^{q}-1}m_{j}-\frac{4^{q}-(4/3)^{q}}{4^{q}-1}a^{k},
        \end{equation*}
        or in other words,
        \begin{equation*}
            m_{j+1}+a^{k}\geq\frac{(4/3)^{q}-1}{4^{q}-1}(m_{j}+a^{k}).
        \end{equation*}
        We will require
        \begin{equation}\label{eq. requirement 1 on M and a}
            \frac{Ma+1}{M+1}\leq\Big(\frac{(4/3)^{q}-1}{4^{q}-1}\Big)^{N+1},
        \end{equation}
        so that we have $w(X)\geq m_{N+1}\geq Ma^{k+1}$.

        We next show ($Q_{k+1}$). We extend $w$ trivially below the graph $\Gamma$ and consider
        \begin{equation*}
            w^{-}(Y):=\max\{-w(Y),0\}.
        \end{equation*}
        As $\Delta w\leq0$ in the interior of $\mathcal{GC}_{2^{-k}r}$, we see $\Delta w^{-}\geq0$ in the doubled cylinder
        \begin{equation*}
            \mathcal{DC}_{2^{-k}r}=\{(y',y_{n}):\ |y'|\leq2^{-k}r,\ |y_{n}-g(y')|\leq2^{-k}r\}.
        \end{equation*}
        We apply the De Giorgi - Nash - Moser estimate (or the weak Harnack principle) to $w^{-}$ and obtain
        \begin{equation*}
            \max_{\mathcal{GC}_{2^{-k-1}}}w^{-}\leq\frac{C(n,L)}{(2^{-k}r)^{n/2}}\|w^{-}\|_{L^{2}(\mathcal{DC}_{2^{-k}r})}\leq\frac{C(n,L)}{(2^{-k}r)^{n/2}}\|w^{-}\|_{L^{\infty}(\mathcal{DC}_{2^{-k}r})}|supp(w^{-})|^{1/2}.
        \end{equation*}
        Notice that $\|w^{-}\|_{L^{\infty}(\mathcal{DC}_{2^{-k}r})}\leq a^{k}$ by the assumption ($Q_{k}$), and
        \begin{equation*}
            supp(w^{-})\subseteq\{(y',y_{n}):\ |y'|\leq2^{-k}r,\ g(y')\leq y_{n}\leq g(y')+2^{-k}\delta r\}.
        \end{equation*}
        Therefore, we require
        \begin{equation}\label{eq. requirement 2 on a and delta}
            a\geq C(n,L)\sqrt{\delta},
        \end{equation}
        and obtain ($Q_{k+1}$) by
        \begin{equation*}
            \max_{\mathcal{GC}_{2^{-k-1}}}w^{-}\leq C(n,L)a^{k}\sqrt{\delta}\leq a^{k+1}.
        \end{equation*}

        Finally, we need to choose $(M,a,\delta)$ satisfying \eqref{eq. requirement 1 on M and a} and \eqref{eq. requirement 2 on a and delta}. We can choose $M$ large and $\delta$ small, then there is room for us to choose $a$.
\end{proof}

The next lemma (also inspired by \cite{DS20}) shows that if $u$ satisfies \eqref{eq. main}, then $u$ "is more likely to" achieve its maximal value in the suspended cylinder.
\begin{lemma}\label{lem. u is small near the boundary}
    Assume that $u$ satisfies \eqref{eq. main} in $\mathcal{GC}_{2r}$. Then
    \begin{equation*}
        C^{-1}\max_{\mathcal{GC}_{r}}u\leq u(r\vec{e_{n}})\leq C\min_{\mathcal{SC}_{r,\delta}}u,
    \end{equation*}
    for some $C=C(n,L,\gamma,\lambda,\Lambda,\delta)$.
\end{lemma}
\begin{proof}
    For every $X=(x',x_{n})\in\mathcal{GC}_{\frac{3}{2}r}$, we can insert finitely many points between $X$ and $r\vec{e_{n}}$, so that adjacent points are closer to each other than to the boundary $\Gamma$. Moreover, the number of points to be inserted is
    \begin{equation*}
        N(X)=O\Big(\ln{r}-\ln{|x_{n}-g(x')|}\Big).
    \end{equation*}
    
    We apply the interior Harnack principle (Corollary~\ref{cor. interior Harnack}) between adjacent points (so Corollary~\ref{cor. interior Harnack} is applied $N(X)+1$ times), and obtain that
    \begin{equation}\label{eq. u(X) can not tend to infinity too fast}
        C^{-1}r^{-p}|x_{n}-g(x')|^{p}u(r\vec{e_{n}})\leq u(X)\leq Cr^{p}|x_{n}-g(x')|^{-p}u(r\vec{e_{n}}),\quad\mbox{for }X\in\mathcal{GC}_{\frac{3}{2}r}.
    \end{equation}
    Here, $C$ and $p$ depend on $(n,L,\gamma,\lambda,\Lambda)$. In particular,
    \begin{equation*}
        \min_{\mathcal{SC}_{r,\delta}}u\geq c(n,L,\gamma,\lambda,\Lambda,\delta)\cdot u(r\vec{e_{n}}).
    \end{equation*}
    Moreover, it follows from \eqref{eq. u(X) can not tend to infinity too fast} that for some small $\epsilon=\epsilon(n,L,\gamma,\lambda,\Lambda)$, the "pseudo $L^{\epsilon}$ norm"
    \begin{equation*}
        \|u(X)\|_{L^{\epsilon}(\mathcal{GC}_{\frac{3}{2}r})}:=\Big(\int_{\mathcal{GC}_{\frac{3}{2}r}}u(X)^{\epsilon}dX\Big)^{1/\epsilon}\leq C(n,L,\gamma,\lambda,\Lambda)r^{\frac{n}{\epsilon}}u(r\vec{e_{n}}).
    \end{equation*}
    We then apply the weak Harnack principle to the truncated function
    \begin{equation*}
        v(X)=\max\{u(X)-u(r\vec{e_{n}}),0\},
    \end{equation*}
    which satisfies
    \begin{equation*}
        -\Delta v\leq\Lambda\cdot u(r\vec{e_{n}})^{-\gamma}
    \end{equation*}
    in the distributional sense. Then
    \begin{equation*}
        \max_{\mathcal{GC}_{r}}u\leq u(r\vec{e_{n}})+Cr^{-\frac{n}{\epsilon}}\|u(X)\|_{L^{\epsilon}(\mathcal{GC}_{\frac{3}{2}r})}+Cr^{2}\Lambda\cdot u(r\vec{e_{n}})^{-\gamma}.
    \end{equation*}
    The first two terms are easily controlled by $u(r\vec{e_{n}})$. The third term can also be controlled by $u(r\vec{e_{n}})$ by the lower bound estimate Lemma~\ref{lem. non degenerate}.
\end{proof}

The final lemma is a algebraic fact:
\begin{lemma}\label{lem. algebraic fact}
    Let $\epsilon\in(0,1)$ be arbitrary, let $u,v>0$, and let $w=u-\epsilon v$. Then, there exists a constant $C(\gamma)$ independent of $(\epsilon,u,v)$, such that
    \begin{equation*}
        \epsilon v^{-\gamma}-u^{-\gamma}\leq\frac{C(\gamma)}{v^{1+\gamma}}\max\{w,0\}.
    \end{equation*}
\end{lemma}
\begin{proof}
    When $u\leq v$, then we clearly have $\epsilon v^{-\gamma}-u^{-\gamma}\leq0$. When $u\geq v$, then we consider the case $\epsilon\leq\frac{1}{2}$ and the case $\epsilon\geq\frac{1}{2}$ separately.
    
    If $\epsilon\leq\frac{1}{2}$, then $w\geq\frac{v}{2}$, while $\epsilon v^{-\gamma}-u^{-\gamma}\leq\frac{1}{2}v^{-\gamma}$, so we have $\epsilon v^{-\gamma}-u^{-\gamma}\leq\frac{w}{v^{1+\gamma}}$.
    
    If $\epsilon\geq\frac{1}{2}$, by noticing that $-u^{-\gamma}$ is convex in $u=\epsilon v+w$, we have $u^{-\gamma}\geq(\epsilon v)^{-\gamma}-\gamma(\epsilon v)^{-1-\gamma}w$ by linearization. As a result, $\epsilon v^{-\gamma}-u^{-\gamma}\leq\gamma(\epsilon v)^{-1-\gamma}w\leq\frac{\gamma\cdot2^{1+\gamma}}{v^{1+\gamma}}w$.
\end{proof}

Now we are ready to prove Theorem~\ref{thm. Harnack Comparable, general Lipschitz domain}. The strategy in proving Theorem~\ref{thm. Harnack Comparable, general Lipschitz domain} is to define
\begin{equation*}
    w=u-\epsilon v
\end{equation*}
for a small $\epsilon$. By showing $w\geq0$ in a small neighborhood of $0$, one can obtain a lower bound of $\ds\frac{u}{v}$.
\begin{proof}[Proof of Theorem~\ref{thm. Harnack Comparable, general Lipschitz domain}]
    We can easily obtain that
    \begin{equation*}
        \min_{\mathcal{GC}_{2R}}w\geq-\epsilon\|v\|_{L^{\infty}(\mathcal{GC}_{3R})}.
    \end{equation*}
    Moreover, by Lemma~\ref{lem. u is small near the boundary}, there exist some $c_{1}=c_{1}(n,L,\gamma,\lambda,\Lambda,\delta)$, such that
    \begin{equation*}
        u(X)\geq c_{1}\|u\|_{L^{\infty}(\mathcal{GC}_{3R})},\quad v(X)\geq c_{1}\|v\|_{L^{\infty}(\mathcal{GC}_{3R})},\quad\mbox{for every }X\in\mathcal{SC}_{2R}.
    \end{equation*}
    Let's then choose a small $\epsilon$ such that
    \begin{equation}\label{eq. 1.4 choice of epsilon}
        \epsilon=\frac{c_{1}\min\{\|u\|_{L^{\infty}(\mathcal{GC}_{3R})},\|v\|_{L^{\infty}(\mathcal{GC}_{3R})}\}}{(M+1)\|v\|_{L^{\infty}(\mathcal{GC}_{3R})}}\geq\frac{c_{2}R^{\frac{2}{1+\gamma}}}{(M+1)\|v\|_{L^{\infty}(\mathcal{GC}_{3R})}}
    \end{equation}
    where the last inequality follows from Lemma~\ref{lem. non degenerate}, then $\ds\min_{\mathcal{SC}_{2R}}w\geq M\epsilon\|v\|_{L^{\infty}(\mathcal{GC}_{3R})}$. Besides, by Lemma~\ref{lem. non degenerate} and Lemma~\ref{lem. algebraic fact}, we notice that
    \begin{equation*}
        \Delta w\leq\frac{C}{dist(X,\Gamma)^{2}}\max\{w,0\}.
    \end{equation*}
    Therefore, we can apply Lemma~\ref{lem. ensure positive} for every $X=(x',g(x'))$ with $|x'|\leq R$ and obtain that
    \begin{equation*}
        w(Y)\geq0\mbox{ in }\mathcal{GC}_{R}(X)\cap\{y'=x'\},
    \end{equation*}
    which implies that $u\geq\epsilon v$ in $\mathcal{GC}_{R}$. From the choice of $\epsilon$ in \eqref{eq. 1.4 choice of epsilon}, we obtain \eqref{eq. thm 1.4 (1)} and \eqref{eq. thm 1.4 (3)}. Finally, in order to obtain \eqref{eq. thm 1.4 (2)}, we use Lemma~\ref{lem. u is small near the boundary} to get
    \begin{equation*}
        C^{-1}\frac{\|u\|_{L^{\infty}(\mathcal{GC}_{3R})}}{\|v\|_{L^{\infty}(\mathcal{GC}_{3R})}}\leq\frac{u(R\vec{e_{n}})}{v(R\vec{e_{n}})}\leq C\frac{\|u\|_{L^{\infty}(\mathcal{GC}_{3R})}}{\|v\|_{L^{\infty}(\mathcal{GC}_{3R})}},
    \end{equation*}
    then \eqref{eq. thm 1.4 (2)} follows immediately from \eqref{eq. thm 1.4 (1)}.
\end{proof}

It is now the time that we state the classical boundary Harnack principle. Here, we only state the original version obtained by Kemper \cite{K72} in year 1972, despite the fact that the theory has been widely explored afterwards.
\begin{lemma}[classical boundary Harnack principle \cite{K72}]\label{lem. classical Kemper}
    Assume that $g(x'):\mathbb{R}^{n-1}\to\mathbb{R}$ is a Lipschitz function near the origin, with $g(0)=0$ and $\|g\|_{C^{0,1}(B_{1}')}\leq L$. Assume that $u,v\geq0$ are harmonic in $\mathcal{GC}_{1}$, and that $u,v$ both vanish on the curve $\{x_{n}=g(x')\}$ in the trace or limit sense. Then,
    \begin{equation*}
        C^{-1}\frac{u(\frac{1}{2}\vec{e_{n}})}{v(\frac{1}{2}\vec{e_{n}})}\leq\frac{u(x)}{v(x)}\leq C\frac{u(\frac{1}{2}\vec{e_{n}})}{v(\frac{1}{2}\vec{e_{n}})}\mbox{ in }\mathcal{GC}_{1/2}
    \end{equation*}
    for some $C=C(n,L)$. Moreover, there exists some $\epsilon=\epsilon(n,L)>0$, such that
    \begin{equation*}
        \Big\|\frac{u}{v}\Big\|_{C^{\epsilon}(\mathcal{GC}_{1/2})}\leq C\frac{u(\frac{1}{2}\vec{e_{n}})}{v(\frac{1}{2}\vec{e_{n}})}.
    \end{equation*}
\end{lemma}
The classical boundary Harnack principle will be made full use in the upcoming sections.

\section{Growth rate estimates}\label{sec. growth rate estimate}
In this section, our goal is to prove Theorem~\ref{thm. growth rate} and Theorem~\ref{thm. growth rate, (b) improved}.
\subsection{Reduction to solutions in a cone}
Let $L=[g]_{C^{0,1}}$, and let $u$ be a solution to \eqref{eq. main}. We consider the following rescaled function
\begin{equation*}
    u_{\lambda}(X)=\lambda^{-\frac{2}{1+\gamma}}u(\lambda X),\quad X\in\mathcal{GC}_{3\widetilde{R}},\quad\mbox{where }\lambda=\frac{R}{\sqrt{L^{2}+1}}\mbox{ and }\widetilde{R}=\sqrt{L^{2}+1}.
\end{equation*}
Through such a scaling, without loss of generality, we may assume that $R=\widetilde{R}=\sqrt{L^{2}+1}$ in Theorem~\ref{thm. growth rate} and Theorem~\ref{thm. growth rate, (b) improved}.

For a spherical region $\Sigma$ satisfying $\mathcal{GC}_{3\sqrt{L^{2}+1}}\subseteq\overline{Cone_{\Sigma}}$, we need to remove some unnecessary parts of $\Sigma$ to ensure that it is star-shaped, whose definition is given right below.
\begin{definition}
    Let $\Sigma\subseteq\partial B_{1}$ be an open spherical domain with a Lipschitz boundary. We then say $\Sigma$ is star-shaped, if the following holds:
    \begin{itemize}
        \item[(a)] $\vec{e_{n}}\in\Sigma$ while $-\vec{e_{n}}\notin\Sigma$;
        \item[(b)] For any point $\vec{p}\in\Sigma$ and $\vec{q}\in\partial B_{1}$, such that $\vec{q}$ lies on the shortest geodesic segment joining $\vec{p}$ and $\vec{e_{n}}$, we always have $\vec{q}\in\Sigma$.
    \end{itemize}
\end{definition}
Let's now reduce a region $\Sigma\subseteq\partial B_{1}$ to a star-shaped subset.
\begin{lemma}[reduction of a cone]\label{lem. star shape cone}
    Assume that the region $\mathcal{GC}_{3R}$ is contained in a cone $\overline{Cone_{\Sigma}}$. Then there exists a star-shaped region $\widetilde{\Sigma}\subseteq\Sigma$, such that we still have $\mathcal{GC}_{3R}\subseteq\overline{Cone_{\widetilde{\Sigma}}}$. Notice that $\widetilde{\Sigma}\subseteq\Sigma$ implies $\phi_{\widetilde{\Sigma}}\geq\phi_{\Sigma}$, so $Cone_{\widetilde{\Sigma}}$ is "less critical" than $Cone_{\Sigma}$.
\end{lemma}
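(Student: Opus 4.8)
The plan is to take $\widetilde{\Sigma}$ to be the relative interior in $\partial B_{1}$ of the closure of the set of directions occupied by $\mathcal{GC}_{3R}$. Writing $\widehat{Y}:=Y/|Y|$, set
\[
\Sigma_{0}:=\{\,\widehat{Y}\,:\,Y\in\mathcal{GC}_{3R}\setminus\{0\}\,\}\subseteq\partial B_{1},\qquad \widetilde{\Sigma}:=\mathrm{int}_{\partial B_{1}}(\overline{\Sigma_{0}}).
\]
Because $\mathcal{GC}_{3R}\subseteq\overline{Cone_{\Sigma}}=\{\,s\eta:\ s\geq0,\ \eta\in\overline{\Sigma}\,\}$, every nonzero $Y\in\mathcal{GC}_{3R}$ is a positive multiple of some $\eta\in\overline{\Sigma}$, so $\Sigma_{0}\subseteq\overline{\Sigma}$; and since an open set with Lipschitz boundary coincides with the interior of its closure, $\widetilde{\Sigma}\subseteq\mathrm{int}_{\partial B_{1}}(\overline{\Sigma})=\Sigma$. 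The set $\widetilde{\Sigma}$ is open and (being star-shaped about a point it contains, once that is shown) connected, so it remains to verify that $\widetilde{\Sigma}$ is star-shaped in the sense of the definition above and that $\mathcal{GC}_{3R}\subseteq\overline{Cone_{\widetilde{\Sigma}}}$; the closing remark $\phi_{\widetilde{\Sigma}}\geq\phi_{\Sigma}$ is just monotonicity of the first Dirichlet eigenvalue under domain inclusion.

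The heart of the matter is the claim that $\overline{\Sigma_{0}}$ is geodesically star-shaped about $\vec{e_{n}}$, i.e.\ for every $\omega\in\overline{\Sigma_{0}}$ the shortest great-circle arc from $\vec{e_{n}}$ to $\omega$ lies in $\overline{\Sigma_{0}}$. By a limiting argument (no point of $\overline{\Sigma_{0}}$ will be antipodal to $\vec{e_{n}}$, so these geodesics depend continuously on their endpoints) it suffices to treat $\omega=\widehat{Y_{0}}$ with $Y_{0}=(y',y_{n})\in\mathcal{GC}_{3R}$, so that $|y'|\leq3R$ and $0\leq y_{n}-g(y')\leq3R$. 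Fix $\rho\geq0$ and set $h(t):=t(y_{n}+\rho)-g(ty')$ for $t\in[0,1]$, which is legitimate since $t|y'|\leq3R$. Then $h$ is continuous, $h(0)=-g(0)=0$, and $h(1)=(y_{n}-g(y'))+\rho\geq0$. Let $t^{*}:=\sup\{t\in[0,1]:\ h\leq3R\text{ on }[0,t]\}$; if $t^{*}=1$ then $h(1)\in[0,3R]$, and if $t^{*}<1$ then continuity forces $h(t^{*})=3R$. In either case $P^{*}:=t^{*}(y',y_{n}+\rho)$ satisfies $|t^{*}y'|\leq3R$ and $g(t^{*}y')\leq t^{*}(y_{n}+\rho)\leq g(t^{*}y')+3R$, while $P^{*}\neq0$ (if $y'=0$ then $y_{n}\geq g(0)=0$, which forces $\rho=0$ and $Y_{0}=0$). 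Hence $P^{*}\in\mathcal{GC}_{3R}$ and $\widehat{Y_{0}+\rho\vec{e_{n}}}=\widehat{P^{*}}\in\Sigma_{0}$. Since $Y_{0}$ is not a negative multiple of $\vec{e_{n}}$ (again, $y'=0$ would force $y_{n}\geq0$), the $2$-plane sector spanned by $Y_{0}$ and $\vec{e_{n}}$ has opening angle $<\pi$, so the unit vectors $\{\widehat{Y_{0}+\rho\vec{e_{n}}}:\ \rho\in[0,\infty)\}\cup\{\vec{e_{n}}\}$ are precisely the shortest arc from $\vec{e_{n}}$ to $\widehat{Y_{0}}$, which proves the claim.

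With the claim in hand we finish as follows. First, $\mathcal{GC}_{3R}$ contains the ball $B_{\delta}(2R\vec{e_{n}})$ with $\delta=\tfrac{R}{1+L}$, so its radial projection — an open $\partial B_{1}$-neighborhood of $\vec{e_{n}}$ — lies in $\Sigma_{0}$, and hence $\vec{e_{n}}\in\widetilde{\Sigma}$. Second, every $Y=(y',y_{n})\in\mathcal{GC}_{3R}$ obeys $y_{n}\geq g(y')\geq-L|y'|$, so $\Sigma_{0}\subseteq\{\omega\in\partial B_{1}:\ \omega_{n}\geq-L|\omega'|\}$, a closed set missing a neighborhood of $-\vec{e_{n}}$; thus $-\vec{e_{n}}\notin\overline{\Sigma_{0}}\supseteq\widetilde{\Sigma}$. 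Third, by the standard fact that the interior of a set which is geodesically star-shaped about one of its interior points is again star-shaped about that point (here $\vec{e_{n}}\in\mathrm{int}(\overline{\Sigma_{0}})$ and geodesics to $\vec{e_{n}}$ are unique), $\widetilde{\Sigma}$ is star-shaped about $\vec{e_{n}}$; combined with the first two facts this is exactly the definition of star-shaped. Finally, for any $\omega\in\overline{\Sigma_{0}}$ the shortest geodesic from $\vec{e_{n}}$ to $\omega$ lies in $\overline{\Sigma_{0}}$ with relative interior in $\mathrm{int}(\overline{\Sigma_{0}})=\widetilde{\Sigma}$, so $\omega\in\overline{\widetilde{\Sigma}}$; hence $\Sigma_{0}\subseteq\overline{\widetilde{\Sigma}}$ and $\mathcal{GC}_{3R}\subseteq\overline{Cone_{\Sigma_{0}}}\subseteq\overline{Cone_{\widetilde{\Sigma}}}$.

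The only genuinely delicate step is the geodesic star-shapedness of $\overline{\Sigma_{0}}$: it is there that the epigraph structure of $\mathcal{GC}_{3R}$, the normalization $g(0)=0$, and the Lipschitz bound (through $|g(ty')|\leq Lt|y'|$ and $y_{n}\geq-L|y'|$) enter, and where the intermediate-value selection of $t^{*}$ converts ``raising the base point along $\vec{e_{n}}$'' into ``rotating the direction toward $\vec{e_{n}}$ while staying inside the coned region.'' One should also note that we do not claim $\widetilde{\Sigma}$ has Lipschitz boundary; only its star-shapedness and the finiteness of $\lambda_{\widetilde{\Sigma}}$ are used afterwards, and the Lipschitz property does hold in the cases that matter most (e.g.\ $n=2$, or $\Gamma$ of class $C^{1}$ near the origin).
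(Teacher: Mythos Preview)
Your construction is essentially the paper's: both take $\widetilde{\Sigma}$ to be the radial projection of (the interior of) $\mathcal{GC}_{3R}$ onto $\partial B_{1}$, and both verify properties (a) and (b) of the star-shapedness definition. Your geodesic-star-shapedness argument is correct but slightly more roundabout than the paper's: for $X=(x',g(x')+t)\in\mathrm{int}(\mathcal{GC}_{3R})$ the paper simply walks along the path $c(s)=(sx',g(sx')+t)$, $s\in[0,1]$, which stays at constant height $t$ above the graph and therefore remains inside $\mathrm{int}(\mathcal{GC}_{3R})$ throughout; this avoids your intermediate-value selection of $t^{*}$.

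The real gap is your last paragraph. You explicitly decline to prove that $\partial\widetilde{\Sigma}$ is Lipschitz, asserting that ``only its star-shapedness and the finiteness of $\lambda_{\widetilde{\Sigma}}$ are used afterwards.'' This is not so. In the paper's definition, star-shapedness is stated for an open spherical domain \emph{with Lipschitz boundary}, and more importantly the Lipschitz property is genuinely used in Section~4.2: the reduction to the cone $Cone_{\widetilde{\Sigma}}$ is followed by an application of the classical boundary Harnack principle for harmonic functions, which requires $\partial Cone_{\widetilde{\Sigma}}$ to be a Lipschitz graph. The paper supplies this in its step (7), arguing that for any unit $\vec{p}\in\widetilde{\Sigma}$ the Lipschitz cone $\{L|q'-p'|\le q_{n}-p_{n}\}$ (inherited from $[g]_{C^{0,1}}\le L$) is contained in $\overline{Cone_{\widetilde{\Sigma}}}$, hence $\partial Cone_{\widetilde{\Sigma}}$ is itself a Lipschitz graph. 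Your proof needs this step (or an equivalent) to be complete; the remark that Lipschitz ``does hold in the cases that matter most'' does not cover the general Lipschitz $\Gamma$ that the lemma is actually used for.
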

\begin{proof}
    We naturally define $\widetilde{\Sigma}$ as the projection of $int(\mathcal{GC}_{3R})$ to $\partial B_{1}$. Next it suffices to verify several properties of $\widetilde{\Sigma}$.
    \begin{itemize}
        \item[(1)] $\widetilde{\Sigma}$ is open in the topology of $\partial B_{1}$: Since the projection map from $\mathbb{R}^{n}\setminus\{0\}\to\partial B_{1}$ is an open map, $\widetilde{\Sigma}$ as the image of $int(\mathcal{GC}_{3R})$ must be open.
        \item[(2)] $\widetilde{\Sigma}\subseteq\Sigma$: Because $\mathcal{GC}_{3R}\subseteq\overline{Cone_{\Sigma}}$.
        \item[(3)] $\mathcal{GC}_{3R}\subseteq\overline{Cone_{\widetilde{\Sigma}}}$: By taking limit of the inclusion $int(\mathcal{GC}_{3R})\subseteq Cone_{\widetilde{\Sigma}}$.
        \item[(4)] $\vec{e_{n}}\in\widetilde{\Sigma}$: Because $\vec{e_{n}}$ is the projection of $\epsilon\vec{e_{n}}\in int(\mathcal{GC}_{3R})$.
        \item[(5)] $-\vec{e_{n}}\notin\widetilde{\Sigma}$: Since $\Gamma$ is a graph, the ray $-t\vec{e_{n}}(t>0)$ has no intersection with $Cone_{\Sigma}$ (at least locally), so $-\vec{e_{n}}\notin\Sigma\supseteq\widetilde{\Sigma}$.
        \item[(6)] Convexity along rays: Assume that $\vec{p}\in\widetilde{\Sigma}$, so that there is a point
        \begin{equation*}
            X=(x',g(x')+t)\in int(\mathcal{GC}_{3R})
        \end{equation*}
        satisfying
        \begin{equation*}
            (x',g(x')+t)\parallel\vec{p}.
        \end{equation*}
        We construct a path joining $X$ and $t\vec{e_{n}}$, which is
        \begin{equation*}
            c(s)=(s x',g(s x')+t),\quad s\in[0,1].
        \end{equation*}
        It then follows that $c(s)$ is a continuous non-zero family of vectors spanned by $\vec{p}$ and $\vec{e_{n}}$. Moreover we have $c(s)$ is not in the negative direction of $\vec{e_{n}}$. We can then apply the intermediate value theorem, and obtain that for any $\vec{q}$ lying on the geodesic segment $[\vec{e_{n}},\vec{p}]$, there exists $s\in[0,1]$ such that $c(s)$ is in the positive direction of $\vec{q}$.
        \item[(7)] Regularity of $\widetilde{\Sigma}$: Since a neighborhood of $-\vec{e_{n}}$ is not included in $\widetilde{\Sigma}$, we only need to show $Cone_{\widetilde{\Sigma}}$ is a Lipschitz graph, and it immediately implies that $\partial\widetilde{\Sigma}$ is a Lipschitz graph with respect to azimuthal angle. First, from (6) we see $\partial Cone_{\widetilde{\Sigma}}$ is a graph. Next, let $\vec{p}\in Cone_{\widetilde{\Sigma}}$ be a unit vector, meaning that
        \begin{equation*}
            \lambda\vec{p}\in int(\mathcal{GC}_{3R}),\quad\mbox{for some }\lambda>0.
        \end{equation*}
        Since $\Gamma$ is a Lipschitz graph with $[g]_{C^{0,1}}\leq L$, there exists $\epsilon>0$ such that
        \begin{equation*}
            \{\vec{q}=(q',q_{n}):L|q'-\lambda p'|\leq q_{n}-\lambda p_{n}\leq\epsilon\}\subseteq int(\mathcal{GC}_{3R})\subseteq\overline{Cone_{\widetilde{\Sigma}}}.
        \end{equation*}
        After a scaling, we have
        \begin{equation*}
            \{\vec{q}=(q',q_{n}):L|q'-p'|\leq q_{n}-\lambda p_{n}\leq\frac{\epsilon}{\lambda}\}\subseteq\overline{Cone_{\widetilde{\Sigma}}},
        \end{equation*}
        so $\partial Cone_{\widetilde{\Sigma}}$ is a Lipschitz graph.
    \end{itemize}
    Therefore, we see $\widetilde{\Sigma}$ is what we need.
\end{proof}
By replacing $\Sigma$ with $\widetilde{\Sigma}$ (still denoted as $\Sigma$), we have that $Cone_{\Sigma}$ is a Lipschitz epigraph and that $\mathcal{GC}_{3\sqrt{L^{2}+1}}\subseteq\overline{Cone_{\Sigma}}$.

Since $f(X)\in[\lambda,\Lambda]$ is H\"older continuous in $\mathcal{GC}_{3\sqrt{L^{2}+1}}$, one can extend it to $\overline{Cone_{\Sigma}}$. The extended function, still denoted as $f(X)$, is H\"older continuous in $\overline{Cone_{\Sigma}}$ and satisfies $\lambda\leq f(X)\leq\Lambda$. Given a solution $u$ satisfying \eqref{eq. main} with $R=\sqrt{L^{2}+1}$, by noticing that
\begin{equation*}
    B_{2}\cap\Big(\partial\mathcal{GC}_{3\sqrt{L^{2}+1}}\setminus\Gamma\Big)=\emptyset,
\end{equation*}
one can verify that $u\cdot\chi_{\mathcal{GC}_{3\sqrt{L^{2}+1}}}$ is continuous on $\partial B_{2}$. Therefore, it follows from Theorem~\ref{thm. well-posedness} that there exists a unique solution $v$ to the following problem:
\begin{equation}\label{eq. v general solution in a cone}
    \left\{\begin{aligned}
        &-\Delta v=f(X)v^{-\gamma}&\mbox{in }&B_{2}\cap Cone_{\Sigma}\\
        &v=0&\mbox{on }&B_{2}\cap\partial Cone_{\Sigma}\\
        &v=u\cdot\chi_{\mathcal{GC}_{3\sqrt{L^{2}+1}}}&\mbox{on }&Cone_{\Sigma}\cap\partial B_{2}
    \end{aligned}\right..
\end{equation}
By Lemma~\ref{lem. maximal principle}, we have:
\begin{equation}\label{eq. v pointwise vs u pointwise}
    v\geq u\cdot\chi_{\mathcal{GC}_{3\sqrt{L^{2}+1}}}\mbox{ in }B_{2}\cap Cone_{\Sigma}.
\end{equation}
On the other hand, write $h=\|u\|_{L^{\infty}(\mathcal{GC}_{3\sqrt{L^{2}+1}})}$, then we have that $h\geq c(n,L,\gamma)$ by Lemma~\ref{lem. non degenerate}. By noticing that $v\leq h$ on $\partial(B_{2}\cap Cone_{\Sigma})$, we have $v\leq\overline{v}$ in $B_{2}\cap Cone_{\Sigma}$ for $\overline{v}(X)$ being the solution to the following problem:
\begin{equation*}
    \left\{\begin{aligned}
        &-\Delta\overline{v}=f(X)\overline{v}^{-\gamma}&\mbox{in }&B_{2}\cap Cone_{\Sigma}\\
        &\overline{v}=h&\mbox{on }&\partial(B_{2}\cap Cone_{\Sigma})
    \end{aligned}\right..
\end{equation*}
By noticing that $\overline{v}^{-\gamma}\leq h^{-\gamma}$, we have
\begin{equation*}
    v\leq\overline{v}\leq h+Ch^{-\gamma}\leq Ch\mbox{ in }B_{2}\cap Cone_{\Sigma},
\end{equation*}
hence
\begin{equation}\label{eq. v L infty vs u L infty}
    \|v\|_{L^{\infty}(B_{2}\cap Cone_{\Sigma})}\leq C\|u\|_{L^{\infty}(\mathcal{GC}_{3\sqrt{L^{2}+1}})}.
\end{equation}

By \eqref{eq. v pointwise vs u pointwise} and \eqref{eq. v L infty vs u L infty}, in Theorem~\ref{thm. growth rate} (a)-(c) and in Theorem~\ref{thm. growth rate, (b) improved}, it suffices to establish suitable upper bound estimates for $v$.

The discussion above suggests that one can reduce the study of solutions in an arbitrary Lipschitz domain to that in Lipschitz cones. For simplicity, when proving Theorem~\ref{thm. growth rate} (a)-(c) and Theorem~\ref{thm. growth rate, (b) improved}, we assume that $Cone_{\Sigma}$ and $\mathcal{GC}_{3\sqrt{L^{2}+1}}$ are identical in $B_{2}$. Consequently, the solution $u$ to \eqref{eq. main} (which is now identical to $v$) is well-defined in $B_{2}\cap Cone_{\Sigma}$, with $\Sigma$ being a star-shaped open spherical domain with a Lipschitz boundary.

\subsection{Proof of Theorem~\ref{thm. growth rate}}
From the discussion in the previous subsection, when proving Theorem~\ref{thm. growth rate} (a)-(c), $\mathcal{GC}_{3R}$ (with $R=\sqrt{L^{2}+1}$) and $Cone_{\Sigma}$ are assumed to be identical in $B_{2}$ and that $\Sigma$ is star-shaped. We begin with a few preliminary steps and simplifications.

By Theorem~\ref{thm. Harnack Comparable, general Lipschitz domain}, we can choose a specific function $U$ defined in $B_{2}\cap Cone_{\Sigma}$. By studying its growth rate at the origin, the growth rate of general solutions follows from Theorem~\ref{thm. Harnack Comparable, general Lipschitz domain}. Our choice of $U$ is as follows:
\begin{equation*}
    \left\{\begin{aligned}
        &-\Delta U(X)=f(X)\cdot U(X)^{-\gamma}&\mbox{ in }&B_{2}\cap Cone_{\Sigma}\\
        &U(Y)=0&\mbox{ on }&\partial(B_{2}\cap Cone_{\Sigma})
    \end{aligned}\right..
\end{equation*}

By the maximal principle, we can replace $U(X)$ satisfying $-\Delta U=f(X)\cdot U^{-\gamma}$ with $U_{\lambda}$ and $U_{\Lambda}$, such that 
\begin{equation*}
    U_{\lambda}(Y)=U_{\Lambda}(Y)=0\mbox{ on }\partial(B_{2}\cap Cone_{\Sigma})
\end{equation*}
and in $B_{2}\cap Cone_{\Sigma}$ they satisfy
\begin{equation*}
    -\Delta U_{\lambda}=\lambda\cdot U_{\lambda}^{-\gamma},\quad-\Delta U_{\Lambda}=\Lambda\cdot U_{\Lambda}^{-\gamma}.
\end{equation*}
Then $U(X)$ is bounded between $U_{\lambda}$ and $U_{\Lambda}$. Therefore, it suffices to prove Theorem~\ref{thm. growth rate} when $f(X)$ is a constant (for example $f(X)\equiv1$).

To wrap up, we focus on studying the boundary growth rate for $U(X)$ satisfying
\begin{equation}\label{eq. U(X) special}
    \left\{\begin{aligned}
        &-\Delta U(X)=U(X)^{-\gamma}&\mbox{ in }&B_{1}\cap Cone_{\Sigma}\\
        &U(Y)=0&\mbox{ on }&\partial(B_{1}\cap Cone_{\Sigma})
    \end{aligned}\right..
\end{equation}
As was previously explained, Theorem~\ref{thm. growth rate} and Theorem~\ref{thm. growth rate, (b) improved} will be true for general $u(X)$'s, provided that they are true for the specific $U(X)$ chosen in \eqref{eq. U(X) special}.

We let $V: Cone_{\Sigma}\cap B_{1}\to\mathbb{R}$ be defined as
\begin{equation}\label{eq. V(X) moving rescale}
    V(X)=2^{\frac{2}{1+\gamma}}U(X/2)-U(X).
\end{equation}
We claim that $V(X)\geq0$ vanishes at $B_{1}\cap\partial Cone_{\Sigma}$ and $\Delta V(X)\geq0$.

In fact, it is obvious to see $V(X)=0$ at $B_{1}\cap\partial Cone_{\Sigma}$. Besides, as $2^{\frac{2}{1+\gamma}}U(X/2)$ is also a solution to the "SLEF" $-\Delta u=u^{-\gamma}$ with non-negative boundary value on $\partial(B_{1}\cap Cone_{\Sigma})$, we infer that
\begin{equation}\label{eq. rescale is larger}
    2^{\frac{2}{1+\gamma}}U(X/2)\geq U(X).
\end{equation}
Thus, the monotonicity of the right-hand side plus \eqref{eq. rescale is larger} will imply $\Delta V\geq0$.

Now let us prove Theorem~\ref{thm. growth rate}.
\begin{proof}[Proof of Theorem~\ref{thm. growth rate} (a)-(c)]
    Using Lemma~\ref{lem. star shape cone}, we may assume without loss of generality that $\Sigma$ is star-shaped and Lipschitz, and that $Cone_{\Sigma}$ is a Lipschitz graph. We mainly study the special example $U(X)$ defined in \eqref{eq. U(X) special}. For the solution $v$ satisfying \eqref{eq. v general solution in a cone}, its boundary estimates following from comparing $v(X)$ with $U(X)$ with the help of Theorem~\ref{thm. Harnack Comparable, general Lipschitz domain}.
    
    From the discussion above, we already have that the auxiliary function $V(x)$ constructed in \eqref{eq. V(X) moving rescale} is subharmonic, positive and vanishes at $\partial Cone_{\Sigma}$. As $\partial Cone_{\Sigma}$ is a Lipschitz graph with some uniform Lipschitz norm $L$, we apply the boundary Harnack principle for the Laplace equation (see Lemma~\ref{lem. classical Kemper} or \cite{K72}), and obtain
    \begin{equation*}
        V(X)\leq C\cdot H_{\Sigma}(X)\mbox{ in }B_{1/2}\cap Cone_{\Sigma},
    \end{equation*}
    for $H_{\Sigma}(X)$ defined in \eqref{eq. H Sigma, homogeneous harmonic in a cone}. In other words
    \begin{equation}\label{eq. w growth rate is phi}
        V(X)\leq C|X|^{\phi_{\Sigma}}\quad\mbox{in }B_{1/2}\cap Cone_{\Sigma}.
    \end{equation}
    
    We set $r_{k}=2^{-k}$ for $k\geq1$ and
    \begin{equation*}
        A_{k}=r_{k}^{-\frac{2}{1+\gamma}}\max_{Cone_{\Sigma}\cap B_{r_{k}}}U(X),\quad B_{k}=r_{k}^{-\frac{2}{1+\gamma}}\max_{Cone_{\Sigma}\cap B_{r_{k}}}V(X).
    \end{equation*}
    For each $k\geq1$ and any $X\in Cone_{\Sigma}\cap B_{r_{k}}$, we have $\frac{X}{2}\in Cone_{\Sigma}\cap B_{r_{k+1}}$. By dividing both sides of \eqref{eq. V(X) moving rescale} by $r_{k}^{\frac{2}{1+\gamma}}$, we obtain that
    \begin{equation*}
        \frac{U(X/2)}{r_{k+1}^{\frac{2}{1+\gamma}}}=\frac{2^{\frac{2}{1+\gamma}}U(X/2)}{r_{k}^{\frac{2}{1+\gamma}}}=\frac{U(X)}{r_{k}^{\frac{2}{1+\gamma}}}+\frac{V(X)}{r_{k}^{\frac{2}{1+\gamma}}}\leq A_{k}+B_{k}.
    \end{equation*}
    By the arbitrariness of $X$ and by \eqref{eq. w growth rate is phi}, we see
    \begin{equation*}
        A_{k+1}-A_{k}\leq B_{k}\leq C\cdot r_{k}^{\phi_{\Sigma}-\frac{2}{1+\gamma}}=C\cdot2^{(\frac{2}{1+\gamma}-\phi_{\Sigma})k}.
    \end{equation*}
    We then sum up the inequality above for $1\leq i\leq k-1$, and obtain
    \begin{equation*}
        A_{k}\leq A_{1}+C\sum_{i=1}^{k-1}2^{(\frac{2}{1+\gamma}-\phi_{\Sigma})i}.
    \end{equation*}

    In the sub-critical case, the common ratio
    \begin{equation*}
        q=2^{(\frac{2}{1+\gamma}-\phi_{\Sigma})}<1,
    \end{equation*}
    so $A_{k}\leq A_{1}+C(q^{1}+q^{2}+q^{3}+\cdots)\leq C$, which depends on the difference between $\phi_\Sigma$ and $\frac{2}{1+\gamma}$. Then,
    \begin{equation*}
        U(X)\leq C r_{k}^{\frac{2}{1+\gamma}}\quad\mbox{in }Cone_{\Sigma}\cap B_{r_{k}}.
    \end{equation*}

    For the solution $v$ to \eqref{eq. v general solution in a cone}, it follows from Theorem~\ref{thm. Harnack Comparable, general Lipschitz domain} that
    \begin{equation*}
        v(X)\leq C\|v\|_{L^{\infty}(Cone_{\Sigma}\cap B_{2})}\cdot U(X)\leq C\|v\|_{L^{\infty}(Cone_{\Sigma}\cap B_{2})}\cdot|X|^{\frac{2}{1+\gamma}}\mbox{ in }Cone_{\Sigma}\cap B_{1}.
    \end{equation*}
    By \eqref{eq. v pointwise vs u pointwise} and \eqref{eq. v L infty vs u L infty}, we arrive at the following boundary estimate for $u(X)$:
    \begin{equation*}
        u(X)\leq C\|u\|_{L^{\infty}(\mathcal{GC}_{3\sqrt{L^{2}+1}})}\cdot|X|^{\frac{2}{1+\gamma}}\mbox{ in }\mathcal{GC}_{\sqrt{L^{2}+1}},
    \end{equation*}
    which is the desired upper bound estimate in Theorem~\ref{thm. growth rate} (a).
    
    Similar computations can be conducted for the critical and super-critical cases, using different versions of the summation formula for geometric sequences. We omit the details. Then, we have completed the proof of Theorem~\ref{thm. growth rate} (a)-(c).
\end{proof}

Similar to the proof of Theorem~\ref{thm. growth rate} (a)-(c), when proving parts (d) and (e), the domain inclusion $Cone_{\Sigma}\subseteq\mathcal{GC}_{3R}$ allows us to assume without loss of generality that $Cone_{\Sigma}$ and $\mathcal{GC}_{3R}$ are identical in $B_{2}$.
\begin{proof}[Proof of Theorem~\ref{thm. growth rate} (d)(e)] We first prove the optimality part (d).
\begin{itemize}
    \item In the sub-critical case, the optimality simply follows from Lemma~\ref{lem. non degenerate}.
    \item In the super-critical case, let $v$ be the harmonic replacement of $u$ in $B_{1}\cap Cone_{\Sigma}$, then
    \begin{equation*}
        \mbox{as }-\Delta v=0\leq v^{-\gamma},\quad\mbox{we have }u\geq v.
    \end{equation*}
    Again we apply the boundary Harnack principle (Lemma~\ref{lem. classical Kemper} or \cite{K72}) for the Laplace equation and obtain that
    \begin{equation*}
        v(X)\geq C^{-1}\cdot H_{\Sigma}(X)\mbox{ in }B_{1/2}\cap Cone_{\Sigma}.
    \end{equation*}
    In particular on the ray $t\vec{e_{n}}$, we have
    \begin{equation*}
        u(t\vec{e_{n}})\geq v(t\vec{e_{n}})\geq C^{-1}\cdot H_{\Sigma}(t\vec{e_{n}})\geq C^{-1}|t|^{\phi_{\Sigma}}.
    \end{equation*}
\end{itemize}
Hence, we have completed the proof of part (d).

Now, we give the proof of part (e). We see that when $K>0$ is sufficiently small,
\begin{equation*}
    \underline{U}(X)=K(\ln{\frac{1}{|X|}})^{\phi_{\Sigma}/2}H_{\Sigma}(X)
\end{equation*}
is a lower solution of the "SLEF" supported in $Cone_{\Sigma}\cap B_{1/e}$. In fact, we need to recall the fact that $H_{\Sigma}$ is harmonic, homogeneous with degree $\phi_{\Sigma}$, and that
\begin{equation*}
    H_{\Sigma}(X)\leq|X|^{\phi_{\Sigma}}\quad(\mbox{since we assume}\max_{B_{1}\cap Cone_{\Sigma}}H_{\Sigma}(X)=1).
\end{equation*}
Besides, as $\phi_{\Sigma}=\frac{2}{1+\gamma}$ with $\gamma>0$, we infer that $\phi_{\Sigma}<2$. Then for any $X\in Cone_{\Sigma}\cap B_{1/e}$,
\begin{align*}
    -\Delta\underline{U}(X)=&-K\cdot H_{\Sigma}(X)\Delta(\ln{\frac{1}{|X|}})^{\phi_{\Sigma}/2}-2K\frac{\partial(\ln{\frac{1}{|X|}})^{\phi_{\Sigma}/2}}{\partial r}\cdot\frac{\partial H_{\Sigma}(X)}{\partial r}\\
    =&-K\cdot H_{\Sigma}(X)\Big\{\frac{\phi_{\Sigma}}{2|X|^{2}}(\ln{\frac{1}{|X|}})^{\frac{\phi_{\Sigma}}{2}-1}+\frac{\phi_{\Sigma}}{2|X|^{2}}(\frac{\phi_{\Sigma}}{2}-1)(\ln{\frac{1}{|X|}})^{\frac{\phi_{\Sigma}}{2}-2}\\
    &-\frac{(n-1)\phi_{\Sigma}}{2|X|^{2}}(\ln{\frac{1}{|X|}})^{\frac{\phi_{\Sigma}}{2}-1}\Big\}+2K\cdot\frac{\phi_{\Sigma}}{2|X|}(\ln{\frac{1}{|X|}})^{\frac{\phi_{\Sigma}}{2}-1}\cdot\frac{\phi_{\Sigma}}{|X|}H_{\Sigma}(X)\\
    =&\frac{K\phi_{\Sigma}\cdot H_{\Sigma}(X)}{2|X|^{2}}(\ln{\frac{1}{|X|}})^{\frac{\phi_{\Sigma}}{2}-1}\Big\{(n-2+2\phi_{\Sigma})+(1-\frac{\phi_{\Sigma}}{2})(\ln{\frac{1}{|X|}})^{-1}\Big\}\\
    \leq&\frac{1}{2}K\phi_{\Sigma}(n-1+\frac{3}{2}\phi_{\Sigma})|X|^{\phi_{\Sigma}-2}(\ln{\frac{1}{|X|}})^{\frac{\phi_{\Sigma}}{2}-1}.
\end{align*}
On the other hand, as $\gamma=\frac{2}{\phi_{\Sigma}}-1>0$ in the critical case, we have
\begin{equation*}
    \underline{U}(X)^{-\gamma}=K^{-\gamma}H_{\Sigma}(X)^{1-\frac{2}{\phi_{\Sigma}}}(\ln{\frac{1}{|X|}})^{\frac{\phi_{\Sigma}}{2}-1}\geq K^{-\gamma}|X|^{\phi_{\Sigma}-2}(\ln{\frac{1}{|X|}})^{\frac{\phi_{\Sigma}}{2}-1}.
\end{equation*}
Therefore, as long as we choose a small $K\ll1$, we have
\begin{equation*}
    -\Delta\underline{U}(X)\ll\underline{U}(X)^{-\gamma}\quad\mbox{in }Cone_{\Sigma}\cap B_{1/e}.
\end{equation*}
It then provides a lower bound of a solution $U(X)$ defined in $Cone_{\Sigma}\cap B_{1/e}$ having the same boundary data. Then by Theorem~\ref{thm. Harnack Comparable, general Lipschitz domain}, the lower bound in part (e) holds for all other solutions as well.\end{proof}

We end this subsection by giving a proof of Corollary~\ref{cor. of thm 1.2}.
\begin{proof}[Proof of Corollary~\ref{cor. of thm 1.2}]
    From Theorem~\ref{thm. growth rate} (a)-(c), we obtain a pointwise $C^{\mu}$ continuity of the solution $u$ near the boundary for some sufficiently small $\mu$ (depending on $[\partial\Omega]_{C^{0,1}}$ and $\gamma$). Besides, from Theorem~\ref{thm. growth rate} (d)(e), we can control the growth rate of $f(X)u^{-\gamma}$ from above near the boundary. Then, for each point $X\in\Omega$ whose distance to $\partial\Omega$ equals $r$, we apply the interior Schauder estimate in $B_{r/2}(X)$, and obtain that
    \begin{equation}\label{eq. interior C mu estimate}
        \|u\|_{C^{\mu}(B_{r/4}(X))}\leq C,
    \end{equation}
    where $\mu$ and $C$ depend only on $\Omega$ and $\gamma$. Then, a covering argument yields that $u\in C^{\mu}(\overline{\Omega})$.
    
    We now prove part (a), where $\gamma>1$. Observe that as $\frac{2}{1+\gamma}<1$, there exists $\delta=\delta(\gamma)>0$ such that the cone
    \begin{equation*}
        Cone_{\Sigma}:=\{x_{n}\geq-\delta|x'|\}
    \end{equation*}
    satisfies $\phi_{\Sigma}>\frac{2}{1+\gamma}$. Since $\Gamma$ is a $C^{1}$ or a convex boundary, after a suitable rotation, we can find some $r>0$ such that
    \begin{equation*}
        (\Omega-Y)\cap B_{r}\subseteq Cone_{\Sigma},\quad\mbox{for all }Y\in B_{r}\cap\partial\Omega.
    \end{equation*}
    Here, $(\Omega-Y)$ refers to the translation of $\Omega$ by the vector $-Y$. Applying Theorem~\ref{thm. growth rate} (a) gives that
    \begin{equation*}
        u(Y+t\vec{e_{n}})\leq C t^{\frac{2}{1+\gamma}}
    \end{equation*}
    for sufficiently small $t$, while Theorem~\ref{thm. growth rate} (d) yields that
    \begin{equation*}
        u(Y+t\vec{e_{n}})\geq c t^{\frac{2}{1+\gamma}}.
    \end{equation*}
    Consequently, \eqref{eq. interior C mu estimate} holds with $\mu=\frac{2}{1+\gamma}$, which completes the proof of Corollary~\ref{cor. of thm 1.2} (a).
    
    Part (b1) and (b2) follow analogously. For (b1), we choose a suitable inner cone
    \begin{equation*}
        Cone_{\Sigma}:=\{x_{n}\geq\delta|x'|\}\mbox{ for }\delta>0.
    \end{equation*}
    For (b2), we let the inner cone be $Cone_{\Sigma}=\mathbb{R}^{n}_{+}$. In both cases, we have $\phi_{\Sigma}<\frac{2}{1+\gamma}$ and
    \begin{equation*}
        Cone_{\Sigma}\cap B_{r}\subseteq(\Omega-Y),\quad\mbox{for all }Y\in B_{r}\cap\partial\Omega.
    \end{equation*}
    Using Theorem~\ref{thm. growth rate} (d), we obtain the estimates \eqref{eq. estimate b1 in cor 1.1} and \eqref{eq. estimate b2 in cor 1.1}. Here, the exponent $\alpha$ in \eqref{eq. estimate b1 in cor 1.1} can be made arbitrarily close to $1$ as $\delta\to0$.
\end{proof}
\subsection{Proof of Theorem~\ref{thm. growth rate, (b) improved}}
Now we improve the estimate in Theorem~\ref{thm. growth rate} part (b), given the existence of a continuous solution to \eqref{eq. growth rate special condition in critical case}. One more time as a reminder, when proving Theorem~\ref{thm. growth rate, (b) improved}, we assume that $\mathcal{GC}_{3R}$ (with $R=\sqrt{L^{2}+1}$) and $Cone_{\Sigma}$ are identical in $B_{2}$ and $\Sigma$ is star-shaped.

For the sake of convenience, we assume $f(X)\equiv1$ just like in the previous subsection. Besides, we let $\Sigma$ be fixed, and without causing much confusion, we simply write
\begin{equation*}
    \phi=\phi_{\Sigma}=\frac{2}{1+\gamma}.
\end{equation*}

We denote $r_{k}=2^{-k}$ for $k\geq1$. Let $U(X)$ be a solution to $-\Delta u=u^{-\gamma}$ defined in $Cone_{\Sigma}\cap B_{1}$, such that for a sufficiently large $T$ (to be specified in Lemma~\ref{lem. Claim of Ak iteration} below),
\begin{equation*}
    U(\frac{1}{2}\vec{e_{n}})\geq2^{-\phi}+T\cdot H_{\Sigma}(\frac{1}{2}\vec{e_{n}}).
\end{equation*}
In fact, for every $T$, such a special solution $U(X)$ exists. To see this, we just require $U(X)$ to solve the problem:
\begin{equation}\label{eq. special U(X) in Thm 1.3}
    \left\{\begin{aligned}
        &-\Delta U(X)=U(X)^{-\gamma}&\mbox{ in }&Cone_{\Sigma}\cap B_{1}\\
        &U(Y)=(T+\frac{2^{-\phi}}{H_{\Sigma}(\frac{1}{2}\vec{e_{n}})})\cdot H_{\Sigma}(Y)&\mbox{ on }&\partial(Cone_{\Sigma}\cap B_{1})
    \end{aligned}\right..
\end{equation}
Since $U(X)$ is super-harmonic, we then have
\begin{equation*}
    U(\frac{1}{2}\vec{e_{n}})\geq(T+\frac{2^{-\phi}}{H_{\Sigma}(\frac{1}{2}\vec{e_{n}})})H_{\Sigma}(\frac{1}{2}\vec{e_{n}})=2^{-\phi}+T\cdot H_{\Sigma}(\frac{1}{2}\vec{e_{n}}).
\end{equation*}
We then define a sequence with $A_{1}\geq T$ as follows:
\begin{equation}\label{eq. Ak in Thm 1.3}
    A_{k}=\max_{2^{-k-1}\leq|X|\leq2^{-k}}\frac{U(X)-2^{-k\phi}k^{-\gamma\phi/2}}{T\cdot H_{\Sigma}(X)}.
\end{equation}
The following claim is the key to prove Theorem~\ref{thm. growth rate, (b) improved}.
\begin{lemma}\label{lem. Claim of Ak iteration}
    Let $T$ be sufficiently large. Construct the special solution $U(X)$ as in \eqref{eq. special U(X) in Thm 1.3}, and define quantities $A_{k}$ as in \eqref{eq. Ak in Thm 1.3}. For every $k\geq1$, if $A_{k}\geq(k+1)^{\phi/2}$, then
    \begin{equation*}
        A_{k+1}\leq A_{k}+A_{k}^{-\gamma}+k^{\frac{-\gamma}{1+\gamma}}.
    \end{equation*}
    Otherwise if $A_{k}\leq(k+1)^{\phi/2}$, then
    \begin{equation*}
        A_{k+1}\leq(k+1)^{\phi/2}+2.
    \end{equation*}
\end{lemma}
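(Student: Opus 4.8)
\emph{Overall strategy.} I would prove both bounds by a barrier/comparison argument on the dyadic balls $B_{\rho_k}$, $\rho_k:=2^{-k-1}$, pushing the estimate at scale $2^{-k}$ forward to scale $2^{-k-1}$ by the maximum principle (Lemma~\ref{lem. maximal principle}). Write $\ell_k:=2^{-k\phi}k^{-\gamma\phi/2}$ for the ``floor'' subtracted in the definition of $A_k$, and note the two identities $\gamma\phi/2=\gamma/(1+\gamma)$ and $2-\gamma\phi=\phi$, which get used repeatedly. From the definition of $A_k$ one has, on the annulus $2^{-k-1}\le|X|\le 2^{-k}$, the pointwise inequality $U(X)\le \ell_k+TA_kH_\Sigma(X)$ (trivial on $\partial Cone_\Sigma$, where $U=0<\ell_k$); in particular it holds on $\partial B_{\rho_k}$.

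\emph{The barrier.} Let $\widetilde w_{\rho_k}(X):=\rho_k^{\phi}\,w(X/\rho_k)$ be the rescaling of the solution $w$ to \eqref{eq. growth rate special condition in critical case}; using $2-\gamma\phi=\phi$ one checks that $-\Delta\widetilde w_{\rho_k}=H_\Sigma^{-\gamma}$ in $Cone_\Sigma\cap B_{\rho_k}$, that $\widetilde w_{\rho_k}\ge 0$, and that $\widetilde w_{\rho_k}=0$ on $\partial(Cone_\Sigma\cap B_{\rho_k})$, hence on $\partial B_{\rho_k}$. Set
\[
\psi_k(X):=\ell_k+TA_k\,H_\Sigma(X)+(TA_k)^{-\gamma}\,\widetilde w_{\rho_k}(X).
\]
Then $\psi_k\ge 0=U$ on $\partial Cone_\Sigma$, $\psi_k=\ell_k+TA_kH_\Sigma\ge U$ on $\partial B_{\rho_k}$, and $-\Delta\psi_k=(TA_k)^{-\gamma}H_\Sigma^{-\gamma}\ge\psi_k^{-\gamma}$: on $\{TA_kH_\Sigma\ge\ell_k\}$ use $\psi_k\ge TA_kH_\Sigma$, while on $\{TA_kH_\Sigma<\ell_k\}$ use $\psi_k\ge\ell_k$ together with the very inequality defining that set, so $\psi_k^{-\gamma}\le(TA_kH_\Sigma)^{-\gamma}$ in either region. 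Lemma~\ref{lem. maximal principle} on $Cone_\Sigma\cap B_{\rho_k}$ then gives $U\le\psi_k$ there, in particular on the annulus $2^{-k-2}\le|X|\le 2^{-k-1}$ that defines $A_{k+1}$.

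\emph{Reading off $A_{k+1}$.} On that annulus $U\le\psi_k$ gives
\[
\frac{U(X)-\ell_{k+1}}{TH_\Sigma(X)}\ \le\ A_k\ +\ \frac{\ell_k-\ell_{k+1}}{TH_\Sigma(X)}\ +\ \frac{(TA_k)^{-\gamma}}{T}\cdot\frac{\widetilde w_{\rho_k}(X)}{H_\Sigma(X)}.
\]
The third term equals $\tfrac{(TA_k)^{-\gamma}}{T}\cdot\tfrac{w(Y)}{H_\Sigma(Y)}$ for $Y=X/\rho_k$ in the fixed region $\overline{Cone_\Sigma}\cap(B_1\setminus B_{1/2})$; since $w$ is a nonnegative Green potential of $H_\Sigma^{-\gamma}$ vanishing on the Lipschitz boundary, a boundary-Harnack / potential estimate gives $w\le CH_\Sigma$ there, so this term is $\le C(TA_k)^{-\gamma}/T\le A_k^{-\gamma}$ once $T$ is large. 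For the middle term, one uses that $U$ is continuous up to $\partial Cone_\Sigma$ with boundary value $0$, so $U<\ell_{k+1}$ near $\partial Cone_\Sigma$ and the left-hand side is negative there; hence the maximum defining $A_{k+1}$ is attained on $\{U\ge\ell_{k+1}\}$. On this set the sharp boundary growth $U(X)\lesssim \|U\|_{L^\infty(\text{scale }2^{-k})}(\operatorname{dist}(X,\partial Cone_\Sigma)/2^{-k})^{\phi}$, with exponent exactly $\phi=2/(1+\gamma)$, forces $\operatorname{dist}(X,\partial Cone_\Sigma)$, and hence $H_\Sigma(X)$, to be bounded below by a controlled multiple of $2^{-k}$ times a negative power of $k$; combining this with $\ell_k-\ell_{k+1}=2^{-(k+1)\phi}\big(2^{\phi}k^{-\gamma\phi/2}-(k+1)^{-\gamma\phi/2}\big)\lesssim 2^{-k\phi}k^{-\gamma\phi/2}$ yields the middle term $\le k^{-\gamma\phi/2}=k^{-\gamma/(1+\gamma)}$, which gives the first alternative $A_{k+1}\le A_k+A_k^{-\gamma}+k^{-\gamma/(1+\gamma)}$. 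If instead $A_k\le(k+1)^{\phi/2}$, then $TA_kH_\Sigma$ contributes at most $(k+1)^{\phi/2}$ to the ratio, and — $A_k$ being bounded below by a fixed positive constant via Lemma~\ref{lem. non degenerate} — the middle and third terms are each $\le 1$ for $T$ large, giving $A_{k+1}\le(k+1)^{\phi/2}+2$.

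\emph{Main obstacle.} The barrier construction and the three-term split are routine; the genuinely hard step is the near-$\partial Cone_\Sigma$ bookkeeping — proving that, on the set carrying the maximum that defines $A_{k+1}$, the floor-renormalization term is no larger than $k^{-\gamma/(1+\gamma)}$ (and the constant $2$ in the other alternative). This requires tracking the scales in the sharp boundary growth of $U$ with exponent $\phi=2/(1+\gamma)$ against the precise $k$-power built into $\ell_k$, and is exactly why $\ell_k=2^{-k\phi}k^{-\gamma\phi/2}$ is the correct quantity to subtract; I would spend most of the effort making that quantitative estimate rigorous.
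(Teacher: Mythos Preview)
Your barrier $\psi_k=\ell_k+TA_kH_\Sigma+(TA_k)^{-\gamma}\widetilde w_{\rho_k}$ and the three--term split are the right shape, but the handling of the two error terms has genuine gaps that the paper's argument avoids by a different construction.

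\textbf{The floor term.} The constant $\ell_k$ in your barrier does not vanish on $\partial Cone_\Sigma$, so after dividing by $TH_\Sigma$ the term $(\ell_k-\ell_{k+1})/(TH_\Sigma)$ is unbounded near the cone boundary. Your remedy --- locate the maximizing point on $\{U\ge\ell_{k+1}\}$ and bound $H_\Sigma$ from below there via ``sharp boundary growth of $U$ with exponent $\phi$'' --- is where the argument breaks. Near a point of $\partial Cone_\Sigma$ with $|X|\sim 2^{-k}$ the decay exponent of $U$ in the distance to $\partial Cone_\Sigma$ is governed by the \emph{local} geometry of $\partial\Sigma$ (which is only Lipschitz), not by the global $\phi_\Sigma=2/(1+\gamma)$; there is no available estimate giving $U\lesssim (\mathrm{dist}/2^{-k})^{\phi}$ at that scale, and trying to extract one from the iteration itself is circular. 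The paper sidesteps this completely: it rescales to $B_1$ and compares $U_k$ with an SLEF solution $V_k$ whose boundary datum on $\partial Cone_\Sigma$ is $k^{-\gamma\phi/2}\max\{10|Y|-9,0\}$, hence \emph{zero} on $\partial Cone_\Sigma\cap B_{9/10}$. The harmonic part of the upper barrier then splits as $A_kTH_\Sigma+k^{-\gamma\phi/2}W_1$, with $W_1$ a fixed harmonic function vanishing on $\partial Cone_\Sigma\cap B_{9/10}$; the classical boundary Harnack gives $W_1\le C\,H_\Sigma$ on $B_{1/2}\setminus B_{1/4}$, and the whole ``floor'' contribution is absorbed into $TH_\Sigma$ once $T$ is large. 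No lower bound on $H_\Sigma$ at the maximizing point is ever needed.

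\textbf{The $w$--term.} You assert $w\le CH_\Sigma$ on $B_1\setminus B_{1/2}$ by ``boundary Harnack / potential estimate'', but $w$ is not harmonic: it satisfies $-\Delta w=H_\Sigma^{-\gamma}$, whose right--hand side is singular at $\partial Cone_\Sigma$, so the usual boundary Harnack with bounded right--hand side does not apply, and continuity of $w$ up to the boundary does not give $w/H_\Sigma$ bounded. The paper does not claim this; it uses only that $W_2(=w)$ is bounded and writes $T^{-\gamma}W_2\le TH_\Sigma+2^{-\phi}$, absorbing the bounded part into an \emph{additive} constant which, after rescaling, becomes exactly the $2^{-(k+1)\phi}(k+1)^{-\gamma\phi/2}$ subtracted in $A_{k+1}$ (this is why the hypothesis $A_k\ge(k+1)^{\phi/2}$ enters, via $A_k^{-\gamma}\le(k+1)^{-\gamma\phi/2}$).

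In short: your decomposition is natural, but both error terms rely on boundary comparisons that are not available; the paper's key device --- the cut--off boundary datum on $\partial Cone_\Sigma$ producing a \emph{harmonic} $W_1$ comparable to $H_\Sigma$, plus an additive rather than multiplicative control of $W_2$ --- is precisely what makes the iteration close.
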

\begin{proof}
    For every $k\geq1$, we let
    \begin{equation*}
        U_{k}(X)=2^{k\phi}U(2^{-k}X)\quad\mbox{defined in }Cone_{\Sigma}\cap B_{1}
    \end{equation*}
    to be an invariant scaling of $U(X)$. Assume that $A_{k}\geq(k+1)^{\phi/2}$, and let $V_{k}(X)$ be the solution to
    \begin{equation*}
        \left\{\begin{aligned}
            &-\Delta V_{k}=V_{k}^{-\gamma}&\mbox{ in }&Cone_{\Sigma}\cap B_{1}\\
            &V_{k}(Y)=k^{-\gamma\phi/2}+A_{k}T\cdot H_{\Sigma}(Y)&\mbox{ on }&Cone_{\Sigma}\cap\partial B_{1}\\
            &V_{k}(Y)=k^{-\gamma\phi/2}\cdot\max\{10|Y|-9,0\}&\mbox{ on }&B_{1}\cap\partial Cone_{\Sigma}
        \end{aligned}\right..
    \end{equation*}
    We then have
    \begin{equation*}
        V_{k}(X)\geq U_{k}(X)\mbox{ in }Cone_{\Sigma}\cap B_{1}.
    \end{equation*}
    Notice that $A_{k}T\cdot H_{\Sigma}(X)$ is harmonic and less than $V_{k}(X)$ at $\partial(Cone_{\Sigma}\cap B_{1})$, we have
    \begin{equation*}
        A_{k}T\cdot H_{\Sigma}(X)\leq V_{k}(X)
    \end{equation*}
    as well in the interior. Therefore, we can bound $V_{k}$ from above by $\overline{V_{k}}(X)$ satisfying
    \begin{equation*}
        \left\{\begin{aligned}
            &-\Delta\overline{V_{k}}=(A_{k}T\cdot H_{\Sigma}(X))^{-\gamma}&\mbox{ in }&Cone_{\Sigma}\cap B_{1}\\
            &\overline{V_{k}}(Y)=k^{-\gamma\phi/2}+A_{k}T\cdot H_{\Sigma}(Y)&\mbox{ on }&Cone_{\Sigma}\cap\partial B_{1}\\
            &\overline{V_{k}}(Y)=k^{-\gamma\phi/2}\cdot\max\{10|Y|-9,0\}&\mbox{ on }&B_{1}\cap\partial Cone_{\Sigma}
        \end{aligned}\right..
    \end{equation*}
    Besides, let $\underline{V_{k}}$ be the harmonic replacement of $V_{k}$, then
    \begin{equation*}
        \underline{V_{k}}(X)=A_{k}T\cdot H_{\Sigma}(X)+k^{-\gamma\phi/2}W_{1}(X),
    \end{equation*}
    for $W_{1}(X)$ being a fixed harmonic function defined in $Cone_{\Sigma}\cap B_{1}$ satisfying
    \begin{equation*}
        \left\{\begin{aligned}
            &-\Delta W_{1}(X)=0&\mbox{ in }&Cone_{\Sigma}\cap B_{1}\\
            &W_{1}(Y)=\max\{10|Y|-9,0\}&\mbox{ on }&B_{1}\cap\partial Cone_{\Sigma}
        \end{aligned}\right..
    \end{equation*}It then follows that
    \begin{equation*}
        -\Delta(\overline{V_{k}}-\underline{V_{k}})=A_{k}^{-\gamma}T^{-\gamma}H_{\Sigma}(X)^{-\gamma},\quad\mbox{and }\overline{V_{k}}-\underline{V_{k}}=0\mbox{ at }\partial(Cone_{\Sigma}\cap B_{1}).
    \end{equation*}
    
    As we assume the existence of $-\Delta^{-1}(H_{\Sigma}^{-\gamma})$, we let $W_{2}(X)$ be the solution to
    \begin{equation*}
        \left\{\begin{aligned}
            &-\Delta W_{2}(X)=H_{\Sigma}(X)^{-\gamma}&\mbox{ in }&Cone_{\Sigma}\cap B_{1}\\
            &W_{2}=0&\mbox{ on }&\partial(Cone_{\Sigma}\cap B_{1})
        \end{aligned}\right..
    \end{equation*}
    We then use the upper bound $\overline{V_{k}}(X)$ and have
    \begin{equation*}
        U_{k}(X)\leq A_{k}T\cdot H_{\Sigma}(X)+k^{-\gamma\phi/2}W_{1}(X)+A_{k}^{-\gamma}T^{-\gamma}W_{2}(X).
    \end{equation*}
    If $T$ is sufficiently large, we can guarantee that for $X\in Cone_{\Sigma}\cap(B_{1/2}\setminus B_{1/4})$,
    \begin{equation*}
        W_{1}(X)\leq T\cdot H_{\Sigma}(X),\quad T^{-\gamma}W_{2}(X)\leq T\cdot H_{\Sigma}(X)+2^{-\phi}.
    \end{equation*}
    It then follows that on $Cone_{\Sigma}\cap(B_{1/2}\setminus B_{1/4})$,
    \begin{equation*}
        U_{k}(X)\leq(A_{k}+A_{k}^{-\gamma}+k^{-\gamma\phi/2})T\cdot H_{\Sigma}(X)+2^{-\phi}A_{k}^{-\gamma}.
    \end{equation*}
    Notice that
    \begin{equation*}
        2^{-\phi}A_{k}^{-\gamma}\leq2^{-\phi}(k+1)^{-\gamma\phi/2},
    \end{equation*}
    so we rescale back to $B_{2^{-k}}$ and conclude that
    \begin{equation*}
        U(X)\leq(A_{k}+A_{k}^{-\gamma}+k^{-\gamma\phi/2})T\cdot H_{\Sigma}(X)+2^{-(k+1)\phi}(k+1)^{-\gamma\phi/2},
    \end{equation*}
    which means $A_{k+1}\leq A_{k}+A_{k}^{-\gamma}+k^{-\gamma\phi/2}$.

    The second case where $A_{k}\leq(k+1)^{\phi/2}$ can be proven similarly by always replacing $A_{k}$ with $(k+1)^{\phi/2}$.
\end{proof}

We are now in a position to prove Theorem~\ref{thm. growth rate, (b) improved}.
\begin{proof}[Proof of Theorem~\ref{thm. growth rate, (b) improved}]
Let $C$ be sufficiently large, and suppose that $A_{m}\leq C m^{\phi/2}=C m^{\frac{1}{1+\gamma}}$ for some $m\geq1$, then by Lemma~\ref{lem. Claim of Ak iteration} we have (also recall that $\frac{\phi}{2}-1=\frac{-\gamma}{1+\gamma}$):
\begin{equation*}
    A_{m+1}\leq\left\{\begin{aligned}
        &C m^{\phi/2}+(m+1)^{\frac{-\gamma}{1+\gamma}}+m^{\frac{-\gamma}{1+\gamma}}\leq C(m+1)^{\phi/2},&\mbox{if }&A_{m}\geq(m+1)^{\phi/2}\\
        &(m+1)^{\phi/2}+2\leq C(m+1)^{\phi/2},&\mbox{if }&A_{m}\leq(m+1)^{\phi/2}
    \end{aligned}\right..
\end{equation*}
The inductive argument shows that $A_{k}\leq C k^{\phi/2}$ for all $k\geq1$. Therefore,
\begin{equation*}
    U(X)=O\Big(|X|^{\phi}(\ln{\frac{1}{|X|}})^{\phi/2}\Big).
\end{equation*}
We then use Theorem~\ref{thm. Harnack Comparable, general Lipschitz domain} to obtain the same estimate (with a different factor in front) for general $u(X)$'s.
\end{proof}

\section{Continuity of the ratio \texorpdfstring{$\ds\frac{u}{v}$}{Lg}}\label{sec. continuity of the ratio u/v}
In this section, our goal is to show Theorem~\ref{thm. limit is 1} and Theorem~\ref{thm. limit is continuous when obtuse}.

\subsection{Proof of Theorem~\ref{thm. limit is 1}}
The key to proof Theorem~\ref{thm. limit is 1} is to somehow iterate the proof of Theorem~\ref{thm. Harnack Comparable, general Lipschitz domain}. We denote $r_{k}=3^{1-k}R$, and define a sequence $w_{k}$ with
\begin{equation*}
    w_{k}=u-(1-\sigma_{k})v,
\end{equation*}
where $\sigma_{k}\to0$ is to be given later. If we can show that $w_{k}\geq0$ in $\mathcal{GC}_{r_{k}}$, then $\frac{u}{v}-1\geq-\sigma_{k}$. We will always set $\sigma_{0}=Q$, then by the assumptions in Theorem~\ref{thm. limit is 1}, $w_{0}\geq0$ in $\mathcal{GC}_{r_{0}}=\mathcal{GC}_{3R}$.

In fact, we will show that $\sigma_{k}$ decays like a geometric sequence in the sub-critical case, while like a negative power sequence in the critical case.

On the other hand, one can not guarantee that $\sigma\to0$ in the super-critical case. This serves as a partial explanation of Theorem~\ref{thm. ratio not continuous}.

The following lemma is essential in the proof. It describes how a solution to the "negative-eigenvalue equation" bends down in the interior.
\begin{lemma}[an ABP-type estimate]\label{lem. negative eigenvalue equation}
    Assume that $0\leq u\leq v$ in $B_{1}$ and
    \begin{equation*}
        \Delta v\leq0,\quad\Delta u\geq t u
    \end{equation*}
    for some $t\in(0,1/2)$. Then there is $c=c(n)$ such that in $B_{1/2}$ we have
    \begin{equation*}
        u\leq(1-ct)v.
    \end{equation*}
\end{lemma}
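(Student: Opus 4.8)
\textbf{Proof plan for Lemma~\ref{lem. negative eigenvalue equation}.}

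The plan is to compare $u$ against a supersolution of the plain Laplace equation whose ``shape'' is dictated by the first eigenfunction of the ball, capturing the fact that the zeroth-order term $-tu$ forces $u$ to dip strictly below the superharmonic majorant $v$ in the interior. Concretely, first I would fix the first Dirichlet eigenfunction $\psi$ of $-\Delta$ on $B_{1}$, normalized so that $0\le\psi\le 1$ and $\psi\equiv 1$ on, say, $\partial B_{1/2}$ is \emph{not} possible, so instead normalize $\max_{B_1}\psi=1$ and record that $\psi\ge c_0=c_0(n)>0$ on $B_{1/2}$, with $-\Delta\psi=\lambda_1\psi$ and $\lambda_1=\lambda_1(n)$. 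The candidate comparison function is
\begin{equation}
    \Phi := v - \eta\,t\,\psi
\end{equation}
for a small dimensional constant $\eta=\eta(n)$ to be chosen. I would check that $\Phi$ is a supersolution of the inequality satisfied by $u$: since $-\Delta v\ge 0$ and on $B_1$ we have $-\Delta(\eta t\psi)=\eta t\lambda_1\psi$, it follows that
\begin{equation}
    -\Delta\Phi \;\ge\; -\eta t\lambda_1\psi \;\ge\; -\eta t\lambda_1 \;\ge\; -t\Phi
\end{equation}
provided $\eta\lambda_1\le 1$ and $\Phi\ge 0$ — here one uses $\psi\le 1$ and $0\le v$, and the bound $\Phi\ge v-\eta t\ge -\eta t\ge -1/2$ if $\eta\le1$ and $t<1/2$, so $-t\Phi\le t/2$, which is dominated once $\eta\lambda_1\le 1$; a little care with signs is needed but the inequality $-\Delta\Phi\ge -t\Phi$ holds on $B_1$ with $\eta=\eta(n)$ small.

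Next I would run the maximum principle for the operator $-\Delta + t$ on the difference $u-\Phi$. On $\partial B_1$, $\psi=0$ so $\Phi=v\ge u$, giving $u-\Phi\le 0$ on $\partial B_1$. If $u-\Phi$ attained a positive interior maximum at $X_0$, then $-\Delta(u-\Phi)(X_0)\le 0$, while $-\Delta u\le -tu$ and $-\Delta\Phi\ge -t\Phi$ give $-\Delta(u-\Phi)(X_0)\ge -t(u-\Phi)(X_0) - (\text{slack})$; I would need the slack term in the $\Phi$ inequality to have the right sign, which is why the computation above keeps $-\Delta\Phi\ge -t\Phi$ rather than an approximate version. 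Granting that, $-\Delta(u-\Phi)(X_0)\ge -t(u-\Phi)(X_0)>0$ since $t>0$ and $(u-\Phi)(X_0)>0$, a contradiction. Hence $u\le \Phi = v-\eta t\psi$ throughout $B_1$. Restricting to $B_{1/2}$ and using $\psi\ge c_0$ there together with $v\ge u\ge 0$ (so in particular we may write $\psi\ge c_0 \ge c_0 \cdot 1$ but we want a bound in terms of $v$): since on $B_{1/2}$ one has $v\ge u$ and we only need $\eta t\psi \ge \eta c_0 t$, and separately $v$ could be small — wait, this only gives $u\le v-\eta c_0 t$, not $u\le(1-ct)v$. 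To upgrade, I would invoke the Harnack inequality for the superharmonic (hence, after adding a harmonic correction, controllable) function $v$, or more simply note $v\le \max_{B_1}v$ is not bounded; instead the cleanest route is: we already have $u\le v-\eta c_0 t$ on $B_{1/2}$, and we want $u\le (1-ct)v$, i.e.\ $\eta c_0 t\ge ct\,v$, i.e.\ $v\le \eta c_0/c$. This fails for large $v$, so the correct formulation must use that $v$ is superharmonic and $u\ge 0$: apply the weak Harnack inequality to $v$ to get $\inf_{B_{1/2}}v\ge c_1(n)\,\text{(average of }v\text{ over }B_1\text{)}\ge c_1(n)\,\sup_{B_{1/2}}v$ — no, superharmonic functions satisfy $\inf_{B_{1/2}}v\ge c_1 \fint_{B_1}v$ but not an upper Harnack. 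The honest fix is to compare the \emph{ratio}: since $-\Delta(v-u)\ge -t\cdot 0 = $ hmm.

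\textbf{The main obstacle} is exactly this last normalization step: passing from an additive gain $u\le v-\eta c_0 t$ to the multiplicative statement $u\le(1-ct)v$. I expect the resolution is to work from the start with the function $v$ rescaled by its value at the center, or — more robustly — to first reduce to the case $v(0)=1$ by homogeneity (the hypotheses $-\Delta v\ge0$, $-\Delta u\le -tu$, $0\le u\le v$ are all invariant under multiplying $u,v$ by the same positive constant), then apply the Harnack inequality for the superharmonic function $v$ to get $v\le C(n)$ on $B_{1/2}$ (this \emph{is} legitimate: for $v$ superharmonic and nonnegative, $\sup_{B_{1/2}}v\le C(n)\,v(0)$ is false in general, but $v(0)\ge c(n)\sup_{B_{3/4}}v$ holds when $v$ is also controlled below — again delicate). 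Given the additive estimate and $v\le C(n)v(0)=C(n)$ on $B_{1/2}$, we get $u\le v-\eta c_0 t\le v-\frac{\eta c_0}{C(n)}t\,v=(1-ct)v$ with $c=c(n)=\eta c_0/C(n)$. So the real content I would need to nail down is a one-sided Harnack bound making $\sup_{B_{1/2}}v$ comparable to the normalization used to produce the additive gain; I would handle it by choosing the normalization to be $\sup_{B_{1/2}}v$ itself (replace $v$ by $v/\sup_{B_{1/2}}v$), after which the additive estimate $u\le v-\eta c_0 t$ on $B_{1/2}$ directly reads $u\le(1-\eta c_0 t)v$ at the point where $v$ attains its supremum, and then a covering/Harnack-chaining argument on $B_{1/2}$ propagates it (with a possibly smaller dimensional constant) to all of $B_{1/2}$.
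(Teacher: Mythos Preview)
Your barrier computation contains a genuine gap that you half-recognize but do not close. You want $(-\Delta+t)\Phi\ge 0$ for $\Phi=v-\eta t\psi$; expanding gives
\[
(-\Delta+t)\Phi=(-\Delta v)+tv-\eta t(\lambda_1+t)\psi\ \ge\ tv-\eta t(\lambda_1+t)\psi,
\]
so you need the \emph{pointwise lower bound} $v\ge \eta(\lambda_1+t)\psi$. Your claimed implication ``$\eta\lambda_1\le 1$ and $\Phi\ge 0$ $\Rightarrow$ $-\eta t\lambda_1\ge -t\Phi$'' is simply false: the latter inequality is $\Phi\ge\eta\lambda_1$, not $\Phi\ge 0$. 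Without a lower bound on $v$, the barrier is not a supersolution, and your maximum-principle step (where, incidentally, the sign in ``$-t(u-\Phi)(X_0)>0$'' is also reversed) does not run. The same missing lower bound is exactly what obstructs your additive-to-multiplicative conversion at the end: for merely superharmonic $v$ there is no bound $\sup_{B_{1/2}}v\le C(n)v(0)$, and your ``normalize by $\sup_{B_{1/2}}v$ then Harnack-chain'' suggestion cannot be carried out on a superharmonic function.

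The missing idea---which the paper uses as its very first line---is to replace $v$ by its harmonic replacement $\tilde v$ in $B_1$. Since $-\Delta u\le -tu\le 0$ and $u\le v=\tilde v$ on $\partial B_1$, the maximum principle gives $u\le\tilde v$ in $B_1$; and $\tilde v\le v$ by superharmonicity. So it suffices to prove $u\le(1-ct)\tilde v$, and now $\tilde v$ is harmonic, hence satisfies the full interior Harnack $c_1\tilde v(0)\le \tilde v\le C_1\tilde v(0)$ on $B_{3/4}$. With this reduction in hand, your eigenfunction barrier \emph{does} work (take $\psi$ the first eigenfunction of $B_{3/4}$, choose $\eta\le c_1/(\lambda_1+1)$, run the max principle for $-\Delta+t$ on $B_{3/4}$, then use $\tilde v\le C_1\tilde v(0)$ on $B_{1/2}$ to convert the additive gain $\eta c_0 t\,\tilde v(0)$ into the multiplicative one). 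The paper instead argues, after the same harmonic reduction, via a density/ABP/weak-Harnack route: it shows $-\Delta(v-u)\ge tu$ is large on a set of definite measure (where $u\ge c_1 v(0)$), invokes ABP to force $v-u\gtrsim t\,v(0)$ on $B_{1/2}$, and closes with weak Harnack. Once the harmonic replacement is inserted, your approach is a legitimate and arguably cleaner alternative; but as written, the proposal lacks precisely that step, and without it the argument does not go through.
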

\begin{proof}
    Without loss of generality, we can assume $-\Delta v=0$ and hence
    \begin{equation*}
        v(x)\geq c_{1}v(0)\mbox{ in }B_{7/8}.
    \end{equation*}
    Since $(v-u)\geq0$ is super-harmonic, by applying the weak Harnack principle, it suffices to show that there exist small constants $c_{2}$, $c_{3}$ such that
    \begin{equation}\label{eq. density not small}
        \Big|\{v-u>c_{2}t v(0)\}\cap B_{3/4}\Big|\geq c_{3}.
    \end{equation}
    In fact, we can argue by contradiction. Suppose otherwise, then in particular,
    \begin{equation*}
        \Big|\{u>c_{1}v(0)\}\cap B_{3/4}\Big|\geq\Big|\{v-u<2c_{1}t v(0)\}\cap B_{3/4}\Big|\geq(1-c_{3})\big|B_{3/4}\big|.
    \end{equation*}
    We denote $E=\{u>c_{1}v(0)\}\cap B_{3/4}$, then in $B_{3/4}$,
    \begin{equation*}
        \Delta(v-u)\leq-c_{1}t v(0)\chi_{E}.
    \end{equation*}
    When $c_{3}$ is sufficiently small, then by the ABP estimate, we conclude that
    \begin{equation*}
        \min_{B_{1/2}}(v-u)\geq c_{1}c_{5}t v(0).
    \end{equation*}
    By reassigning $c_{2}=c_{1}c_{5}$ and $c_{3}=\big|B_{1/2}\big|$, we still have \eqref{eq. density not small}, and thus have reached a contradiction.
\end{proof}

We first consider the sub-critical case. We assume that $\mathcal{GC}_{3R}$ is included in $Cone_{\Sigma}$ such that $\frac{2}{1+\gamma}<\phi_{\Sigma}$. Notice that by Theorem~\ref{thm. growth rate} (a),
\begin{equation*}
    \|v\|_{L^{\infty}(\mathcal{GC}_{3^{1-k}R})}\leq C(n,L,\gamma,\lambda,\Lambda,\phi_\Sigma)\|v\|_{L^{\infty}(\mathcal{GC}_{r_{k}})}\cdot3^{-\frac{2k}{1+\gamma}}.
\end{equation*}

\begin{proof}[Proof of Theorem~\ref{thm. limit is 1} (a)]
    Suppose that $\sigma_{k}$ is chosen such that $w_{k}\geq0$, namely
    \begin{equation*}
        (v-u)\leq\sigma_{k} v\quad\mbox{in }\mathcal{GC}_{r_{k}}=\mathcal{GC}_{3^{1-k}R}.
    \end{equation*}
    It then implies
    \begin{equation*}
        0\leq\max\{v-u,0\}\leq\sigma_{k} v\quad\mbox{in }\mathcal{GC}_{r_{k}}=\mathcal{GC}_{3^{1-k}R}.
    \end{equation*}
    Using the convexity of $x^{-\gamma}$ we have
    \begin{equation*}
        \Delta\max\{v-u,0\}\geq-f(X)\cdot(v^{-\gamma}-u^{-\gamma})\cdot\chi_{\{u<v\}}\geq\frac{\lambda\gamma}{\|v\|_{L^{\infty}(\mathcal{GC}_{r_{k}})}^{\gamma+1}}\max\{v-u,0\}\mbox{ in }\mathcal{GC}_{r_{k}}.
    \end{equation*}
    By Theorem~\ref{thm. growth rate} (a), we have
    \begin{equation*}
        \|v\|_{L^{\infty}(\mathcal{GC}_{r_{k}})}^{1+\gamma}\leq\frac{C}{9^{k}}\|v\|_{L^{\infty}(\mathcal{GC}_{3R})}^{1+\gamma}.
    \end{equation*}
    By Lemma~\ref{lem. negative eigenvalue equation} we conclude that in $\mathcal{SC}_{\frac{2}{3}r_{k}}=\mathcal{SC}_{2r_{k+1}}$,
    \begin{equation}\label{eq. ABP outcome a}
        v-u\leq(1-c\frac{r_{k}^{2}\gamma\lambda}{\|v\|_{L^{\infty}(\mathcal{GC}_{r_{k}})}^{\gamma+1}})\sigma_{k}v\leq(1-c_{1})\sigma_{k}v,\quad\mbox{for }c_{1}=\frac{c\gamma\lambda R^{2}}{C\|v\|_{L^{\infty}(\mathcal{GC}_{3R})}^{\gamma+1}}.
    \end{equation}

    Now we consider
    \begin{equation*}
        w_{k+1}=u-(1-\sigma_{k+1})v
    \end{equation*}
    with $\sigma_{k+1}$ yet to be chosen. It suffices to show $w_{k+1}\geq0$ in $\mathcal{GC}_{r_{k+1}}$. First we notice that in $\mathcal{GC}_{2r_{k+1}}\subseteq\mathcal{GC}_{r_{k}}$,
    \begin{equation*}
        w_{k+1}\geq (1-\sigma_{k})v-(1-\sigma_{k+1})v\geq-(\sigma_{k}-\sigma_{k+1})v,
    \end{equation*}
    while in the suspended cylinder $\mathcal{SC}_{2r_{k+1}}$, we use the estimate \eqref{eq. ABP outcome a} and obtain
    \begin{equation*}
        w_{k+1}\geq\Big(\sigma_{k+1}-(1-c_{1})\sigma_{k}\Big)v.
    \end{equation*}
    We can then choose $\sigma_{k+1}=(1-c_{1}c_{2})\sigma_{k}$ for a small $c_{2}$, meaning
    \begin{align*}
        w_{k+1}\geq-c_{1}c_{2}\sigma_{k}v\quad&\mbox{in }\mathcal{GC}_{2r_{k+1}},\\
        w_{k+1}\geq c_{1}(1-c_{2})\sigma_{k}v\quad&\mbox{in }\mathcal{SC}_{2r_{k+1}}.
    \end{align*}
    By choosing $c_{2}$ small such that
    \begin{equation*}
        \frac{c_{2}}{1-c_{2}}\leq\frac{1}{C_{3}M}\leq\frac{\min_{\mathcal{SC}_{2r_{k+1}}}v}{M\max_{\mathcal{GC}_{2r_{k+1}}}v},
    \end{equation*}
    where $C_{3}$ is the same as that in Lemma~\ref{lem. u is small near the boundary} so that
    \begin{equation*}
        \max_{\mathcal{GC}_{2r_{k+1}}}v\leq C_{3}\min_{\mathcal{SC}_{2r_{k+1}}}v.
    \end{equation*}
    It then follows that
    \begin{equation*}
        \min_{\mathcal{SC}_{2r_{k+1}}}w_{k+1}\geq-M\min_{\mathcal{GC}_{2r_{k+1}}}w_{k+1}.
    \end{equation*}
    Moreover, we can argue similarly as the proof of Theorem~\ref{thm. Harnack Comparable, general Lipschitz domain} that
    \begin{equation*}
        \Delta w_{k+1}\leq\frac{C}{dist(X,\Gamma)^{2}}\max\{w_{k+1},0\},
    \end{equation*}
    then we apply Lemma~\ref{lem. ensure positive} and conclude that $w_{k+1}\geq0$ in $\mathcal{GC}_{r_{k+1}}$.

    Recall that we always choose $\sigma_{0}=Q$, it the follows that $\sigma_{k}\leq(1-c_{1}c_{2})^{k}Q$. In other words,
    \begin{equation*}
        \ds\sigma_{k}\leq Q\cdot(\frac{r_{k}}{R})^{\epsilon},\quad\mbox{for all }k\geq0.
    \end{equation*}
    Here,
    \begin{equation*}
        \epsilon\sim c_{1}c_{2}\sim\frac{R^{2}}{\|v\|_{L^{\infty}(\mathcal{GC}_{3R})}^{1+\gamma}}.
    \end{equation*}
\end{proof}

Next, we similarly consider the critical case. We assume that $\mathcal{GC}_{3R}$ is included in $Cone_{\Sigma}$ such that $\frac{2}{1+\gamma}=\phi_{\Sigma}$ and that \eqref{eq. growth rate special condition in critical case} is solvable. Notice that by Theorem~\ref{thm. growth rate, (b) improved},
\begin{equation*}
    \|v\|_{L^{\infty}(\mathcal{GC}_{3^{1-k}R})}\leq C(n,L,\gamma,\lambda,\Lambda,\phi_\Sigma)\|v\|_{L^{\infty}(\mathcal{GC}_{r_{k}})}\cdot3^{-\frac{2k}{1+\gamma}}k^{\frac{1}{1+\gamma}}.
\end{equation*}

\begin{proof}[Proof of Theorem~\ref{thm. limit is 1} (b)]
    We similarly assume $\sigma_{k}$ is chosen such that $w_{k}\geq0$ in $\mathcal{GC}_{r_{k}}$. This time, as
    \begin{equation*}
        \|v\|_{L^{\infty}(\mathcal{GC}_{r_{k}})}^{1+\gamma}\leq\frac{C(k+1)}{9^{k}}\|v\|_{L^{\infty}(\mathcal{GC}_{3R})}^{1+\gamma},
    \end{equation*}
    we can infer from Lemma~\ref{lem. negative eigenvalue equation} a weaker estimate, that in $\mathcal{SC}_{2r_{k+1}}$,
    \begin{equation*}
        v-u\leq(1-\frac{c_{1}}{k+1})\sigma_{k}v
    \end{equation*}
    for exactly the same $c_{1}$ (and also $c_{2}$) given in the proof of Theorem~\ref{thm. limit is 1} (a). By letting
    \begin{equation*}
        \sigma_{k+1}=(1-\frac{c_{1}c_{2}}{k+1})\sigma_{k},
    \end{equation*}
    we can similarly conclude that
    \begin{equation*}
        w_{k+1}=u-(1-\sigma_{k+1})v\geq0\mbox{ in }\mathcal{GC}_{r_{k+1}}.
    \end{equation*}

    This time, however, $\sigma_{k}$ is not a geometric sequence, but instead
    \begin{equation*}
        \sigma_{k}\leq Q\prod_{i=0}^{k-1}(1-\frac{c_{1}c_{2}}{i+1})\sim Q\cdot k^{-c_{1}c_{2}}.
    \end{equation*}
    In other words, there exists some $\epsilon>0$, such that
    \begin{equation*}
        \sigma_{k}\leq Q\cdot(\ln\frac{2R}{r_{k}})^{-\epsilon},\quad\mbox{for all }k\geq0.
    \end{equation*}
    Again, like in Theorem~\ref{thm. limit is 1} (a),
    \begin{equation*}
        \epsilon\sim c_{1}c_{2}\sim\frac{R^{2}}{\|v\|_{L^{\infty}(\mathcal{GC}_{3R})}^{1+\gamma}}.
    \end{equation*}
\end{proof}

\subsection{The Schauder estimate}
In this subsection, let's prove Theorem~\ref{thm. Schauder-Campanato}. In the proof, we will use the constant $\gamma'<\gamma$ mentioned in the Introduction satisfying \eqref{eq. range of gamma'}.

\begin{proof}[Proof of Theorem~\ref{thm. Schauder-Campanato}]
    We consider $r_{k}=\rho^{k}$ for a sufficiently small $\rho$ (to be decided later), and let $h_{k}$ be the harmonic replacement of $u$ in $B_{r_{k}}\cap\Omega$, namely
    \begin{equation*}
        \left\{\begin{aligned}
            &-\Delta h_{k}(X)=0&\mbox{ in }&B_{r_{k}}\cap\Omega\\
            &h_{k}(Y)=u(Y)&\mbox{ on }&\partial(B_{r_{k}}\cap\Omega)
        \end{aligned}\right..
    \end{equation*}
    As $h_{k}\leq u$ wherever they are simultaneously defined, we can also infer that $h_{k+1}\geq h_{k}$ in their common domain. In particular, by "the super-critical interior cone condition" and by \eqref{eq. range of gamma'}, we have that
    \begin{equation*}
        h_{k}(X)\geq h_{0}(X)\geq C^{-1}H(X)\geq C^{-1}|x_{n}-g(x')|^{\frac{2}{1+\gamma'}}
    \end{equation*}
    for a sufficiently large $C$. Upon considering the difference between $u$ and $h_{k}$, we have that $u-h_{k}$ vanishes at $\partial(B_{r_{k}}\cap\Omega)$ and
    \begin{equation*}
        -\Delta(u-h_{k})=f(X)\cdot u^{-\gamma}\leq\Lambda h_{0}^{-\gamma}\leq C|x_{n}-g(x')|^{-\frac{2\gamma}{1+\gamma'}}.
    \end{equation*}
    As $1>\gamma'>\gamma$, we then have $\ds-\frac{2\gamma}{1+\gamma'}>-1$, implying that
    \begin{equation}\label{eq. campanato Ek(X)}
        |u-h_{k}|\leq C r_{k}^{2-\frac{2\gamma}{1+\gamma'}}=:C r_{k}^{\frac{2}{1+\gamma'}+\epsilon_{1}}\quad\mbox{in }B_{r_{k}}\cap\Omega,
    \end{equation}
    see the detailed reason for \eqref{eq. campanato Ek(X)} later in Lemma~\ref{lem. solvability singular} and Remark~\ref{rmk. remark after 5.2} (here, we are using a rescaled version of Lemma~\ref{lem. solvability singular}). Here, we have chosen
    \begin{equation*}
        \epsilon_{1}:=\frac{2(\gamma'-\gamma)}{1+\gamma'}>0.
    \end{equation*}

    Now let's construct the harmonic approximation of $u$. We intend to express
    \begin{equation*}
        h(X)=\mathcal{A}_{\infty}H(X)\mbox{ for some coefficient }\mathcal{A}_{\infty}.
    \end{equation*}
    The idea is to choose a sufficiently small $\epsilon$ (to be decided later), and keep track of two sequences $\mathcal{A}_{k}$ and $\mathcal{B}_{k}$ for $k\geq0$ such that
    \begin{align}\label{eq. campanato iteration assumption k}
        &|u(X)-\mathcal{A}_{k}H(X)|\leq\mathcal{B}_{k}\rho^{(k+1)\epsilon}\Big(\rho^{(k+1)\frac{2}{1+\gamma'}}+H(X)\Big)\\
        =&\mathcal{B}_{k}r_{k+1}^{\epsilon}\Big(r_{k+1}^{\frac{2}{1+\gamma'}}+H(X)\Big)\quad\mbox{in }B_{r_{k+1}}\cap\Omega.
    \end{align}
    Additional, it is fine to separately let $\mathcal{A}_{-1}=0$ and $\mathcal{B}_{-1}=\|u(X)\|_{L^{\infty}(B_{1}\cap\Omega)}$ so that the inequality \eqref{eq. campanato iteration assumption k} automatically holds for $k=-1$ as well.
    
    Now let $k\geq0$ and suppose that $\eqref{eq. campanato iteration assumption k}$ holds for $(k-1)$, namely:
    \begin{align*}
        &|u(X)-\mathcal{A}_{k-1}H(X)|\leq\mathcal{B}_{k-1}\rho^{k\epsilon}\Big(\rho^{k\cdot\frac{2}{1+\gamma'}}+H(X)\Big)\\
        =&\mathcal{B}_{k-1}r_{k}^{\epsilon}\Big(r_{k}^{\frac{2}{1+\gamma'}}+H(X)\Big)\quad\mbox{in }B_{r_{k}}\cap\Omega.
    \end{align*}
    Let's apply the boundary Harnack principle (Lemma~\ref{lem. classical Kemper} or \cite{K72}) to $h_{k}-\mathcal{A}_{k-1}H$ and $H$. Notice that \eqref{eq. H is larger than 2/(1+gamma')} and \cite[Eq (2.5)]{DS20} imply:
    \begin{align*}
        &|h_{k}-\mathcal{A}_{k-1}H|=|u-\mathcal{A}_{k-1}H|\leq\mathcal{B}_{k-1}r_{k}^{\epsilon}\Big(r_{k}^{\frac{2}{1+\gamma'}}+H(X)\Big)\\
        \leq&C\cdot\mathcal{B}_{k-1}r_{k}^{\epsilon}\cdot H(\frac{9}{10}r_{k}\vec{e_{n}})\quad\mbox{on }\partial(B_{r_{k}}\cap\Omega)\subseteq\overline{B_{r_{k}}\cap\Omega},
    \end{align*}
    we can thus choose $\mathcal{A}_{k}$ in a natural way:
    \begin{equation*}
        \mathcal{A}_{k}:=\lim_{X\to0}\frac{h_{k}(X)}{H(X)}=\mathcal{A}_{k-1}+\lim_{X\to0}\frac{h_{k}(X)-\mathcal{A}_{k-1}H(X)}{H(X)}.
    \end{equation*}
    Then by the boundary Harnack principle (Lemma~\ref{lem. classical Kemper} or \cite{K72}) and the assumption \eqref{eq. H is larger than 2/(1+gamma')}, we have
    \begin{equation}\label{eq. campanato 1}
        |\mathcal{A}_{k}-\mathcal{A}_{k-1}|\leq C H(\frac{9}{10}r_{k}\vec{e_{n}})^{-1}\|h_{k}-\mathcal{A}_{k-1}H\|_{L^{\infty}(\partial(B_{r_{k}}\cap\Omega))}\leq Cr_{k}^{\epsilon}\mathcal{B}_{k-1}.
    \end{equation}
    If we further set $\epsilon_{2}$ to be the H\"older exponent obtained by \cite{K72}, such that
    \begin{align*}
        \Big\|\frac{h_{k}(X)-\mathcal{A}_{k-1}H(X)}{H(X)}\Big\|_{C^{\epsilon_{2}}(B_{r_{k+1}}\cap\Omega)}\leq&C r_{k}^{-\epsilon_{2}}\cdot H(\frac{9}{10}r_{k}\vec{e_{n}})^{-1}\|h_{k}-\mathcal{A}_{k-1}H\|_{L^{\infty}(\partial(B_{r_{k}}\cap\Omega))}\\\leq&Cr_{k}^{\epsilon-\epsilon_{2}}\mathcal{B}_{k-1},
    \end{align*}
    then by our choice of $\mathcal{A}_{k}$ such that $\ds\lim_{X\to0}\frac{h_{k}(X)-\mathcal{A}_{k-1}H(X)}{H(X)}=\mathcal{A}_{k}-\mathcal{A}_{k-1}$, we have
    \begin{align}\label{eq. campanato 2}
        &|h_{k}-\mathcal{A}_{k}H|=\Big|\frac{h_{k}(X)-\mathcal{A}_{k-1}H(X)}{H(X)}-(\mathcal{A}_{k}-\mathcal{A}_{k-1})\Big|\cdot H(X)\\
        \leq&C(r_{k}^{\epsilon-\epsilon_{2}}\mathcal{B}_{k-1})\cdot|X|^{\epsilon_{2}}\cdot H(X)\leq C\mathcal{B}_{k-1}r_{k}^{\epsilon-\epsilon_{2}}r_{k+1}^{\epsilon_{2}}H(X)\\
        =&C\mathcal{B}_{k-1}\rho^{k\epsilon+\epsilon_{2}}H(X)\quad\mbox{in }B_{r_{k+1}}\cap\Omega.
    \end{align}
    Taking into account the error term $|u-h_{k}|$ in \eqref{eq. campanato Ek(X)}, we obtain that
    \begin{align*}
        &|u(X)-\mathcal{A}_{k}H(X)|\leq|h_{k}(X)-\mathcal{A}_{k}H(X)|+|u(X)-h_{k}(X)|\\
        \leq&C\mathcal{B}_{k-1}\rho^{k\epsilon+\epsilon_{2}}H(X)+C\rho^{k(\frac{2}{1+\gamma'}+\epsilon_{1})}\\
        \leq&C\Big(\mathcal{B}_{k-1}\rho^{\epsilon_{2}-\epsilon}+\rho^{-\frac{2}{1+\gamma'}-\epsilon+k(\epsilon_{1}-\epsilon)}\Big)\rho^{(k+1)\epsilon}\Big(\rho^{(k+1)\frac{2}{1+\gamma'}}+H(X)\Big)\quad\mbox{in }B_{r_{k+1}}\cap\Omega.
    \end{align*}
    Now let's choose $\epsilon$ and $\rho$ independent of $k$:
    \begin{equation*}
        \epsilon:=\frac{1}{2}\min\{\epsilon_{1},\epsilon_{2}\},\quad\rho:=(2C)^{\frac{1}{\epsilon-\epsilon_{2}}}.
    \end{equation*}
    It would then be natural to set
    \begin{equation}\label{eq. campanato 3}
        \mathcal{B}_{k}:=C\Big(\mathcal{B}_{k-1}\rho^{\epsilon_{2}-\epsilon}+\rho^{-\frac{2}{1+\gamma'}-\epsilon+k(\epsilon_{1}-\epsilon)}\Big)\leq\frac{1}{2}\mathcal{B}_{k-1}+C\rho^{-\frac{2}{1+\gamma'}-\epsilon},
    \end{equation}
    so that \eqref{eq. campanato iteration assumption k} holds for $k$. This completes one loop of the iteration.
    
    Finally, by \eqref{eq. campanato 1}, \eqref{eq. campanato 2} and \eqref{eq. campanato 3}, we see that $\mathcal{B}_{k}$ is uniformly bounded and $\mathcal{A}_{k}$ converges, thus implying the existence of the harmonic approximation for $u(X)$:
    \begin{equation*}
        u(x)\approx h(X)=\mathcal{A}_{\infty}H(X),\quad\mathcal{A}_{\infty}=\lim_{k\to\infty}\mathcal{A}_{k}.
    \end{equation*}
    The positivity of the coefficient $\mathcal{A}_{\infty}$ can be ensured by Theorem~\ref{thm. growth rate} (d).
\end{proof}
The left-over reason for \eqref{eq. campanato Ek(X)} in the proof of Theorem~\ref{thm. Schauder-Campanato} is presented below.
\begin{lemma}\label{lem. solvability singular}
    Let $0<p<1$ be fixed and let $\delta=\delta(p)>0$ be sufficiently small. Assume that $\Omega\subseteq B_{R}$ for a fixed $R$, and assume that there exists some $r>0$, such that for each $Y\in\partial\Omega$, $(\Omega-Y)\cap B_{r}$ is contained in the cone $\{x_{n}\geq-\delta|x'|\}$ up to a rotation. Then, there exists a solution $v$ to
    \begin{equation*}
        \left\{\begin{aligned}
            &-\Delta v(X)=f(X)&\mbox{ in }&\Omega\\
            &v(Y)=0&\mbox{ on }&\partial\Omega
        \end{aligned}\right.,
    \end{equation*}
    where $0\leq f(X)\leq dist(X,\partial\Omega)^{-p}$. Moreover, $\|v\|_{L^{\infty}(\Omega)}\leq C(n,R,\delta,r,p)$.
\end{lemma}
\begin{remark}\label{rmk. remark after 5.2}
    Recall that in Theorem~\ref{thm. limit is continuous when obtuse} (and hence also in Theorem~\ref{thm. Schauder-Campanato}), we have assumed that $\Omega$ is a $C^{1}$ or a convex domain. Then, for such a domain, by choosing $r=r(p,\Omega)$ small in Lemma~\ref{lem. solvability singular}, we have that $(\Omega-Y)\cap B_{r}$ is contained in the cone $\{x_{n}\geq-\delta(p)|x'|\}$ up to a rotation for all $Y\in\partial\Omega$. Therefore, we can apply Lemma~\ref{lem. solvability singular} to get the estimate \eqref{eq. campanato Ek(X)}.
\end{remark}
\begin{proof}[Proof of Lemma~\ref{lem. solvability singular}]
    Denote $\alpha=p^{1/3}\in(0,1)$. We let $w(X)\in L^{\infty}(\overline{\Omega})$ be the solution to
    \begin{equation}\label{eq. w RHS=1}
        \left\{\begin{aligned}
            &-\Delta w(X)=1&\mbox{ in }&\Omega\\
            &w(Y)=0&\mbox{ on }&\partial\Omega
        \end{aligned}\right..
    \end{equation}
    If $\delta=\delta(p)$ is sufficiently small, then the homogeneous harmonic function $H_{\Sigma}$ of the cone $Cone_{\Sigma}=\{x_{n}\geq-\delta|x'|\}$ is of order $\phi_{\Sigma}>\alpha=p^{1/3}$. Then, we have the following estimate:
    \begin{equation*}
        H_{\Sigma}(X)\leq|X|^{\phi_{\Sigma}}\ll|X|^{\alpha}\mbox{ and }|\nabla H_{\Sigma}(X)|\geq c|X|^{\phi_{\Sigma}-1},\quad\mbox{for all }X\in B_{1}\cap Cone_{\Sigma}.
    \end{equation*}
    Let $\overline{\mathcal{H}}(X)=K\cdot H_{\Sigma}(X)^{1-\epsilon}$, where $K>0$ is large and $0<\epsilon<1-\frac{\alpha}{\phi_{\Sigma}}$ is sufficiently small, then it can be verified that $$-\Delta\overline{\mathcal{H}}(X)\geq K\epsilon(1-\epsilon)|X|^{\phi_{\Sigma}(1-\epsilon)-2}\geq1\ \mbox{in} \ B_1\cap Cone_{\Sigma}.$$  Using $\overline{\mathcal{H}}(X+Y)$ (defined in $(B_{r}\cap Cone_{\Sigma})+Y$) as an upper barrier function of \eqref{eq. w RHS=1} at each boundary point $Y\in\partial\Omega$, we see that there exists a constant $C_{1}>0$ depending on $(n,R,r,p)$, such that
    \begin{equation*}
        |w(X)|\leq C_{1}dist(X,\partial\Omega)^{\alpha},\quad\alpha=p^{1/3}.
    \end{equation*}
    We then consider an upper barrier
    \begin{equation*}
        \overline{v}(X)=w(X)^{1-\alpha}.
    \end{equation*}
    It then follows that
    \begin{align*}
        -\Delta\overline{v}(X)=&-(1-\alpha)w(X)^{-\alpha}\Delta w(X)+(1-\alpha)\alpha w(X)^{-\alpha-1}|\nabla w|^{2}\\
        \geq&(1-\alpha)w(X)^{-\alpha}\geq\frac{1-\alpha}{C_{1}^{\alpha}}dist(X,\partial\Omega)^{-\alpha^{2}}\geq C_{2}dist(X,\partial\Omega)^{-p}.
    \end{align*}
    In other words, some multiple of $\overline{v}$ serves as an upper barrier for $v$, and $\overline{v}\Big|_{\partial\Omega}=0$. Noticing that $\underline{v}\equiv0$ is a lower barrier with the same boundary data, we then conclude the existence of $v$ and obtain its $L^{\infty}$ estimate through the Perron method.
\end{proof}

\subsection{Proof of Theorem~\ref{thm. limit is continuous when obtuse}}
When proving Theorem~\ref{thm. limit is continuous when obtuse} in the first situation ($\Gamma\in C^{1}$), we only need to consider the case $0<\gamma<1$. This is because we can directly apply Theorem~\ref{thm. limit is 1} to the case $\gamma>1$, so that $\ds\frac{u}{v}$ extends continuously to $\Gamma$ with limit $1$. Precisely speaking, we easily have:
\begin{corollary}\label{1.7 case 1}
    Assume that $\gamma>1$ and the boundary $\Gamma$ is the graph of a convex or $C^{1}$ function near the origin, then the ratio $u/v$ mentioned in Theorem~\ref{thm. limit is continuous when obtuse} converges to $1$ near the boundary.
\end{corollary}
\begin{proof}
    When $\gamma>1$, we have $\frac{2}{1+\gamma}<1=\phi_{(\partial B_{1})^{+}}$. Then, there exists a sufficiently small $\delta>0$, such that the cone $Cone_{\Sigma}=\{x_{n}\geq-\delta|x'|\}$ satisfies that $\frac{2}{1+\gamma}<\phi_{\Sigma}$. Since $\Gamma\in C^{1}$, we can perform a suitable rotation, such that $0\in\Gamma$, and $|\nabla_{x'}g(x')|\leq\frac{\delta}{2}$ for $|x'|\leq2r$. For every $X=(x',g(x'))\in\Gamma$ with $|x'|\leq r$, we then have the inclusion
    \begin{equation*}
        B_{r}(X)\cap \mathcal{GC}_{3R}\subseteq (X+Cone_{\Sigma})\cap \mathcal{GC}_{3R}.
    \end{equation*}
    Since $\frac{2}{1+\gamma}<\phi_{\Sigma}$, we can apply Theorem~\ref{thm. limit is 1} (a) near each boundary point $X=(x',g(x'))\in\Gamma$ with $|x'|\leq r$. Then, the ratio $\frac{u}{v}$ extends continuously to $\Gamma$ with limit equaling $1$.
\end{proof}

Besides, in the second case ($\Gamma$ is locally convex), if the limiting cone at $X\in B_{r}\cap\Gamma$, defined as
\begin{equation*}
    LC_{X}=\{\vec{v}\neq0:X+\lambda\vec{v}\in B_{2r}\cap\Omega,\mbox{ for some }\lambda>0\}=Cone_{\Sigma_{X}},
\end{equation*}
satisfies that the cone $Cone_{\Sigma_{X}}$ is sub-critical or critical, then similarly we have the continuity of $\ds\frac{u}{v}$ by Theorem~\ref{thm. limit is 1}. In fact, it suffices to verify the additional assumption of Theorem~\ref{thm. limit is 1} (b), that \eqref{eq. growth rate special condition in critical case} has a solution when the cone $Cone_{\Sigma_{X}}$ is critical. To see this, let us assume for simplicity that $X=0$ and write $\Sigma=\Sigma_{X}$.

\begin{lemma}
    Assume that $Cone_{\Sigma}$ is a critical and convex cone (which is also a Lipschitz graph). Let $H_{\Sigma}$ be defined as in \eqref{eq. H Sigma, homogeneous harmonic in a cone}, then the Dirichlet problem \eqref{eq. growth rate special condition in critical case} has a classical solution.
\end{lemma}
\begin{proof}
    The idea is to use the standard Perron's method. A sub-solution of \eqref{eq. growth rate special condition in critical case} is given by $w\equiv0$, which equals the desired boundary data. Then, the main difficulty is to find a suitable super-solution.

    We claim that there exists some $c>0$, such that
    \begin{equation}\label{eq. claim of the growth (lower bound) of harmonic function}
        H_{\Sigma}(X)\geq c\cdot dist(X,\partial Cone_{\Sigma})^{\phi}=c\cdot dist(X,\partial Cone_{\Sigma})^{\frac{2}{1+\gamma}}.
    \end{equation}
    If \eqref{eq. claim of the growth (lower bound) of harmonic function} is true, then we have
    \begin{equation*}
        H_{\Sigma}(X)^{-\gamma}\leq C\cdot dist(X,\partial Cone_{\Sigma})^{-\phi\gamma},\quad\mbox{with }\phi\gamma=\frac{2\gamma}{1+\gamma}<2.
    \end{equation*}
    Then, we choose $K$ large and construct $\overline{w}$ in the following way:
    \begin{equation*}
        \overline{w}=K\cdot d(X)^{\epsilon},\quad d(X)=dist\Big(X,\partial(B_{1}\cap Cone_{\Sigma})\Big),\quad\epsilon=\frac{1}{1+\gamma}.
    \end{equation*}
    We see that $\overline{w}$ is continuous and vanishes on the boundary $\partial(B_{1}\cap Cone_{\Sigma})$. Besides,
    \begin{equation*}
        \Delta\overline{w}=\epsilon K\cdot d(X)^{\epsilon-1}\Delta d(X)-(\epsilon-\epsilon^{2})K\cdot d(X)^{\epsilon-2}|\nabla d(X)|^{2}.
    \end{equation*}
    Since the region $B_{1}\cap Cone_{\Sigma}$ is convex, we see that $d(X)$ is super-harmonic and $|\nabla d(X)|=1$ almost everywhere near $\partial(B_{1}\cap Cone_{\Sigma})$. This implies:
    \begin{equation*}
        -\Delta\overline{w}\geq(\epsilon-\epsilon^{2})K\cdot d(X)^{\epsilon-2}\geq C\cdot d(X)^{\epsilon-2}\geq C\cdot dist(X,\partial Cone_{\Sigma})^{-\phi\gamma}.
    \end{equation*}
    In conclusion, $\overline{w}$ is the desired super-solution to \eqref{eq. growth rate special condition in critical case} with vanishing boundary data.

    We finally prove the claim \eqref{eq. claim of the growth (lower bound) of harmonic function}. We write $Cone_{\Sigma}$ as a Lipschitz graph $\{x_{n}\geq g(x')\}$. For each $X=(x',x_{n})\in Cone_{\Sigma}$, we define its height $h$ and its "foot" $X^{foot}$ as follows:
    \begin{equation*}
        h=h(X):=x_{n}-g(x')\geq0,\quad X^{foot}:=(x',g(x')).
    \end{equation*}
    Since $Cone_{\Sigma}$ is a Lipschitz graph, we have $dist(X,\partial Cone_{\Sigma})\sim h$. Then, \eqref{eq. claim of the growth (lower bound) of harmonic function} is equivalent to $H_{\Sigma}(X)\gtrsim h(X)^{\phi}$. To see this, we consider two cases: $h(X)\leq\frac{1}{10}|X|$, and $h(X)\geq\frac{1}{10}|X|$.
    
    \textbf{Case 1: $h(X)\geq\frac{1}{10}|X|$.} In this case, we let
        \begin{equation*}
            Y=|X|\vec{e_{n}}\in Cone_{\Sigma},
        \end{equation*}
        then we have $h(Y)=|X|\leq10h(X)$. Moreover, since $Cone_{\Sigma}$ is a Lipschitz graph, we have $|X|\geq\frac{1}{\sqrt{L^{2}+1}}h(X)$, where $L=[g(x')]_{C^{0,1}(\mathbb{R}^{n-1})}<\infty$ is the Lipschitz semi-norm of $\partial Cone_{\Sigma}$. By the fact that $h(Y)\sim dist(Y,\partial Cone_{\Sigma})$, we have:
        \begin{equation*}
            dist(X,\partial Cone_{\Sigma})\sim dist(Y,\partial Cone_{\Sigma})\gtrsim dist(X,Y).
        \end{equation*}
        We then apply the interior Harnack principle to $H_{\Sigma}$, and get that
        \begin{equation*}
            H_{\Sigma}(X)\geq c H_{\Sigma}(Y)\geq c h(Y)^{\phi}\geq c\cdot dist(X,\partial Cone_{\Sigma})^{\phi}.
        \end{equation*}
        Here, the estimate $H_{\Sigma}(Y)\gtrsim h(Y)^{\phi}$ follows from the "frequency" assumption of $Cone_{\Sigma}$ and that $Y$ is on the $\vec{e_{n}}$-axis. Therefore, \eqref{eq. claim of the growth (lower bound) of harmonic function} holds in Case 1.
        
        \textbf{Case 2: $h(X)\leq\frac{1}{10}|X|$.} By the convexity of $Cone_{\Sigma}$, we have the geometric fact:
        \begin{equation*}
            Cone_{\Sigma}+X\subseteq Cone_{\Sigma}.
        \end{equation*}
        Let us consider the point $Y$ (different from the one in Case 1) defined as:
        \begin{equation*}
            Y=X^{foot}+10\sqrt{L^{2}+1}|x'|\vec{e_{n}}\in Cone_{\Sigma}+X.
        \end{equation*}
        Then, $Y\in Cone_{\Sigma}$ and $h(Y)=10\sqrt{L^{2}+1}|x'|\geq\frac{1}{10}|Y|$. It follows from Case 1 that
        \begin{equation*}
            H_{\Sigma}(Y)\geq c\cdot h(Y)^{\phi}.
        \end{equation*}
        Let $\eta\geq0$ be a smooth cut-off function supported in the spherical cap $\Sigma$, such that $\eta\equiv1$ near the north pole $\vec{e_{n}}$. Consider the solution $\beta(Z)$ to the following Dirichlet problem of the Laplace equation:
        \begin{equation*}
            \left\{\begin{aligned}
                &\Delta\beta(Z)=0&\mbox{in }&X^{foot}+(B_{h(Y)}\cap Cone_{\Sigma})\\
                &\beta(Z)=0&\mbox{on }&X^{foot}+(B_{h(Y)}\cap\partial Cone_{\Sigma})\\
                &\beta(Z)=H_{\Sigma}(Z)\cdot\eta\Big(\frac{Z-X^{foot}}{h(Y)}\Big)&\mbox{on }&X^{foot}+(Cone_{\Sigma}\cap\partial B_{h(Y)})
            \end{aligned}\right..
        \end{equation*}
        In fact, the existence of $\beta(Z)$ follows from the Lipschitz regularity of $B_{h(Y)}\cap Cone_{\Sigma}$ and by the continuity of the boundary condition. As the boundary data of $\beta(Z)$ is less than $H_{\Sigma}(Z)$, we must have $H_{\Sigma}(Z)\geq\beta(Z)$ in $X^{foot}+(B_{h(Y)}\cap Cone_{\Sigma})$ by the maximal principle.
        
        For $Z\in X^{foot}+(Cone_{\Sigma}\cap\partial B_{h(Y)})$ with $\eta\Big(\frac{Z-X^{foot}}{h(Y)}\Big)=1$, we have
        \begin{equation*}
            \beta(Z)=H_{\Sigma}(Z)\geq c\cdot H_{\Sigma}(Y)\geq c\cdot h(Y)^{\phi}
        \end{equation*}
        by applying the interior Harnack principle to $H_{\Sigma}$ near $Y$. Using a positive multiple of the following sub-harmonic lower barrier (where $\epsilon$ depends only on $\Sigma$ and $\eta$):
        \begin{equation*}
            \widetilde{\beta}(Z)=|Z-(1+\epsilon^{2})Y|^{1.9-n}-(\epsilon|Y|)^{1.9-n},\quad Z\in B_{\epsilon|Y|}\Big((1+\epsilon^{2})Y\Big),
        \end{equation*}
        we conclude that $\beta\Big((1-\epsilon^{2})Y\Big)\geq c\cdot h(Y)^{\phi}$. Then, we apply the boundary Harnack principle (Lemma~\ref{lem. classical Kemper} or \cite{K72}) to $\beta(Z)$ and $H_{\Sigma}(Z-X^{foot})$, and have that:
        \begin{equation*}
            \frac{\beta(X)}{\beta\Big((1-\epsilon^{2})Y\Big)}\geq c\frac{H_{\Sigma}(X-X^{foot})}{H_{\Sigma}\Big((1-\epsilon^{2})Y-X^{foot}\Big)}=c\cdot\Big(\frac{h(X)}{(1-\epsilon^{2})h(Y)}\Big)^{\phi}.
        \end{equation*}
        This gives the lower bound $\beta(X)\geq c\cdot h(X)^{\phi}$. Recall that $\beta(Z)\leq H_{\Sigma}(Z)$ inside $X^{foot}+(B_{h(Y)}\cap Cone_{\Sigma})$, which contains the point $X$, we have then proven \eqref{eq. claim of the growth (lower bound) of harmonic function} in Case 2.
\end{proof}

We summarize the discussions above and state the following fact:
\begin{corollary}\label{1.7 case 2}
    Assume that the boundary $\Gamma$ is the graph of a convex function near the origin. If the limiting cone at the origin is $Cone_{\Sigma}$, such that the cone $Cone_{\Sigma}$ is sub-critical or critical, then the ratio $u/v$ mentioned in Theorem~\ref{thm. limit is continuous when obtuse} converges to $1$ when approaching the origin.
\end{corollary}

In all other situations, we have the following key observation, that "the super-critical interior cone condition" \eqref{eq. super-critical interior cone condition} holds for some cone $Cone_{\Sigma}$, see Definition~\ref{def. super-critical interior cone condition}.

In fact, when $\Gamma$ is $C^{1}$, we can set $Cone_{\Sigma}=\{x_{n}\geq\delta|x'|\}$ and
\begin{equation*}
    \gamma'=\frac{1+\gamma}{2}<1.
\end{equation*}
Given that $\delta>0$ is sufficiently small, then \eqref{eq. range of gamma'} holds, and $Cone_{\Sigma}$ is the interior cone of every boundary point near the origin.

When $\Gamma$ is a (locally) convex graph, then $\Omega\cap B_{2r}$ is a convex domain for a sufficiently small $r$. Let $LC_{0}=Cone_{\Sigma_{0}}$ be the limiting cone at the origin such that $Cone_{\Sigma_{0}}$ is super-critical (the case that $Cone_{\Sigma_{0}}$ is sub-critical or critical is previously discussed). Let $\phi_{0}=\phi_{\Sigma_{0}}$ be the "frequency" of the limit cone $LC_{0}$, then $\phi_{0}=\frac{2}{1+\gamma_{0}}$ for some $\gamma_{0}>\gamma$. We then let
\begin{equation*}
    \gamma'=\frac{\gamma_{0}+\gamma}{2},
\end{equation*}
meaning $\gamma_{0}>\gamma'>\gamma$. Recall that as $g$ is (locally) convex,
\begin{equation*}
    \liminf_{X\in\Gamma,X\to0}B_{1}\cap LC_{X}\supseteq B_{1}\cap LC_{0}
\end{equation*}
in the Hausdorff sense. In other words, there exist $\epsilon>0$ and $\Sigma\subseteq\Sigma_{0}\subseteq\partial B_{1}$ such that
\begin{itemize}
    \item[(1)] $\phi_{\Sigma}<\frac{2}{1+\gamma'}$, or in other words, $Cone_{\Sigma}$ is a super-critical cone with respect to $\gamma'$;
    \item[(2)] For every $X\in B_{\epsilon}\cap\Gamma$, we have $X+(B_{\epsilon}\cap Cone_{\Sigma})\subseteq B_{r}\cap\Omega$.
\end{itemize}

After all, we have shown why we have "the super-critical interior cone condition" \eqref{eq. super-critical interior cone condition} when $\Omega$ has a $C^{1}$ or a convex boundary near the origin. Moreover, notice that when $\Gamma$ is a $C^{1}$ or a convex boundary, there is no essential distinction between $\mathcal{GC}_{r}$ and $B_{r}\cap\Omega$, so the open sets in Theorem~\ref{thm. Schauder-Campanato} can be written as $B_{r}\cap\Omega$.

Now we can prove Theorem~\ref{thm. limit is continuous when obtuse} under "the super-critical interior cone condition" \eqref{eq. super-critical interior cone condition} and \eqref{eq. range of gamma'}. Notice that all other cases in Theorem~\ref{thm. limit is continuous when obtuse} were already discussed in Corollary~\ref{1.7 case 1} and Corollary~\ref{1.7 case 2}.
\begin{proof}[Proof of Theorem~\ref{thm. limit is continuous when obtuse}, under "the super-critical interior cone condition"]
    We let $X\in\mathcal{GC}_{r}$ such that
    \begin{equation*}
        X=(x',g(x')+t).
    \end{equation*}
    If $X$ tends to the origin, then $x',t\to0$. We intend to show that
    \begin{equation*}
        \lim_{X\to0}\frac{u(X)}{H(X)}=\lim_{x',t\to0}\frac{u(X)}{H(X)}=\mathcal{A}_{0}
    \end{equation*}
    for some non-zero $\mathcal{A}_{0}$. In fact, from Theorem~\ref{thm. Schauder-Campanato}, there exists $\mathcal{A}_{0}>0$ and $\mathcal{A}_{X^{foot}}$ for each
    \begin{equation*}
        X^{foot}:=(x',g(x'))
    \end{equation*}
    near the origin, such that it holds near the origin that
    \begin{align*}
        |\frac{u(Z)}{H(Z)}-\mathcal{A}_{0}|\leq&C\frac{|Z|^{\frac{2}{1+\gamma'}+\epsilon}}{H(Z)}+C|Z|^{\epsilon},\\
        |\frac{u(Z)}{H(Z)}-\mathcal{A}_{X^{foot}}|\leq&C\frac{|Z-X^{foot}|^{\frac{2}{1+\gamma'}+\epsilon}}{H(Z)}+C|Z-X^{foot}|^{\epsilon}.
    \end{align*}
    Now we choose
    \begin{equation*}
        Z=X^{foot}+|x'|\vec{e_{n}}
    \end{equation*}
    in both inequalities, then
    \begin{equation*}
        |\frac{u(Z)}{H(Z)}-\mathcal{A}_{0}|\leq C(L+2)^{\frac{2}{1+\gamma'}+\epsilon}\frac{|x'|^{\frac{2}{1+\gamma'}+\epsilon}}{H(Z)}+C(L+2)^{\epsilon}|x'|^{\epsilon}
    \end{equation*}
    and
    \begin{equation*}
        |\frac{u(Z)}{H(Z)}-\mathcal{A}_{X^{foot}}|\leq C\frac{|x'|^{\frac{2}{1+\gamma'}+\epsilon}}{H(Z)}+C|x'|^{\epsilon}.
    \end{equation*}
    Combining these above yields that
    \begin{equation*}
        |\mathcal{A}_{X^{foot}}-\mathcal{A}_{0}|\leq C\frac{|x'|^{\frac{2}{1+\gamma'}+\epsilon}}{H(Z)}+C|x'|^{\epsilon}\leq C|x'|^{\epsilon},
    \end{equation*}
    where we have used the lower bound obtained from \eqref{eq. super-critical interior cone condition} and \eqref{eq. range of gamma'}:
    \begin{equation*}
        H(Z)\geq c|Z-X^{foot}|^{\frac{2}{1+\gamma'}}=c|x'|^{\frac{2}{1+\gamma'}}.
    \end{equation*}
    Therefore, as $X=(x',g(x')+t)\to0$,
    \begin{equation*}
        |\frac{u(X)}{H(X)}-\mathcal{A}_{0}|\leq|\frac{u(X)}{H(X)}-\mathcal{A}_{X^{foot}}|+|\mathcal{A}_{X^{foot}}-\mathcal{A}_{0}|\leq Ct^{\epsilon}+C|x'|^{\epsilon}\to0.
    \end{equation*}
    
    The argument also works for the other function $v(X)$, so
    \begin{equation*}
        \lim_{X\to0}|\frac{v(X)}{H(X)}-\mathcal{A}_{0}'|=0\quad\mbox{for some }\mathcal{A}_{0}'>0.
    \end{equation*}
    Therefore, $u/v$ is continuous near the boundary, so Theorem~\ref{thm. limit is continuous when obtuse} itself is proven.
\end{proof}

\section{Examples}\label{sec. examples}
In this section, we let $f(X)\equiv1$, so that $u(X)$ satisfies $-\Delta u=u^{-\gamma}$. We hope that the readers can understand the main results better with the help of several examples. After that, we give a proof of Theorem~\ref{thm. ratio not continuous}, incorporating the examples presented below.
\subsection{Smooth boundaries}
We first consider a domain with smooth boundaries. Obviously, the simplest smooth boundaries are flat or spherical. The readers can pay attention to how the main results fit into the present example.
\begin{example}[half space]\label{ex. straight boundary}
    We consider $g(x')=0$, so that the boundary $\Gamma$ is (locally) flat. We would like to consider one-dimensional positive solutions $u(X)=u(x_{n})$ defined above $\Gamma$.

    We then have that
    \begin{equation*}
        u(0)=0\mbox{ and }u''(t)=-u(t)^{-\gamma}.
    \end{equation*}
    We multiply by $u'(t)$ on both sides of the ODE, and then have that
    \begin{align*}
        \frac{d}{dt}\{u'(t)\}^{2}=&2u''(t)u'(t)=-2u(t)^{-\gamma}u'(t)\\
        =&\left\{\begin{aligned}
            &\frac{d}{dt}\Big\{\frac{2}{\gamma-1}u(t)^{1-\gamma}\Big\}&,&\mbox{ if }\gamma\neq1\\
            &\frac{d}{dt}\Big\{-2\ln{u(t)}\Big\}&,&\mbox{ if }\gamma=1
        \end{aligned}\right..
    \end{align*}
    
    If $\gamma<1$, then near the origin we have
    \begin{equation*}
        u'(t)=\sqrt{K^{2}-\frac{2}{1-\gamma}u(t)^{1-\gamma}}.
    \end{equation*}
    At $t=0$, we have $u(0)=0$, so $u'(0)=K$. Besides, we can deduce that $u$ cannot exceed the value $\ds\Big(\frac{1-\gamma}{2}K^{2}\Big)^{\frac{1}{1-\gamma}}$. After an integration we have
    \begin{equation}\label{eq. inverse of u}
        F(u(T)):=\int_{0}^{u(T)}\frac{ds}{\sqrt{K^{2}-\frac{2}{1-\gamma}s^{1-\gamma}}}=\int_{0}^{T}\frac{u'(t)dt}{\sqrt{K^{2}-\frac{2}{1-\gamma}u(t)^{1-\gamma}}}=T.
    \end{equation}
    This gives us the way to "explicitly" solve the ODE (if we know how to integrate).
    
    When $u(T^{*})$ reaches the maxima $\ds\Big(\frac{1-\gamma}{2}K^{2}\Big)^{\frac{1}{1-\gamma}}$, we just set
    \begin{equation*}
        u(t)=u(2T^{*}-t)\mbox{ for }t\in[T^{*},2T^{*}].
    \end{equation*}
    In particular, $u(2T^{*})=0$ and $u(t)$ cannot be defined beyond $2T^{*}$. Moreover we can even express $T^{*}$ using the integration:
    \begin{equation*}
        T^{*}=\int_{0}^{\Big(\frac{1-\gamma}{2}K^{2}\Big)^{\frac{1}{1-\gamma}}}\frac{ds}{\sqrt{K^{2}-\frac{2}{1-\gamma}s^{1-\gamma}}}.
    \end{equation*}
    As $K\to+\infty$, we see $T^{*}\to\infty$. Therefore we can choose $K$ large enough so that $u$ is solvable in $[0,100]$ for example. In fact, we see for different $K$, the solutions $u$ are "similar to each other" via the invariant scaling $\widetilde{u}(t)=R^{\frac{-2}{1+\gamma}}u(Rt)$.

    When $\gamma>1$, then near the origin we have
    \begin{equation*}
        u'(t)=\sqrt{\frac{2}{\gamma-1}u(t)^{1-\gamma}+C}.
    \end{equation*}
    Similar computation of integration will explicitly express the solutions and we omit the details. It could be seen that near the origin we have the expansion
    \begin{equation*}
        u(t)=\Big(\frac{(1+\gamma)^{2}}{2\gamma-2}\Big)^{\frac{1}{1+\gamma}}t^{\frac{2}{1+\gamma}}+o(t^{\frac{2}{1+\gamma}}).
    \end{equation*}
    
    We would like to stress that in the case $\gamma>1$, the value of $C$ affects the asymptotic behavior of $u$ for large $t$. When $C<0$, then $u$ can only be defined in a finite interval. When $C=0$, then $u$ is defined in $[0,+\infty)$ and is a power function of degree $\frac{2}{1+\gamma}$. When $C>0$, then $u$ is also defined in $[0,+\infty)$, and it is of linear growth as $t\to\infty$ with the asymptotic slope $\sqrt{C}$.

    When $\gamma=1$, then near the origin, we have
    \begin{equation*}
        u'(t)=\sqrt{2\ln{\frac{1}{u(t)}}+C}.
    \end{equation*}
    Similar to the case $\gamma<1$, by noticing that $2\ln{\frac{1}{s}}+C\sim(e^{C/2}-s)$ as $s\to e^{C/2}-$, we see
    \begin{equation*}
        \int_{0}^{e^{C/2}}\frac{ds}{\sqrt{2\ln{\frac{1}{s}}+C}}=:T^{*}<\infty.
    \end{equation*}
    Similar to \eqref{eq. inverse of u}, we have that $u(t)$ is the inverse function of $G(v):=\ds\int_{0}^{v}\frac{ds}{\sqrt{2\ln{\frac{1}{s}}+C}}$ for $t\in[0,T^{*}]$ with $u(T^{*})=e^{C/2}$ and $u'(T^{*})=0$. For $t>T^{*}$,  it satisfies the first order ODE:
    \begin{equation*}
        u'(t)=-\sqrt{2\ln{\frac{1}{u(t)}}+C},\quad\mbox{for } t>T^{*}.
    \end{equation*}
    Then one can verify  that the even extended function $u(t):=u(2T^{*}-t)$ for $t\in[T^*,2T^*]$ solves the ODE $u''=-u^{-1}$. It is defined and supported in $[0,2T^{*}]$.

    The growth rate of $u$ near the origin is of order $t\sqrt{\ln{\frac{1}{t}}}$. It could be obtained from Theorem~\ref{thm. growth rate} (e) and Theorem~\ref{thm. growth rate, (b) improved}, but it could also be obtained from the ODE. In fact, by direct computation we have
    \begin{equation*}
        (t\sqrt{\ln{\frac{1}{t}}})''=-(\frac{1}{2}+\frac{1}{4\ln{\frac{1}{t}}})\frac{1}{t\sqrt{\ln{\frac{1}{t}}}}.
    \end{equation*}
    Therefore, $At\sqrt{\ln{\frac{1}{t}}}$ is a sub-solution to the ODE when $A$ is sufficiently small, and is a super-solution when $A$ is sufficiently large.
\end{example}
Now we consider a rotationally symmetric solution defined in an annulus.
\begin{example}[exterior ball]\label{ex. spherical boundary}
    Let $A_{R,r}=B_{R}\setminus B_{r}$ be an annulus. We require $0<r<R<\infty$. If we impose a rotationally symmetric boundary condition, then by the maximal principle of "SLEF", the solution $u(X)$ must be also rotationally symmetric. Let $t=|X|-r$, then the ODE satisfied by $u(X)=u(t)$ satisfies
    \begin{equation*}
        u''(t)+\frac{n-1}{r+t}u'(t)=-u(t)^{-\gamma}.
    \end{equation*}
    Let's assume that the boundary condition on $\partial B_{r}$ is zero. By Theorem~\ref{thm. growth rate}, we see that $u(t)$ is of growth order $\frac{2}{1+\gamma}$ when $\gamma>1$, while $u(t)$ is of growth order $1$ when $\gamma<1$.
    
    We should notice that when $\gamma<1$, $u'(t)$ is bounded near the origin, and $u(t)^{-\gamma}\lesssim t^{-\gamma}$ by Theorem~\ref{thm. growth rate} (d). Therefore, $u''(t)\sim t^{-\gamma}$ near $t=0$, so $u(t)$ is in fact $C^{1,1-\gamma}$ near $t=0$.
\end{example}
\subsection{Corner regions}
We now consider a two dimensional corner, which could be written as $\{x_{2}\geq k|x_{1}|\}$. The angle of such a corner will then be $\theta=\pi-2\tan^{-1}{k}$.
\begin{example}\label{ex. corner phi value}
    In \cite[Lemma 3.5]{EH23}, Elgindi and Huang constructed a homogeneous solution to $-\Delta u=u^{-\gamma}$ in the first quadrant of $\mathbb{R}^{2}$, in which case $\theta=\frac{\pi}{2}$. When $0<\gamma<1$, they considered an angular ODE in the interval $[0,\frac{\pi}{2}]$, and proved its solvability.

    Based on some discussion with Huang, we realize that for a generic corner with angle $\theta$, the homogeneous solution to $-\Delta u=u^{-\gamma}$ exists if and only if $\theta<\frac{1+\gamma}{2}\pi$. The proof will be the same as that in \cite{EH23}. This coincides with Theorem~\ref{thm. growth rate}.

    In fact, let $\Sigma\in\partial B_{1}$ be an arc with length $\theta$, so that the corner is equal to $Cone_{\Sigma}$, then the cone $Cone_{\Sigma}$ is
    \begin{itemize}
        \item sub-critical, when $\theta<\frac{1+\gamma}{2}\pi$;
        \item critical, when $\theta=\frac{1+\gamma}{2}\pi$;
        \item super-critical, when $\theta>\frac{1+\gamma}{2}\pi$.
    \end{itemize}
\end{example}
Let's then consider a boundary Harnack principle in the first quadrant of $\mathbb{R}^{2}$.
\begin{example}
    In \cite[Theorem B.1]{HZ24}, Huang and the third author of the present paper proved a boundary Harnack principle in the first quadrant
    \begin{equation*}
        \mathbb{R}^{++}:=\{(x,y)\in\mathbb{R}^{2}:x,y>0\}.
    \end{equation*}
    That if $u$ and the homogeneous function $\Psi$ solve the "SLEF" in the first quadrant, and they both vanish at the boundary, then
    \begin{equation*}
        \frac{u}{\Psi}-1=O(|X|^{\epsilon})
    \end{equation*}
    near the origin. (In \cite{EH23}, such a ratio is implicitly known to be $L^{p}$ for an arbitrarily large $p$.) Notice that in \cite{HZ24}, the H\"older exponent $\epsilon$ does not depend on $\|u\|_{L^{\infty}}$, which is better than Theorem~\ref{thm. limit is 1} of the present paper.
\end{example}
Finally for the case of a corner region, we would like to ask a question regarding the optimality issue in Theorem~\ref{thm. growth rate} (b)(e) and Theorem~\ref{thm. growth rate, (b) improved}. It is interesting to see if Theorem~\ref{thm. growth rate, (b) improved} still holds when we remove the solvability condition \eqref{eq. growth rate special condition in critical case}. The authors tend to believe that the answer is "No". Precisely, we consider an angle in $\mathbb{R}^{2}$ and make the following conjecture:

Let $\ds\theta=\frac{1+\gamma}{2}\pi$, and $Cone_{\Sigma}$ be an angle in $\mathbb{R}^{2}$ with  opening $\theta$. We conjecture that the optimal growth rate estimate given in Theorem~\ref{thm. growth rate, (b) improved} holds if and only if $\gamma<2$ (thus $\ds\theta<\frac{3\pi}{2}$).

\subsection{Failure of the ratio to be continuous}
In this subsection, we prove Theorem~\ref{thm. ratio not continuous}. We consider the curve $x_{2}=g(x_{1})$ in $\mathbb{R}^{2}$, where $g(x_{1}):[-1,1]\to\mathbb{R}$ is a piecewise function given by
\begin{equation}\label{eq. strange domain}
    g(x_{1})=\left\{\begin{aligned}
        &|x_{1}-\frac{1}{i}|-R+\sqrt{R^{2}-\frac{1}{i^{2}}}&,&\mbox{ if }|x_{1}-\frac{1}{i}|\leq R-\sqrt{R^{2}-\frac{1}{i^{2}}}\\
        &0&,&\mbox{ otherwise}
    \end{aligned}\right..
\end{equation}
Here,
\begin{equation*}
    i\in\mathbb{Z}\setminus\{0\}=\{\pm1,\pm2,\pm3,\cdots\}.
\end{equation*}
The radius $R$ is large, so that all intervals $\ds\Big\{|x_{1}-\frac{1}{i}|\leq R-\sqrt{R^{2}-\frac{1}{i^{2}}}\Big\}$ do not overlap.

Let's roughly describe the geometry of such a curve. The curve is bounded between a straight line $\mathcal{C}_{1}=\{x_{2}=0\}$ and a circular arc $\mathcal{C}_{2}=\{x_{2}=-R+\sqrt{R^{2}-x_{1}^{2}}\}$, which is a part of the circle $\{x_{1}^{2}+(x_{2}+R)^{2}=R^{2}\}$. The two curves $\mathcal{C}_{1}$ and $\mathcal{C}_{2}$ are tangent to each other at the origin. Besides, we realize that at each point $(i^{-1},g(i^{-1}))$, the curve $\{x_{2}=g(x_{1})\}$ has a $\frac{\pi}{2}$ angle (see Figure \ref{fig-g}).

\begin{figure}
    \centering
\includegraphics[width=1\linewidth]{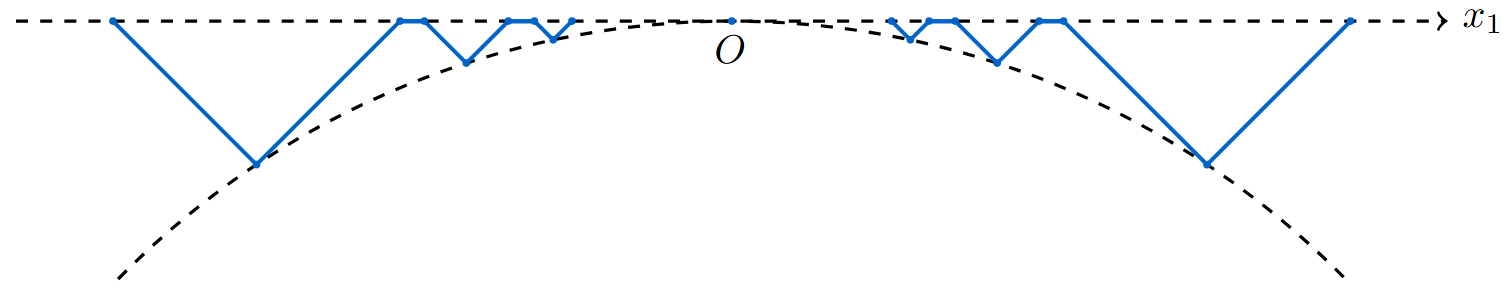}
    \caption{Graph of $g$.}
    \label{fig-g}
\end{figure}

We will fix a $\gamma\in(0,1)$. The goal in this subsection is to show that above such a curve
\begin{equation*}
    \Gamma=\{x_{2}=g(x_{1})\},
\end{equation*}
there are two solutions $u$, $v$ to the "SLEF" such that they both vanish at $\Gamma$ but their ratio $u/v$ is not continuous at the origin.

The key trick is the following. As $\gamma>0$, the limiting cone at $X=\ds(i^{-1},g(i^{-1}))$ is an angle of size $\frac{\pi}{2}$, which means that $Cone_{\Sigma}$ is sub-critical (recall Example~\ref{ex. corner phi value}) for
\begin{equation*}
    \Sigma=\{(\cos{\theta},\sin{\theta}):\frac{\pi}{4}<\theta<\frac{3\pi}{4}\}.
\end{equation*}
Then no matter how $u$ and $v$ are constructed, we always have (using Theorem~\ref{thm. limit is 1} or \cite[Theorem B.1]{HZ24}) that
\begin{equation*}
    \lim_{X\to(i^{-1},g(i^{-1}))}\frac{u(X)}{v(X)}=1.
\end{equation*}
Since $(i^{-1},g(i^{-1}))$ tends to the origin as $i\to\pm\infty$, we should expect $\frac{u}{v}$ to be $1$ at the origin, if it were continuous. Therefore, our strategy is to show that
\begin{equation*}
    \limsup_{X=(0,t),\ t\to0}\frac{u(X)}{v(X)}\geq2
\end{equation*}
for some intentionally chosen $u$ and $v$.
\begin{proof}[Proof of Theorem~\ref{thm. ratio not continuous}]
    From the discussion above, the idea is then to introduce two auxiliary functions $\varphi$ and $\psi$, so that they both vanish at the origin and
    \begin{equation*}
        \frac{\partial_{x_{2}}\varphi(0)}{\partial_{x_{2}}\psi(0)}\geq2.
    \end{equation*}
    We then use $\varphi$ and $\psi$ as the boundary value to construct two solutions $u\geq\varphi$ and $v\leq\psi$ to the "SLEF". Then $\ds\frac{u}{v}$ is not continuous at the origin.

    To ensure $u\geq\varphi$, we let $\varphi(X)=\varphi(x_{2})$ be a continuous function defined near $0$, such that
    \begin{align*}
        &-\varphi(x_{2})''=\varphi(x_{2})^{-\gamma}\mbox{ for }x_{2}\geq0,\\
        &\varphi'(0)=2k\mbox{ as the right-side derivate},
    \end{align*}
    and $\varphi(x_{2})=0$ for $x_{2}\leq0$. We then let $u(X)$ satisfy
    \begin{equation*}
        \left\{\begin{aligned}
            &-\Delta u(X)=u(X)^{-\gamma}&\mbox{ in }&\{|x_{1}|\leq1,g(x_{1})\leq x_{2}\leq1\}\\
            &u(Y)=\varphi(Y)&\mbox{ on }&\partial\{|x_{1}|\leq1,g(x_{1})\leq x_{2}\leq1\}
        \end{aligned}\right..
    \end{equation*}
    As $u>0=\varphi$ on the line $\{x_{2}=0\}$, we see
    \begin{equation*}
        u\geq\varphi\mbox{ in }\{|x_{1}|\leq1,g(x_{1})\leq x_{2}\leq1\}.
    \end{equation*}

    Similarly, to ensure $v\leq\psi$, we let $\psi(X)=\psi(d)$ where $d$ is the signed distance to the circle $\{x_{1}^{2}+(x_{2}+R)^{2}=R^{2}\}$. We let $\psi(0)=0$, $\psi'(0)=k$ and
    \begin{equation*}
        \psi''(d)+\frac{1}{R+d}\psi'(d)=-\psi(d)^{-\gamma}.
    \end{equation*}
    Notice that we have
    \begin{equation*}
        \ds\frac{\partial_{x_{2}}\varphi(0)}{\partial_{x_{2}}\psi(0)}=\frac{2k}{k}=2.
    \end{equation*}
    Moreover, Example~\ref{ex. straight boundary} and Example~\ref{ex. spherical boundary} guarantee the existence of $\varphi$ and $\psi$, as well as the slope $k>0$.
    
    We now let $v(X)$ satisfy
    \begin{equation*}
        \left\{\begin{aligned}
            &-\Delta v(X)=v(X)^{-\gamma}&\mbox{ in }&\{|x_{1}|\leq1,g(x_{1})\leq x_{2}\leq1\}\\
            &v(Y)=0&\mbox{ on }&\Gamma=\{x_{2}=g(x_{1})\}\\
            &v(Y)=\psi(Y)&\mbox{ on }&\partial\{|x_{1}|\leq1,g(x_{1})\leq x_{2}\leq1\}\setminus\Gamma
        \end{aligned}\right..
    \end{equation*}
    As the boundary value is continuous on $\partial\{|x_{1}|\leq1,g(x_{1})\leq x_{2}\leq1\}$, $v(X)$ must exist. As $v=0\leq\psi$ on $\Gamma$, we see
    \begin{equation*}
        v\leq\psi\mbox{ in }\{|x_{1}|\leq1,g(x_{1})\leq x_{2}\leq1\}.
    \end{equation*}
\end{proof}
\begin{remark}
    One should notice that $x_{2}=g(x_{1})$ constructed above is not a convex graph. It is exactly the example above that inspired the authors to study the continuity of $\frac{u}{v}$ when $\Gamma$ has a convex boundary, as was presented in Theorem~\ref{thm. limit is continuous when obtuse}.
\end{remark}
Finally, we would like to mention that despite its strange shape, our construction of the domain in \eqref{eq. strange domain} can greatly simplify our computation when proving Theorem~\ref{thm. ratio not continuous}. We would like to ask if there is an alternative proof of Theorem~\ref{thm. ratio not continuous} when the boundary $\Gamma$ is simpler. For example, when $u$ and $v$ satisfy
\begin{equation*}
    -\Delta u=u^{-\gamma},\ -\Delta v=v^{-\gamma},\ \mbox{with }1<\gamma<2,
\end{equation*}
in $\Omega=\{x_{2}>-|x_{1}|\}\cap B_{1}$, and both vanish at $\Gamma=\{x_{2}=-|x_{1}|\}$. We guess that the ratio $\frac{u}{v}$ can also be non-continuous near the origin.

\noindent \textbf{Acknowledgment.}
The authors  sincerely thank the editor and  the reviewers for their careful reading and insightful comments,  which have clarified ambiguities, corrected errors, and improved the clarity of this paper. They are also grateful to Yupei Huang and Ovidiu Savin for their helpful suggestions. 

Guo is partially supported by the NSFC-12501145,  the Natural Science Foundation of Shanghai (No. 25ZR1402207), the Postdoctoral Fellowship Program of CPSF (No. GZC20252004), and  the China Postdoctoral Science Foundation (No. 2025T180838 and 2025M773061). 
Li is partially supported by NSFC-W2531006, NSFC-12250710674, NSFC-12031012 and the Institute of Modern Analysis-A Frontier Research Center of Shanghai. Zhang is partially supported by NSFC-12526202, and NSFC-12141105.

\noindent \textbf{Conflict of interest.} The authors do not have any possible conflicts of interest.

\noindent \textbf{Data availability statement.}
Data sharing is not applicable to this article, as no data sets were generated or analyzed during the current study.


\begin{thebibliography}{10}

\bibitem{AcShpe}
A.~Acrivos, M.~J. Shah, and E.~E. Petersen.
\newblock On the flow of non-{N}ewtonian liquid on a rotating disk.
\newblock {\em J. Appl. Phys.}, 31:963--968, 1960.

\bibitem{AS19}
M.~Allen and H.~Shahgholian.
\newblock A new boundary {H}arnack principle (equations with right hand side).
\newblock {\em Arch. Ration. Mech. Anal.}, 234(3):1413--1444, 2019.

\bibitem{ACF92}
L.~Andersson, P.T. Chru\'sciel, and H.~Friedrich.
\newblock On the regularity of solutions to the {Y}amabe equation and the existence of smooth hyperboloidal initial data for {E}instein's field equations.
\newblock {\em Comm. Math. Phys.}, 149(3):587--612, 1992.

\bibitem{BB91}
R.F. Bass and K.~Burdzy.
\newblock A boundary {H}arnack principle in twisted {H}\"{o}lder domains.
\newblock {\em Ann. of Math. (2)}, 134(2):253--276, 1991.

\bibitem{BB94}
R.F. Bass and K.~Burdzy.
\newblock The boundary {H}arnack principle for nondivergence form elliptic operators.
\newblock {\em J. London Math. Soc. (2)}, 50(1):157--169, 1994.

\bibitem{BoOr}
L.~Boccardo and L.~Orsina.
\newblock Semilinear elliptic equations with singular nonlinearities.
\newblock {\em Calc. Var. Partial Differential Equations}, 37(3-4):363--380, 2010.

\bibitem{CFRS20}
X.~Cabre, A.~Figalli, X.~Ros-Oton, and J.~Serra.
\newblock Stable solutions to semilinear elliptic equations are smooth up to dimension $9$.
\newblock {\em Acta Mathematica}, 224:187--252, 01 2020.

\bibitem{CFMS81}
L.~Caffarelli, E.~Fabes, S.~Mortola, and S.~Salsa.
\newblock Boundary behavior of nonnegative solutions of elliptic operators in divergence form.
\newblock {\em Indiana Univ. Math. J.}, 30(4):621--640, 1981.

\bibitem{CGS89}
L.~Caffarelli, B.~Gidas, and J.~Spruck.
\newblock Asymptotic symmetry and local behavior of semilinear elliptic equations with critical {S}obolev growth.
\newblock {\em Communications on Pure and Applied Mathematics}, 42:271--297, 1989.

\bibitem{CLM20}
W.~Chen, Y.~Li, and P.~Ma.
\newblock {\em The fractional {L}aplacian}.
\newblock World Scientific Publishing Co. Pte. Ltd., Hackensack, NJ, [2020] \copyright 2020.

\bibitem{CrRaTa}
M.G. Crandall, P.H. Rabinowitz, and L.~Tartar.
\newblock On a {D}irichlet problem with a singular nonlinearity.
\newblock {\em Comm. Partial Differential Equations}, 2(2):193--222, 1977.

\bibitem{DS20}
D.~De~Silva and O.~Savin.
\newblock A short proof of boundary {H}arnack principle.
\newblock {\em J. Differential Equations}, 269(3):2419--2429, 2020.

\bibitem{DS25}
D.~De~Silva and O.~Savin.
\newblock On the boundary {H}arnack principle for operators with different lower order terms, 2025.

\bibitem{dP}
M.A. del Pino.
\newblock A global estimate for the gradient in a singular elliptic boundary value problem.
\newblock {\em Proc. Roy. Soc. Edinburgh Sect. A}, 122(3-4):341--352, 1992.

\bibitem{DJV24}
H.~Dong, S.~Jeon, and S.~Vita.
\newblock Schauder type estimates for degenerate or singular elliptic equations with {DMO} coefficients.
\newblock {\em Calculus of Variations and Partial Differential Equations}, 63, 11 2024.

\bibitem{EH23}
T.M. Elgindi and Y.~Huang.
\newblock Regular and singular steady states of the 2{D} incompressible {E}uler equations near the {B}ahouri-{C}hemin patch.
\newblock {\em Arch. Ration. Mech. Anal.}, 249(1):Paper No. 2, 31, 2025.

\bibitem{FGMS88}
E.~Fabes, N.~Garofalo, S.~Mar\'{\i}n-Malave, and S.~Salsa.
\newblock Fatou theorems for some nonlinear elliptic equations.
\newblock {\em Rev. Mat. Iberoamericana}, 4(2):227--251, 1988.

\bibitem{Fo}
R.H. Fowler.
\newblock The solution of {E}mden's and similar differential equations.
\newblock {\em Monthly Notices Roy. Astro. Soc.}, 91:63--91, 1930.

\bibitem{FuMa}
W.~Fulks and J.~S. Maybee.
\newblock A singular non-linear equation.
\newblock {\em Osaka Math. J.}, 12:1--19, 1960.

\bibitem{GT01}
D.~Gilbarg and N.S. Trudinger.
\newblock {\em Elliptic partial differential equations of second order}.
\newblock Classics in Mathematics. Springer-Verlag, Berlin, 2001.
\newblock Reprint of the 1998 edition.

\bibitem{GuLi}
C.~Gui and F.~Lin.
\newblock Regularity of an elliptic problem with a singular nonlinearity.
\newblock {\em Proc. Roy. Soc. Edinburgh Sect. A}, 123(6):1021--1029, 1993.

\bibitem{GLO26}
Y.~Guo, C.~Li, and Y.~Ouyang.
\newblock Boundary regularity and a priori estimates for fractional equations on unbounded domains, 2026, arXiv: 2511.17325.

\bibitem{GuoWu}
Y.~Guo and L.~Wu.
\newblock Classification of solutions to a singular fractional problem in the half space, 2025, preprint.

\bibitem{HJS24}
Q.~Han, X.~Jiang, and W.~Shen.
\newblock The {L}oewner-{N}irenberg problem in cones.
\newblock {\em J. Funct. Anal.}, 287(8):Paper No. 110566, 54, 2024.

\bibitem{HLL21}
Q.~Han, X.~Li, and Y.~Li.
\newblock Asymptotic expansions of solutions of the {Y}amabe equation and the $\sigma_{k}$-{Y}amabe equation near isolated singular points.
\newblock {\em Communications on Pure and Applied Mathematics}, 74(9):1915--1970, 2021.

\bibitem{HS18}
Q.~Han and W.~Shen.
\newblock Boundary behaviors for {L}iouville's equation in planar singular domains.
\newblock {\em J. Funct. Anal.}, 274(6):1790--1824, 2018.

\bibitem{HS20}
Q.~Han and W.~Shen.
\newblock The {L}oewner-{N}irenberg problem in singular domains.
\newblock {\em J. Funct. Anal.}, 279(6):108604, 43, 2020.

\bibitem{HZ24}
Y.~Huang and C.~Zhang.
\newblock Optimal {H}\"older convergence of a class of singular steady states to the {B}ahouri-{C}hemin patch.
\newblock {\em Calc. Var. Partial Differential Equations}, 64(6):Paper No. 191, 31, 2025.

\bibitem{JK82}
D.S. Jerison and C.E. Kenig.
\newblock Boundary behavior of harmonic functions in nontangentially accessible domains.
\newblock {\em Adv. in Math.}, 46(1):80--147, 1982.

\bibitem{J21}
X.~Jiang.
\newblock Boundary expansion for the {L}oewner-{N}irenberg problem in domains with conic singularities.
\newblock {\em J. Funct. Anal.}, 281(7):Paper No. 109122, 41, 2021.

\bibitem{KS18}
N.~Kamburov and B.~Sirakov.
\newblock Uniform a priori estimates for positive solutions of the {L}ane-{E}mden equation in the plane.
\newblock {\em Calc. Var. Partial Differential Equations}, 57(6):Paper No. 164, 8, 2018.

\bibitem{K72}
J.T. Kemper.
\newblock A boundary {H}arnack principle for {L}ipschitz domains and the principle of positive singularities.
\newblock {\em Comm. Pure Appl. Math.}, 25:247--255, 1972.

\bibitem{K05}
S.~Kichenassamy.
\newblock Boundary behavior in the {L}oewner-{N}irenberg problem.
\newblock {\em J. Funct. Anal.}, 222(1):98--113, 2005.

\bibitem{KMPS99}
N.~Korevaar, R.~Mazzeo, F.~Pacard, and R.~Schoen.
\newblock Refined asymptotics for constant scalar curvature metrics with isolated singularities.
\newblock {\em Inventiones mathematicae}, 135:233--272, 01 1999.

\bibitem{K83}
N.V. Krylov.
\newblock Boundedly inhomogeneous elliptic and parabolic equations in a domain.
\newblock {\em Izv. Akad. Nauk SSSR Ser. Mat.}, 47(1):75--108, 1983.

\bibitem{LaMc}
A.C. Lazer and P.J. McKenna.
\newblock On a singular nonlinear elliptic boundary-value problem.
\newblock {\em Proc. Amer. Math. Soc.}, 111(3):721--730, 1991.

\bibitem{L96}
C.~Li.
\newblock Local asymptotic symmetry of singular solutions to nonlinear elliptic equations.
\newblock {\em Inventiones mathematicae}, 123:221--231, 1996.

\bibitem{LN74}
C.~Loewner and L.~Nirenberg.
\newblock Partial differential equations invariant under conformal or projective transformations.
\newblock In {\em Contributions to analysis (a collection of papers dedicated to {L}ipman {B}ers)}, pages 245--272. Academic Press, New York-London, 1974.

\bibitem{M91}
R.~Mazzeo.
\newblock Regularity for the singular {Y}amabe problem.
\newblock {\em Indiana Univ. Math. J.}, 40(4):1277--1299, 1991.

\bibitem{MMS24a}
L.~Montoro, L.~Muglia, and B.~Sciunzi.
\newblock Classification of solutions to {$-\Delta u=u^{-\gamma}$} in the half-space.
\newblock {\em Math. Ann.}, 389(3):3163--3179, 2024.

\bibitem{MMS24b}
L.~Montoro, L.~Muglia, and B.~Sciunzi.
\newblock The classification of all weak solutions to {$-\Delta u=u^{-\gamma}$} in the half space.
\newblock {\em SIAM Journal on Mathematical Analysis}, 57:5080--5088, 09 2025.

\bibitem{Naca}
A.~Nachman and A.~Callegari.
\newblock A nonlinear singular boundary value problem in the theory of pseudoplastic fluids.
\newblock {\em SIAM J. Appl. Math.}, 38(2):275--281, 1980.

\bibitem{No}
P.~Nowosad.
\newblock On the integral equation {$\kappa f=1/f$} arising in a problem in communication.
\newblock {\em J. Math. Anal. Appl.}, 14:484--492, 1966.

\bibitem{OlPe}
F.~Oliva and F.~Petitta.
\newblock Finite and infinite energy solutions of singular elliptic problems: existence and uniqueness.
\newblock {\em J. Differential Equations}, 264(1):311--340, 2018.

\bibitem{OP24}
F.~Oliva and F.~Petitta.
\newblock Singular elliptic {PDE}s: an extensive overview.
\newblock {\em Partial Differ. Equ. Appl.}, 6(1):Paper No. 6, 82, 2025.

\bibitem{Pe}
W.L. Perry.
\newblock A monotone iterative technique for solution of {$p$}th order {$(p<0)$} reaction-diffusion problems in permeable catalysis.
\newblock {\em J. Comput. Chem.}, 5(4):353--357, 1984.

\bibitem{QS19}
P.~Quittner and P.~Souplet.
\newblock {\em Superlinear parabolic problems}.
\newblock Birkh\"auser Advanced Texts: Basler Lehrb\"ucher. [Birkh\"auser Advanced Texts: Basel Textbooks]. Birkh\"auser/Springer, Cham, second edition, 2019.
\newblock Blow-up, global existence and steady states.

\bibitem{RW94}
X.~Ren and J.~Wei.
\newblock On a two-dimensional elliptic problem with large exponent in nonlinearity.
\newblock {\em Trans. Amer. Math. Soc.}, 343(2):749--763, 1994.

\bibitem{RSS23}
F.~Rend\'on, B.~Sirakov, and M.~Soares.
\newblock Boundary weak {H}arnack estimates and regularity for elliptic {PDE} in divergence form.
\newblock {\em Nonlinear Analysis}, 235:113331, 2023.

\bibitem{RT21}
X.~Ros-Oton and C.~Torres-Latorre.
\newblock New boundary {H}arnack inequalities with right hand side.
\newblock {\em J. Differential Equations}, 288:204--249, 2021.

\bibitem{Stu}
C.A. Stuart.
\newblock Existence and approximation of solutions of non-linear elliptic equations.
\newblock {\em Math. Z.}, 147(1):53--63, 1976.

\bibitem{VasoMo}
K.~Vajravelu, E.~Soewono, and R.~N. Mohapatra.
\newblock On solutions of some singular, nonlinear differential equations arising in boundary layer theory.
\newblock {\em J. Math. Anal. Appl.}, 155(2):499--512, 1991.

\end{thebibliography}
\end{document}